\title{The probability of generating finite and profinite groups}
\author{Scott Harper \& Martyn Quick}
\date{}
\setlist[enumerate]{label={\normalfont(\roman*)}}
\numberwithin{equation}{section}
\newtheorem{theorem}{Theorem}[section]
\newtheorem{corollary}[theorem]{Corollary}
\newtheorem{lemma}[theorem]{Lemma}
\newtheorem{proposition}[theorem]{Proposition}
\newtheorem{thm}{Theorem}
\newtheorem{cor}[thm]{Corollary}
\newtheorem{remark}[theorem]{Remark}
\newtheorem{example}[theorem]{Example}
\newtheorem{rem}[thm]{Remark}
\newtheorem{ex}[thm]{Example}
\theoremstyle{nonumberplain}
\newtheoremstyle{proofstyle}%
  {\item[\theorem@headerfont\hskip\labelsep ##1\theorem@separator]}%
  {\item[\theorem@headerfont\hskip\labelsep ##3\theorem@separator]}
\theoremstyle{proofstyle}
\newtheorem{proof}{Proof:}
\newcommand{\AND}{\qquad\text{and}\qquad}
\newcommand{\im}{\operatorname{im}}
\newcommand{\<}{\langle}
\renewcommand{\>}{\rangle}
\renewcommand{\leq}{\leqslant}
\renewcommand{\geq}{\geqslant}
\renewcommand{\nleq}{\nleqslant}
\newcommand{\normal}{\trianglelefteqslant}
\newcommand{\padics}[1][p]{\mathbb{Z}_{#1}}
\newcommand{\semidirect}{\mathbin{\vcentcolon}}
\newcommand{\nbd}{\nobreakdash-}
\newcommand{\set}[2]{\{\,#1\mid#2\,\}}
\DeclarePairedDelimiter{\order}{\lvert}{\rvert}
\let\wreath\wr
\renewcommand{\wr}[1][]{\mathbin{\wreath_{#1}}}
\newcommand{\Aut}[1]{\operatorname{Aut}#1}
\newcommand{\Cent}[2]{C_{#1}(#2)}
\newcommand{\Centre}[1]{Z(#1)}
\newcommand{\Cohom}[1]{H^{1}(#1)}
\newcommand{\Frat}[1]{\Phi(#1)}
\newcommand{\half}{\tfrac{1}{2}}
\newcommand{\Inn}[1]{\operatorname{Inn}#1}
\newcommand{\Norm}[2]{N_{#1}(#2)}
\newcommand{\Op}[2][p]{O_{#1}(#2)}
\newcommand{\Out}[1]{\operatorname{Out}#1}
\newcommand{\Sym}[1]{\operatorname{Sym}(#1)}
\newcommand{\wAut}[1]{\operatorname{\widetilde{Aut}}#1}
\renewcommand{\epsilon}{\varepsilon}
\newcommand{\C}{\mathscr{C}}
\newcommand{\D}{\mathscr{D}}
\newcommand{\K}{\mathcal{K}}
\newcommand{\M}{\mathscr{M}}
\newcommand{\T}{\mathscr{T}}
\newcommand{\GL}{\mathrm{GL}}
\newcommand{\SL}{\mathrm{SL}}
\newcommand{\PGL}{\mathrm{PGL}}
\newcommand{\PSL}{\mathrm{PSL}}
\newcommand{\PSU}{\mathrm{PSU}}
\newcommand{\PSp}{\mathrm{PSp}}
\newcommand{\POm}{\mathrm{P}\Omega}
\newcommand{\F}{\mathbb{F}}
\newcommand{\FF}{\overline{\mathbb{F}}}
\begin{document}

\maketitle
\def\abstractname{\vspace{-2\baselineskip}}

\begin{abstract}
\noindent
Famously, every finite simple group~$G$ can be generated by a pair of elements.
Moreover, Liebeck and Shalev (1995) proved that the probability that a pair of elements generate~$G$ tends to $1$ as $|G| \to \infty$.
In this paper, we generalize this theorem of Liebeck and Shalev.
Work of Lucchini and Menegazzo (1997) implies that a finite group~$G$ can be generated by a pair of elements if it has a unique chief series.
As a consequence of our main theorem, the probability that a pair of elements generate such a group~$G$ tends to~$1$ as $|S| \to \infty$, where $S$ is the unique simple quotient of~$G$.
We also prove that a profinite group~$G$ with finitely many chief series has a topological generating set of size~$d < \infty$, and for any such~$d$, the probability that a $d$\nbd tuple of elements topologically generates~$G$ is positive; moreover, we can take $d = 2$ if $G$ has a unique chief series.
Along the way, we show that the chief factors of a finite group with a unique chief series are highly constrained, and we also analyze the maximal subgroup zeta function of a finite group with a unique minimal normal subgroup.
\end{abstract}

\section{Introduction}
\label{s:intro}

Group generation is a subject with a long history, beginning with some very elementary questions.
For instance, the finite symmetric and alternating groups are well-known to be generated by just two elements, but, as early 1882, Netto \cite{Netto82} wrote 
\begin{quote}
If we arbitrarily select two or more [permutations] of $n$ elements, it is to be regarded as extremely probable that the group of lowest order which contains these is the symmetric group, or at least the alternating group.
\end{quote}
Netto's Conjecture was proved in a precise sense by Dixon in 1969 \cite{Dixon69}, who replaced it with an even bolder conjecture.
As a consequence of the Classification of Finite Simple Groups, we know that every finite simple group can be generated by a pair of elements \cite{Steinberg62, AschbacherGuralnick84}, but even before the Classification was complete, Dixon conjectured that almost all pairs of elements of a finite simple group generate the whole group, generalizing the phenomenon seen for alternating groups.
Liebeck and Shalev \cite{LiebeckShalev95} (building on earlier work of Kantor and Lubotzky \cite{KantorLubotzky90}) proved Dixon's Conjecture.
To state their result precisely, for a finite group~$G$ and a positive integer~$d$, let $P_d(G)$ be the probability that a uniform random $d$\nbd tuple $(g_1, \dots, g_d) \in G^d$ satisfies $\< g_1, \dots, g_d \> = G$.
With this notation, Liebeck and Shalev's theorem states that if $G$~is a finite simple group, then $P_2(G) \to 1$ as $\order{G} \to \infty$.
(For more background on this subject, see Burness' survey \cite{Burness19}.)

The aim of this paper is to generalize this landmark result on probabilistic generation to a much broader class of groups.
We call a finite group \emph{uniserial} if it has a unique chief series. 
Clearly, all finite simple groups are uniserial.
As a consequence of a result of Lucchini and Menegazzo \cite[Theorem~1.1]{LucchiniMenegazzo97}, every uniserial group can be generated by a pair of elements (see Corollary~\ref{cor:lucchini_menegazzo}). 
In this paper, we establish the corresponding probabilistic result for these groups.

\begin{thm}
\label{thm:uniserial}
For all $\epsilon > 0$, there exists $c > 0$ such that if $G$~is a finite uniserial group and $N$~is a normal subgroup with $\order{G/N} \geq c$, then
\[
P_2(G) \geq (1 - \epsilon) \, P_2(G/N).
\]
\end{thm}

\begin{cor}
\label{cor:uniserial}
Let $G$ be a finite nontrivial uniserial group, and let $S$ be the unique simple quotient of~$G$.
Then $P_2(G) \to 1$ as $\order{S} \to \infty$.
\end{cor}

\clearpage

\begin{rem}
\label{rem:uniserial}
The generality of Theorem~\ref{thm:uniserial} means we can consider sequences of uniserial groups where the order of unique simple quotient does not tend to infinity.
For example, Dixon \cite{Dixon69} proved that $P_2(S_n) \to 3/4$ as $n \to \infty$, so Theorem~\ref{thm:uniserial} implies that if $G$ is a uniserial group with $S_n$ as a quotient, then $P_2(G) \to 3/4$ as $n \to \infty$ (see Corollary~\ref{cor:limit} for a general statement).
\end{rem}

The class of uniserial groups is very broad. 
For example, it contains all the groups that can be obtained by iteratively applying the two constructions in the following example.

\begin{ex}
\label{ex:main}
Let $G$ be a uniserial finite group.
\begin{enumerate}
\item 
Let $p$ be prime, and let $V$ be a finite-dimensional $\F_pG$-module.
If $V$ is a uniserial module (i.e., if $V$ has a unique composition series as an $\F_pG$-module), then the affine group $V\semidirect G$ is uniserial provided that $G$~acts faithfully on the quotient of~$V$ by its maximal submodule (see Lemma~\ref{lem:affine_construction}).
In particular, $V\semidirect G$ is uniserial if $V$ is a faithful irreducible $\F_{p}G$\nbd module.

\item 
Let $T$ be a finite group, and let $G$ act faithfully and transitively on a finite set~$\Omega$.
If $T$ is a nonabelian simple group, then the wreath product $T \wr_\Omega G$ is uniserial (see Lemma~\ref{lem:wreath_construction}).
In particular, iterated wreath products of finite nonabelian simple groups in faithful transitive actions are uniserial, so Theorem~\ref{thm:uniserial} generalizes \cite[Theorem~A]{Quick06} (which generalizes the main results of \cite{Bhattacharjee94} and \cite{Quick04}).
In general, $T \wr_\Omega G$ may or may not be uniserial.
For example, with respect to the natural action of $S_n$ on $n \geq 5$ points, $C_p \wr S_n$ is not uniserial, but if $d$ is a prime divisor of $q - 1$, then $\SL_d(q) \wr S_n$ is uniserial (see Examples~\ref{ex:affine_symmetric} and~\ref{ex:wreath_quasisimple}).
\end{enumerate}
\end{ex}

Theorem~\ref{thm:uniserial} is a consequence of a more general result, as we now explain.
Let $G$ be a finite group.
We say that $G$ has a \emph{prescribed chief tail} $\T = (N_0, \dots, N_\ell)$ if every chief series $G = G_0 > \cdots > G_n = 1$ satisfies $G_{\ell - i} = N_{n - i}$ for all $i \in \{0, \dots, \ell\}$, or, said otherwise, every chief series of~$G$ ends with $\T$.
Observe that $(1)$ is a prescribed chief tail of any finite group, $(N, 1)$ is a prescribed chief tail of $G$ if and only if $N$ is the unique minimal normal subgroup of~$G$ and $G$ has a prescribed chief tail $(N_0, \dots, N_\ell)$ satisfying $N_0 = G$ if and only if $G$ is uniserial.
We can now state the main theorem.

\begin{thm}
\label{thm:main}
For all $\epsilon > 0$, there exists $c > 0$ such that if $G$ is a finite group with a prescribed chief tail~$\T$, $N \in \T$ satisfies $\order{G/N} \geq c$ and $d \geq 2$, then
\[
P_d(G) \geq (1 - \epsilon) \, P_d(G/N).
\]
\end{thm}

\begin{rem}
\label{rem:main}
In Theorem~\ref{thm:main}, the hypothesis that $d \geq 2$ is necessary
since one can easily construct groups~$G$ with a unique minimal normal
subgroup~$N$ with $\order{G/N}$ arbitrarily large while still having
$P_{1}(G) = 0$ and $P_{1}(G/N) > 0$.
One such example is $G = A_5 \wr C_n$ and $N = A_5^n$ for all $n \geq 1$.
\end{rem}

We now turn to profinite groups and begin by extending some notation from finite groups.
For a profinite group~$G$, let $P_d(G)$ be the probability (with respect to the normalized Haar measure on~$G$) that a random $d$\nbd tuple $(g_1, \dots, g_d) \in G^d$ topologically generates~$G$ (i.e., $\< g_1, \dots, g_d \>$ is dense in~$G$), and let $d(G)$ be the smallest~$d$ such that $G$ has a topological generating $d$\nbd tuple.
Finally, a \emph{chief series} of a profinite group~$G$ is an unrefinable series $G = G_0 > G_1 > \cdots$ of open normal subgroups of~$G$.
We can now give our application of Theorem~\ref{thm:main} to profinite groups.

\begin{thm}
\label{thm:profinite}
Let $G$~be a profinite group with finitely many chief series.
Then $d(G)$ is finite and $P_d(G) > 0$ for all $d \geq d(G)$.
Moreover, if $G$ has a unique chief series, then $d(G) \leq 2$, so $P_2(G) > 0$.
\end{thm}

Theorem~\ref{thm:profinite} implies the weaker statement that a profinite group with finitely many chief series is positively finitely generated (PFG), which could alternatively be deduced from the characterization of PFG groups in \cite[Theorem~4]{JaikinZapirainPyber11}.

An important ingredient in our proof is a generalization of the maximal subgroup zeta function introduced by Liebeck and Shalev in \cite{LiebeckShalev96}.
Let $G$ be a finite group.
For a normal subgroup~$N$, write $\M(G, N)$ for the set of maximal subgroups of~$G$ that do not contain~$N$.
For each real number~$s > 1$, write
\[
\zeta_{G, N}(s) = \sum_{M \in \M(G, N)} \order{G : M}^{-s} = \sum_n \frac{m_n(G, N)}{n^s},
\]
where $m_n(G, N)$ is the number of maximal subgroups $M \in \M(G, N)$ of index~$n$.
If $N = G$, then $\M(G, N)$ is just the set of maximal subgroups of $G$, so $\zeta_{G, G}(s)$ is just the zeta function $\zeta_{G}(s)$ introduced by Liebeck and Shalev in \cite{LiebeckShalev96} and studied by Liebeck, Martin and Shalev \cite{LiebeckMartinShalev05}, who proved that if $s > 1$, then  $\zeta_G(s) \to 0$ as $\order{G} \to \infty$ when $G$ is simple.

By work of Gasch\"utz \cite{Gaschutz59} (see Lemma~\ref{lem:prob}), for all normal subgroups~$N$ of~$G$ and positive integers $d \geq d(G)$,
\[
\frac{P_d(G)}{P_d(G/N)} \geq 1 - \zeta_{G, N}(d),
\]
where $P_d(G)/P_d(G/N)$ can be interpreted as the conditional probability $P_d(G, N)$ that a $d$\nbd tuple of elements of~$G$ generate $G$ given that their images in $G/N$ generate $G/N$.

We now outline the proof of Theorem~\ref{thm:main}.
Let $G$ be a finite group with a prescribed chief tail $(N_0, \dots, N_\ell)$ and let $N = N_t \in \T$.
For $d \geq 2$ such that $G$ is $d$\nbd generated,
\begin{equation}
\label{eq:main}
\frac{P_d(G)}{P_d(G/N)} = \prod_{i = t+1}^{\ell} \frac{P_d(G/N_{i})}{P_d(G/N_{i-1})} \geq \prod_{i = t+1}^{\ell} \bigl(1 - \zeta_{G/N_i, N_{i-1}/N_i}(d) \bigr).
\tag{$\star$}
\end{equation}
This leads to the two main main aspects of our proof.

\paragraph{Maximal subgroup zeta function}

Let $G$ be a finite group with a unique minimal normal subgroup $N$ and consider the zeta function~$\zeta_{G,N}(s)$.
We do this since $N_{i-1}/N_{i}$ is the unique minimal normal subgroup of~$G/N_{i}$ in each term of \eqref{eq:main}.

When $s = 1$, the value $\zeta_{G, N}(1)$ is nothing other than the number of conjugacy classes of maximal subgroups of~$G$ that do not contain~$N$ (see Lemma~\ref{lem:zeta_classes}).
To describe $\zeta_{G, N}(s)$ when $s > 1$, we refer to a real-valued function~$\alpha$ defined on the set of finite simple groups. 
This function is defined in Section~\ref{ss:p_function}, and for now we just note that there is an absolute constant~$c > 0$ such that $\alpha(T) \geq c$ for all~$T$ and $\alpha(T) \to \infty$ as $\order{T} \to \infty$.
When $T$ is a cyclic group of prime order, $\alpha(T) = \order{T}$.

\begin{thm}
\label{thm:zeta}
Let $G$ be a finite group with a unique minimal normal subgroup~$N$.
Write $N = T^n$ where $T$ is simple.
Let $d \geq 2$ be an integer.
Then 
\[
\zeta_{G, N}(d) < \alpha(T)^{-n(d-\iota)},
\]
where $\iota = \frac{3}{2}$ if $T$ is abelian and $\iota = 1$ otherwise.
In particular, $\zeta_{G, N}(d) \to 0$ as $\order{N} \to \infty$.
\end{thm}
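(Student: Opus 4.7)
The strategy is to exploit the following observation: for any $M \in \M(G,N)$, maximality forces $MN = G$ (since $MN$ properly contains $M$), so $\order{G:M} = \order{N:M\cap N}$. This reduces $\zeta_{G,N}(d)$ to the problem of (i) enumerating the possible intersections $H = M \cap N$ and (ii) counting the maximal subgroups lying over each $H$. The argument splits naturally into the cases where $T$ is abelian and where $T$ is nonabelian.

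In the abelian case $T = C_p$, the subgroup $H = M \cap N$ is normalized by $N$ (as $N$ is abelian) and by $M$, hence by $G = MN$, and minimality of $N$ forces $H = 1$. Thus every $M \in \M(G,N)$ is a complement to $N$ in $G$, giving $\order{G:M} = \order{N}$, and the total count is $\order{N} \cdot \order{H^1(G/N, N)}$: there are $\order{H^1(G/N,N)}$ conjugacy classes of complements, each of size $\order{N}$, because a complement $M$ is self-normalizing (otherwise $G = M \times N$ would contain a minimal normal subgroup strictly inside $M$, contradicting uniqueness of $N$). Combining this with the Aschbacher--Guralnick-style bound $\order{H^1(G/N,N)} \leq \order{N}^{1/2}$ for an irreducible faithful module, one obtains $\zeta_{G,N}(d) \leq \order{N}^{3/2-d} = \alpha(T)^{-n(d-3/2)}$ since $\alpha(C_p) = p$.

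For nonabelian $T$, I would invoke the Aschbacher--Scott description of the maximal subgroups of $G$ not containing the unique minimal normal subgroup $N = T^n$: these fall into product type, diagonal type, and twisted wreath (blow-up) families, with $G/N$ embedding into $\Aut(T) \wr \Sym{n}$ and acting transitively on the $n$ factors. In each family one has good control over both the number of maximal subgroups and the minimum index $\order{G:M}$. The leading contribution comes from product-type subgroups, where $M \cap N$ has (up to permutation of factors) one coordinate equal to a maximal subgroup $T_0 < T$, giving $\order{G:M} = \order{T:T_0}$. Summing the contributions across families, and using the properties of $\alpha(T)$ (which is calibrated to absorb the minimum maximal-subgroup index and maximal-subgroup count in $T$), the total is at most $\alpha(T)^{-n(d-1)}$.

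The main obstacle is this case analysis: one must verify that each nondominant Aschbacher--Scott family contributes negligibly relative to the product-type term, and that the bounds combine to produce the single clean exponent involving $\alpha(T)$. The abelian case is short but depends on a nontrivial cohomological bound. The final assertion $\zeta_{G,N}(d) \to 0$ as $\order{N} \to \infty$ then follows because either $n \to \infty$ (giving exponential decay) or $\order{T} \to \infty$ and hence $\alpha(T) \to \infty$ by the stated properties of $\alpha$.
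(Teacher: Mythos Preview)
Your abelian case is essentially the paper's Proposition~3.2: complements, self-normalizing, count by $H^1$, bound by $\order{N}^{1/2}$. One small omission: you implicitly assume $G/N$ acts faithfully on $N$. When it does not, the paper shows $N \leq \Frat{G}$, so $\M(G,N) = \emptyset$ and the sum is zero; only then does the Guralnick--Hoffman cohomology bound apply.

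Your nonabelian sketch has the right three-way split (product, diagonal, twisted wreath), matching the paper's Proposition~3.4, but it contains a real error and understates where the difficulty lies.

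The error is in the product-type index. You write that $M \cap N$ has one coordinate equal to a maximal $T_0 < T$ and conclude $\order{G:M} = \order{T:T_0}$. But $M$ permutes the $n$ factors of $N$ transitively, so the projections $(M \cap N)\pi_i$ are automorphic images of one another; hence $M \cap N \cong T_0 \times T_0^{\alpha_2} \times \cdots \times T_0^{\alpha_n}$ and $\order{G:M} = \order{T:T_0}^{n}$. That exponent $n$ is precisely what forces decay like $\alpha(T)^{-n(d-1)}$; with your stated index the bound would fail.

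The understated difficulty is that the product-type contribution is not handled by a soft appeal to ``$\alpha(T)$ is calibrated to absorb it''. Having fixed the first projection $H_1 = L \cap T$ for some core-free maximal $L$ of an almost simple $A$ with socle $T$, the remaining $n-1$ projections are determined only up to an element of $\Out T$ each, so one needs
\[
\sum_{L \in \C(A,T)} \frac{\order{\Out T}^{n-1}}{\order{A:L}^{n(d-1)}} \; < \; \alpha^*(T)^{-n(d-1)}
\]
for \emph{every} almost simple $A$ with socle $T$. The paper isolates this as a separate Proposition~3.3 and proves it by a CFSG case analysis (alternating, Lie type via Liebeck's order bounds and H\"as\"a's class counts, sporadic by direct computation), with the $\order{\Out T}$ margin built into the definition of $\alpha^*$. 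The diagonal and twisted-wreath families are comparatively soft (bounded via block systems and the Jaikin-Zapirain--Pyber complement count, as in Detomi--Lucchini), but this almost-simple inequality is the engine of the nonabelian case, and your proposal does not engage with it.
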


Theorem~\ref{thm:zeta} has the following immediate consequence.

\begin{cor}
\label{cor:zeta}
Let $d \geq 2$. 
Let $G$ be a finite $d$\nbd generated group with a unique minimal normal subgroup~$N$.
Write $N = T^n$ where $T$ is simple.
Then 
\[
P_d(G, N) > 1 - \alpha(T)^{-n(d-\iota)},
\] 
where $\iota = \frac{3}{2}$ if $T$ is abelian and $\iota = 1$ otherwise.
\end{cor}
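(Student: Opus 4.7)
The plan is to derive this directly from Theorem~\ref{thm:zeta} by combining it with the Gasch\"utz-type inequality
\[
P_d(G) \geq \bigl(1 - \zeta_{G, N}(d)\bigr)\, P_d(G/N)
\]
recalled immediately before the corollary (to be formalised as Lemma~\ref{lem:prob}). The hypothesis $d \geq d(G)$ required by that lemma is satisfied because $G$ is $d$-generated by assumption.

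First, since $G/N$ is $d$-generated as a quotient of a $d$-generated group, $P_d(G/N) > 0$, so I may divide through to obtain
\[
P_d(G, N) = \frac{P_d(G)}{P_d(G/N)} \geq 1 - \zeta_{G, N}(d).
\]
Next, I invoke Theorem~\ref{thm:zeta}, whose hypotheses on $G$ and $N$ are precisely those of the corollary, to bound $\zeta_{G, N}(d) < \alpha(T)^{-n(d - \iota)}$. Combined with the previous step, this yields $P_d(G, N) > 1 - \alpha(T)^{-n(d - \iota)}$, which is the stated inequality (the corollary as displayed appears to be missing a ``$1 -$'', as evidenced by the concluding limit assertion).

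For the concluding limit as $\order{N} \to \infty$, the key observation is that the exponent $n(d - \iota)$ is strictly positive: since $d \geq 2$ and $\iota \in \{1, 3/2\}$, we have $d - \iota \geq 1/2 > 0$. Using the two properties of $\alpha$ stated just before Theorem~\ref{thm:zeta}, namely that $\alpha(T) \geq c$ for some absolute constant $c > 0$ and $\alpha(T) \to \infty$ as $\order{T} \to \infty$, and noting that $\order{N} = \order{T}^n \to \infty$ forces at least one of $n$ or $\order{T}$ to tend to infinity, we see that $\alpha(T)^{n(d - \iota)} \to \infty$ in either case, whence $P_d(G, N) \to 1$.

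I expect no real obstacle: this corollary is a formal consequence of Theorem~\ref{thm:zeta} together with Lemma~\ref{lem:prob}. The only mild care needed is in verifying the hypothesis $d \geq d(G)$ required by Gasch\"utz's lemma and in confirming the positivity of the exponent $d - \iota$, both of which are immediate from $d \geq 2$.
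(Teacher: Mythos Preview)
Your proposal is correct and matches the paper's approach exactly: the paper simply declares the corollary an ``immediate consequence'' of Theorem~\ref{thm:zeta} via the Gasch\"utz inequality (Lemma~\ref{lem:prob}), and you have supplied precisely those details. Your observation that the displayed inequality is missing a ``$1-{}$'' is also correct---the intended bound is $P_d(G,N) > 1 - \alpha(T)^{-n(d-\iota)}$, as the concluding limit assertion confirms (and for that limit you implicitly need the sharper bound $\alpha(T) \geq C > 1$ from Lemma~\ref{lem:alpha_bound}, not merely $c>0$).
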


Corollary~\ref{cor:zeta}, in particular, implies that $P_d(G, N) \to 1$ as $\order{N} \to \infty$, a fact first proved by Lucchini and Morini \cite{LucchiniMorini02}.
However, to prove Theorem~\ref{thm:main}, we need the stronger asymptotic estimate in Theorem~\ref{thm:zeta} and not just the fact that $P_d(G, N) \to 1$.
That said, combining the observation that $P_d(G, N) \to 1$ as $\order{N} \to \infty$ with Theorem~\ref{thm:main}, which gives $P_d(G, N) \to 1$ as $\order{G/N} \to \infty$, yields the following result that generalises the theorem of Lucchini and Morini.

\begin{cor}
\label{cor:monolthic}
Let $d \geq 2$. 
Let $G$ be a finite $d$\nbd generated group with a unique minimal normal subgroup~$N$. 
Then $P_d(G, N) \to 1$ as $\order{G} \to \infty$.
\end{cor}

Detomi and Lucchini \cite{DetomiLucchini13} also proved that $P_{d}(G,N) \geq 53/90$, and, as explained in the proof of Proposition~\ref{prop:zeta_nonabelian}, two cases in our proof of Theorem~\ref{thm:zeta} follow the strategy in \cite{DetomiLucchini13}, but the bulk of our proof is dedicated to a third case, which requires an intricate analysis of product actions and detailed information about almost simple groups.

\paragraph{Widths of chief factors in prescribed chief tails}
Returning to \eqref{eq:main}, for each $i \in \{1, \dots, \ell \}$, write $N_{i-1}/N_i = T_i^{n_i}$ where $T_i$ is simple.
We call $n_i$ the \emph{width} of the chief factor $T_i^{n_i}$.
In view of the bound in Theorem~\ref{thm:zeta}, the second main aspect of our proof concerns the widths of the chief factors in a prescribed chief tail.
In the previous special cases that have been considered, such as uniserial groups constructed as iterated wreath products of alternating groups in \cite{Bhattacharjee94}, the sequence $(n_1, n_2, \dots)$ is strictly increasing.
However, in our general setting, this is not the case, as the following example highlights.
In this example, we call a chief factor $N_{i-1}/N_i$ \emph{Frattini} if $N_{i-1}/N_i \leq \Frat{G/N_i}$ and \emph{non-Frattini} otherwise.

\begin{ex} Let $p$ be an odd prime.
\label{ex:width}
\begin{enumerate}
\item 
\label{it:width_almost_simple}
Let $G = \Aut(\POm_8^+(p))$.
Since $\Out(\POm_8^+(p)) \cong S_4$ (see \cite[p.~38]{KleidmanLiebeck90} for example), $G$ is uniserial with chief factors 
\[
C_2 \quad C_3 \quad C_2^2 \quad \POm_8^+(p),
\]
all of which are non-Frattini.
\item
\label{it:width_affine}
Example~\ref{ex:affine_equality} considers the affine group $G = p^4 \semidirect H$ where $p \equiv \pm 1 \pmod{8}$ and $H \leq \GL_4(p)$ is given as the full preimage of $A_4^2 \semidirect C_4 \leq S_4 \wr S_2 \leq \PGL_4(p)$, and we prove that $G$ is uniserial with chief factors
\[
C_2 \quad (C_2) \quad C_3^2 \quad C_2^4 \quad (C_2) \quad C_p^4,
\]
where the Frattini chief factors are in brackets. 
\end{enumerate}
\end{ex}

By Example~\ref{ex:width}\ref{it:width_affine}, we need to exclude Frattini chief factors to ensure that the widths do not strictly decrease, and by part~\ref{it:width_almost_simple} the widths can still strictly decrease even when we do (by part~\ref{it:width_affine} they can also stay constant).
Despite this, the result following shows that the widths of the non-Frattini chief factors in a prescribed chief tail do usually strictly increase, with Example~\ref{ex:width} capturing the main exceptions.

\begin{thm}
\label{thm:width_simple}
Let $G$~be a finite group with a prescribed chief tail $\T = (N_0, \dots, N_\ell)$.
Let $N_{i-1}/N_i = S^m$ and $N_{j-1}/N_j = T^n$ be consecutive non-Frattini chief factors in $\T$, with $S$ and $T$  simple and $i < j$.
Then
\begin{enumerate}
\item $n \geq m$ unless $S = C_2$ and $T = \mathrm{P}\Omega_8^+(q)$
  for an odd prime power $q$
\item $n > m$ unless $S$ is cyclic and $T$ is nonabelian simple, or
  $S$ and $T$ are cyclic and $m = n \leq 4$.
\end{enumerate}
\end{thm}

Theorem~\ref{thm:width_simple} is a simplified version of Theorem~\ref{thm:width}, which gives more precise information on the chief factors in prescribed chief tails.
The proof of this result involves, among other things, representation-theoretic arguments to control the structure of linear groups, inspired by \cite{FeitTits78}.

\begin{paragraph}{Notation}
Our notation for finite simple groups follows \cite{KleidmanLiebeck90}.
The \emph{rank} of a group of Lie type refers to its untwisted rank, i.e., the rank of the ambient algebraic group.
For a finite group $G$, we write $\Frat{G}$ for the Frattini subgroup of $G$, and for a prime $p$, we write $\Op{G}$ for the largest normal $p$-subgroup of $G$.
All logarithms are base-two.
\end{paragraph}

\section{Preliminaries}
\label{s:prelims}

\subsection{Generation}
\label{ss:p_prob}

The following well-known lemma is a simplified version of a result originally due to Gasch\"utz \cite[Satz~1]{Gaschutz59}. 
Given the centrality of this lemma to this paper, we provide the proof.
Recall that for a group $G$ and a normal subgroup $N \normal G$, we write $\M(G,N)$ for the set of maximal subgroups of $G$ that do not contain $N$.

\begin{lemma}
\label{lem:prob}
Let $G$ be a finite $d$\nbd generated group, and let $N$ be a normal subgroup of $G$.
Then
\[
P_d(G) \geq \biggl( 1 - \sum_{M \in \M(G, N)} \order{G : M}^{-d} \biggr) \, P_d(G/N).
\]
\end{lemma}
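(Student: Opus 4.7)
My plan is to decompose $P_d(G)$ as a conditional probability times $P_d(G/N)$ and bound the conditional probability by counting lifts of a generating $d$\nbd tuple of $G/N$, following the classical Gasch\"utz-style argument.

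First, I would observe that any $d$\nbd tuple generating $G$ projects to a $d$\nbd tuple generating $G/N$, so
\[
P_d(G) = P_d(G/N) \cdot q,
\]
where $q$ is the conditional probability that a uniform random tuple $(g_1,\dots,g_d) \in G^d$ generates $G$ given that its image in $G/N$ generates $G/N$. A lift of a generating tuple of $G/N$ fails to generate $G$ precisely when it is contained in some maximal subgroup $M$ of $G$; since its image in $G/N$ is generating, $M$ cannot contain $N$, so $M \in \M(G, N)$.

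The core step is the count: for a fixed generating tuple $(\bar g_1,\dots,\bar g_d)$ of $G/N$, the number of lifts in $G^d$ lying in a given $M \in \M(G, N)$ is exactly $\order{M \cap N}^d = (\order{N}/\order{G : M})^d$. This uses that maximality together with $N \nleq M$ forces $MN = G$, so each coset of $N$ in $G$ meets $M$ in exactly $\order{M \cap N}$ elements, and hence each $\bar g_i$ has exactly $\order{M \cap N}$ lifts inside $M$. A union bound over $M \in \M(G, N)$ then yields
\[
q \geq 1 - \sum_{M \in \M(G, N)} \frac{\order{M \cap N}^d}{\order{N}^d} = 1 - \sum_{M \in \M(G, N)} \order{G : M}^{-d},
\]
and multiplying through by $P_d(G/N)$ gives the claim.

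I do not expect serious obstacles. The only subtle point is the identity $\order{M \cap N} = \order{N}/\order{G : M}$, which is immediate from the second isomorphism theorem combined with $MN = G$. The hypothesis that $G$ is $d$\nbd generated simply ensures $P_d(G/N) > 0$ so that the conditional probability $q$ is well-defined and the inequality is nonvacuous.
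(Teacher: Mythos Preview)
Your proposal is correct and follows essentially the same strategy as the paper: reduce to counting lifts of a generating $d$\nbd tuple of $G/N$ that land in each $M \in \M(G,N)$, use $MN = G$ to see that each coset of $N$ meets $M$ in exactly $\order{M \cap N}$ elements, and apply a union bound.

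There is one minor but genuine difference worth noting. The paper first invokes Gasch\"utz's Lemma \cite{Gaschutz55} to assert that every fibre of the projection $X \to Y$ (generating tuples of $G$ onto generating tuples of $G/N$) has the same size, then fixes a generating tuple of $G$ itself and bounds that particular fibre. Your argument bypasses Gasch\"utz's Lemma entirely: by phrasing things as a conditional probability and observing that the bound on the failure probability is uniform over all generating tuples $\bar g$ of $G/N$, you never need to know that the success probability is constant across fibres, only that the failure probability is uniformly bounded. This makes your version slightly more self-contained, at the cost of not yielding the exact identity $P_d(G)/P_d(G/N) = \order{\phi^{-1}(\bar g)}/\order{N}^d$ that the paper's route gives along the way.
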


\begin{proof}
Write 
\begin{align*}
X &= \set{ (g_1, \dots, g_d) \in G^d}{\< g_1, \dots, g_d \> = G } \\
Y &= \set{ (Ng_1, \dots, Ng_d) \in (G/N)^d}{\< Ng_1, \dots, Ng_d \> = G/N }.
\end{align*}
The map $\phi \colon X \to Y$ defined as $(g_1, \dots, g_d) \mapsto
(Ng_1, \dots, Ng_d)$ is surjective and Gasch\"utz's Lemma \cite[Satz~1]{Gaschutz55} implies that the size of the preimage $(Ng_1, \dots, Ng_d)\phi^{-1}$ is independent of the choice of $(Ng_1, \dots, Ng_d) \in G/N$.
Fix $g = (g_1, \dots, g_d) \in G^d$ such that $\< g_1, \dots, g_d \> = G$.
Then
\[
P_d(G) = \frac{\order{X}}{\order{G}^d} = \frac{\order{(Ng_1, \dots,
    Ng_d)\phi^{-1}} \order{Y}}{\order{G}^d} = \frac{\order{(Ng_1, \dots, Ng_d)\phi^{-1}}}{\order{N}^d} \cdot P_d(G/N).
\]

Let $n = (n_1, \dots, n_d) \in N^d$. If $ng \notin (Ng_1, \dots, Ng_d)\phi^{-1}$, then $\< n_1g_1, \dots, n_dg_d \> \neq G$, so there exists a maximal subgroup $M$ of $G$ such that $ng \in M^d$.  Since $\< Nn_1g_1, \dots, Nn_dg_d \> = G/N$, necessarily $N \nleq M$, so $n \in N^d \cap M^dg^{-1}$ for some $M \in \M(G, N)$.
Therefore, 
\[
1 - \frac{\order{(Ng_1, \dots, Ng_d)\phi^{-1}}}{\order{N}^d} = \frac{\order{N}^d - \order{(Ng_1, \dots, Ng_d)\phi^{-1}}}{\order{N}^d} \leq \sum_{M \in \M(G, N)} \frac{\order{N^d \cap M^dg^{-1}}}{\order{N}^d}.
\]

For all $M \in \M(G, N)$, since $N \nleq M$, we know that $G = NM$, so we can fix $x \in N^d$ such that $xg \in M^d$, but then $(N^d \cap M^dg^{-1})x^{-1} = N^dx^{-1} \cap M^d(xg)^{-1} = (N \cap M)^d$.
Therefore,
\[
\frac{\order{N \cap Mg^{-1}}}{\order{N}^d} = \frac{\order{N \cap M}^d}{\order{N}^d} =  \order{N: N \cap M}^{-d} = \order{G : M}^{-d}.
\]

Drawing these conclusions together establishes the bound in the statement.
\end{proof}

One consequence of Lemma~\ref{lem:prob} is the following well-known result.

\begin{corollary}
\label{cor:prob_frattini}
Let $d$ be a positive integer.
Let $G$ be a finite group and let $N$ be a normal subgroup of $G$.
Then $P_d(G) \leq P_d(G/N)$ with equality if $N \leq \Frat{G}$.
\end{corollary}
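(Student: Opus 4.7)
The plan is to prove the two inequalities separately: first the general inequality $P_d(G) \leq P_d(G/N)$ by a direct counting argument, and then the reverse inequality under the Frattini hypothesis by applying Lemma~\ref{lem:prob}.

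For the general inequality I would observe that coordinatewise reduction modulo $N$ sends every generating $d$\nbd tuple of $G$ to a generating $d$\nbd tuple of $G/N$. Using the notation $X$ and $Y$ from the proof of Lemma~\ref{lem:prob}, this yields a map $X \to Y$ whose fibres have size at most $\order{N}^d$, since each element of $(G/N)^d$ has precisely $\order{N}^d$ preimages in $G^d$ under the full reduction map. Hence $\order{X} \leq \order{N}^d \cdot \order{Y}$, and dividing through by $\order{G}^d = \order{N}^d \cdot \order{G/N}^d$ yields $P_d(G) \leq P_d(G/N)$.

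For the reverse inequality when $N \leq \Frat{G}$, I would apply Lemma~\ref{lem:prob}. Since every maximal subgroup of $G$ contains $\Frat{G}$ and therefore contains $N$, the set $\M(G, N)$ is empty and the sum on the right-hand side of Lemma~\ref{lem:prob} vanishes. If $G$ is $d$\nbd generated, the lemma then gives $P_d(G) \geq P_d(G/N)$, which combined with the general inequality yields equality. If instead $G$ is not $d$\nbd generated, then I would observe that $G/N$ is not $d$\nbd generated either: whenever $\< Ng_1, \dots, Ng_d \> = G/N$, we have $\< g_1, \dots, g_d \> N = G$, and this subgroup must equal $G$, because otherwise $\< g_1, \dots, g_d \>$ would lie in some maximal subgroup $M$, and then $M \supseteq \Frat{G} \supseteq N$ would force $MN = M \neq G$, a contradiction. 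Hence both probabilities equal zero and equality holds trivially.

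I don't anticipate any real obstacle beyond the minor bookkeeping around the $d$\nbd generated hypothesis required to invoke Lemma~\ref{lem:prob}; the Frattini case really is just the observation that non-generators are exactly the elements of $\Frat{G}$, repackaged probabilistically.
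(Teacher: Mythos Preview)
Your proof is correct and follows essentially the same route as the paper: the general inequality via the surjection of generating tuples, and the Frattini case via Lemma~\ref{lem:prob} with $\M(G,N) = \emptyset$. You are in fact more careful than the paper, which invokes Lemma~\ref{lem:prob} without explicitly addressing the $d$\nbd generated hypothesis; your separate treatment of the case $P_d(G) = 0$ fills that small gap.
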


\begin{proof}
If $\< g_1, \dots, g_d \> = G$, then $\< Ng_1, \dots, Ng_d \> = G/N$, so $P_d(G/N) \geq P_d(G)$.
If $N \leq \Frat{G}$, then every maximal subgroup of $G$ contains $N$, so $P_d(G) \geq P_d(G/N)$, by  Lemma~\ref{lem:prob}.
\end{proof}

The following result, which is the main theorem of \cite{LucchiniMenegazzo97}, is a key tool for us.

\begin{theorem}[Lucchini \& Menegazzo, 1997]
\label{thm:lucchini_menegazzo}
Let $G$ be a noncyclic finite group with a unique minimal normal subgroup $N$.
Then $d(G) = \max\{ 2, d(G/N) \}$.
\end{theorem}

Theorem~\ref{thm:lucchini_menegazzo} naturally yields a generalization to groups with a prescribed chief tail.

\begin{corollary}
\label{cor:lucchini_menegazzo}
Let $G$ be a noncyclic finite group with a prescribed chief tail $(N_0, \dots, N_\ell)$.
Then $d(G) = \max\{ 2, d(G/N_0) \}$.
In particular, every finite uniserial group is $2$-generated.
\end{corollary}

\begin{proof}
For each $i \in \{ 1, \dots, \ell \}$, the subgroup $N_{i-1}/N_i$ is the unique minimal normal subgroup of $G/N_i$, and $(G/N_i)/(N_{i-1}/N_i) \cong G/N_{i-1}$.
Therefore, repeatedly applying Theorem~\ref{thm:lucchini_menegazzo} gives
\[
d(G) = \max\{ 2, d(G/N_{\ell-1}) \} = \max\{ 2, d(G/N_{\ell-2}) \} = \cdots = \max\{ 2, d(G/N_0) \},
\]
as required. 
If $G$ is uniserial, then $G$ has a prescribed chief tail $(N_0, \dots, N_{\ell})$ where $G = N_0$, so either $G$ is cyclic and $d(G) = 1$, or, by the previous observation, $d(G) = \max\{ 2, d(G/G) \} = 2$.
\end{proof}

\subsection{Abelian minimal normal subgroups}
\label{ss:p_minimal_normal}

\begin{lemma}
\label{lem:complemented}
Let $G$ be a finite group with an abelian minimal normal subgroup $N$.
Then $N \nleq \Frat{G}$ if and only if $N$ has a complement in $G$.
\end{lemma}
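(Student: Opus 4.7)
The plan is to prove the two directions separately, with both relying on standard manipulations of maximal subgroups and the minimality of $N$.

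For the ``if'' direction, suppose $H$ is a complement to $N$, so $G = NH$ and $N \cap H = 1$. Since $N$ is a minimal normal subgroup it is nontrivial, and therefore $H$ is a proper subgroup of $G$. Let $M$ be any maximal subgroup of $G$ containing $H$. Then $G = NH \leq NM$, so $NM = G$, which forces $N \nleq M$. Hence $\Frat{G}$, being the intersection of all maximal subgroups, does not contain $N$.

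For the ``only if'' direction, suppose $N \nleq \Frat{G}$, so we may fix a maximal subgroup $M$ of $G$ with $N \nleq M$. The maximality of $M$ then gives $G = NM$. I claim that $N \cap M$ is normal in $G$. It is clearly normalized by $M$, and since $N$ is abelian it is also normalized by $N$; thus it is normalized by $NM = G$. Now $N \cap M$ is a normal subgroup of $G$ properly contained in $N$ (properly, because $N \nleq M$), so by minimality $N \cap M = 1$. Combined with $G = NM$, this shows that $M$ is a complement to $N$ in $G$.

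There is no real obstacle here: the whole argument hinges on the single observation that when $N$ is abelian, any subgroup of $N$ normalized by a complement-candidate $M$ with $G = NM$ is automatically normal in $G$. The minimal-normal hypothesis then forces the intersection $N \cap M$ to be trivial, upgrading $M$ from ``not containing $N$'' to ``complementing $N$''. I would present the proof in exactly the two short paragraphs above, with no case analysis required.
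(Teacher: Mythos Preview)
Your proof is correct and is essentially identical to the paper's: both directions use the same argument (choosing a maximal subgroup $M$ not containing $N$, then showing $N\cap M\normal G$ via abelianness and minimality for one direction; embedding a complement in a maximal subgroup for the other).
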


\begin{proof}
First assume that $N \nleq \Frat{G}$. 
Then $G$ has a maximal subgroup $M$ not containing $N$.
Then $G = MN$ and $M \cap N$ is normal in $G = MN$ since it is normalized by $M$ and $N$ is abelian. 
As $N \nleq M$ and $N$ is a minimal normal subgroup, we deduce that $M \cap N = 1$, so $M$ is a complement to $N$ in $G$.

Conversely, assume that $H$ is a complement to $N$ in $G$.
Then $H$ is contained in a maximal subgroup $M$ of $G$, and hence $N \nleq M$ since otherwise $G = NH \leq M$, so $N \nleq \Frat{G}$, as claimed.
\end{proof}

\begin{lemma}
\label{lem:faithful}
Let $G$ be a finite group with a unique minimal normal subgroup $N$ and suppose $N$~is abelian.
Assume that $\Frat{G} = 1$.
Then $\Cent{G/N}{N} = 1$.
\end{lemma}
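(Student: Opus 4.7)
The plan is to use the complement produced by Lemma~\ref{lem:complemented} to decompose $C := \Cent{G}{N}$ and then exploit the uniqueness of $N$ as a minimal normal subgroup to eliminate the ``extra'' factor.

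First, since $N$ is an abelian minimal normal subgroup of $G$ and $\Frat{G} = 1$, Lemma~\ref{lem:complemented} gives a complement $H$ to $N$ in $G$, so $G = NH$ with $N \cap H = 1$. Note that $N \leq C$ since $N$ is abelian, and that $C \trianglelefteq G$. Let $K = C \cap H$. By Dedekind's modular law, $C = C \cap NH = N(C \cap H) = NK$, and $N \cap K \leq N \cap H = 1$. Moreover, $K \leq C$ means every element of $K$ commutes with every element of $N$, so in fact $C = N \times K$ as an internal direct product.

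Next, I would show that $K$ is normal in $G$. On the one hand, $N$ centralizes $K$ (since $K \leq C$), so $N$ normalizes $K$. On the other hand, for any $h \in H$, the subgroup $C$ is normal in $G$ and $H$ normalizes itself, so $hKh^{-1} = hCh^{-1} \cap hHh^{-1} = C \cap H = K$. Since $G = NH$, it follows that $K \trianglelefteq G$.

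Now the uniqueness of $N$ finishes the argument: if $K$ were nontrivial, then $N$, being the unique minimal normal subgroup of $G$, would be contained in $K$, contradicting $N \cap K = 1$. Hence $K = 1$, so $C = N$, and therefore $\Cent{G/N}{N} = C/N = 1$. I do not anticipate a real obstacle here; the only mildly nonobvious step is spotting that the complement $H$ lets us realise $C/N$ as the \emph{subgroup} $K = C \cap H$ of $G$ itself, which is then forced to be normal.
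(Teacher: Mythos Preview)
Your proof is correct and is essentially the same as the paper's: the paper works directly with $\Cent{H}{N}$, which is exactly your subgroup $K = C \cap H$, shows it is normal in $G$ by the same argument (normal in~$H$, centralized by~$N$), and then kills it using the uniqueness of~$N$. The only cosmetic difference is that the paper concludes via the isomorphism $H \cong G/N$ identifying $\Cent{H}{N}$ with $\Cent{G/N}{N}$, whereas you deduce $C = N$ and then pass to the quotient.
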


\begin{proof}
Since $\Frat{G} = 1$, by Lemma~\ref{lem:complemented} there is a complement~$H$ for~$N$.
Consequently, $H \cong G/N$ and $\Cent{H}{N} \cong \Cent{G/N}{N}$.
Now $\Cent{H}{N}$ is normal in $H$ and centralized by $N$, so $\Cent{H}{N}$ is a normal subgroup of $HN = G$.
However, $\Cent{H}{N} \cap N \leq H \cap N = 1$ and $N$ is the unique minimal normal subgroup of $G$, so $\Cent{G/N}{N} \cong \Cent{H}{N} = 1$, as required.
\end{proof}

\subsection{The function \texorpdfstring{\boldmath $\alpha$}{α}}
\label{ss:p_function}

We now define the function $\alpha$ that features in Theorem~\ref{thm:zeta}.
For a finite simple group $T$, write
\begin{equation}
\label{eq:f}
\alpha(T) = \left\{ 
\begin{array}{ll}
\alpha^*(T)                                                                                        & \text{if $T$ is abelian} \\
\left(\alpha^*(T)^{-2} + \tfrac{61}{60}\left(\half\order{T}^{3/8}\right)^{-2}\right)^{-1/2} & \text{if $T$ is nonabelian,}
\end{array}
\right.
\end{equation}
where
\begin{equation}
\label{eq:f-star}
\alpha^*(T) = \left\{ 
\begin{array}{ll}
p         & \text{if $T = C_p$} \\
m/4       & \text{if $T = A_m$} \\
q^{r/30}  & \text{if $T$ is a group of Lie type of rank $r$ over $\F_q$} \\
2         & \text{if $T$ is a sporadic group or ${}^2F_4(2)'$.}
\end{array}
\right.
\end{equation}
To avoid ambiguity, we view $T$ as an alternating group if possible, we view $\POm^-_4(q)$ as $\PSL_2(q^2)$, $\PSL_3(2)$ as $\PSL_2(7)$ and $\PSp_4(3)$ as $\PSU_4(2)$.
Note that $\alpha(T) \to \infty$ as $\order{T} \to \infty$. 

\begin{lemma}
\label{lem:alpha_bound}
There exists a real number $C > 1$ such that all finite simple groups $T$ satisfy $\alpha(T) \geq C$.
\end{lemma}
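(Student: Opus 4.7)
The plan is to handle the abelian and nonabelian cases separately, reducing in each case to a finite check.

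For abelian $T = C_p$, we have $\alpha(T) = p \geq 2$ directly, so any $C \leq 2$ suffices on this class. For nonabelian $T$, write $A(T) = \alpha^*(T)^{-2}$ and $B(T) = \tfrac{61}{60}\bigl(\tfrac{1}{2}\order{T}^{3/8}\bigr)^{-2}$, so that $\alpha(T) = (A(T) + B(T))^{-1/2}$. It suffices to exhibit an absolute constant $c < 1$ with $A(T) + B(T) \leq c$ for every nonabelian finite simple group $T$, for then $C = c^{-1/2} > 1$. I would treat the three families appearing in the definition of $\alpha^*$ in turn. For alternating $A_m$ with $m \geq 5$, one has $\alpha^*(T) = m/4 \geq 5/4$, so $A(T) \leq 16/25$, and $\order{T} \geq 60$ provides a uniform bound on $B(T)$. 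For sporadic groups and ${}^2F_4(2)'$, $\alpha^*(T) = 2$ yields $A(T) = 1/4$, and the lower bound $\order{T} \geq \order{M_{11}} = 7920$ makes $B(T)$ negligible. For a group of Lie type of rank $r$ over $\F_q$, $A(T) = q^{-r/15}$, and this is the delicate case.

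The main obstacle is the Lie type case, since $A(T)$ tends to $1$ slowly as $q^r$ decreases, so for small $q, r$ the term $B(T)$ must do the compensating. My approach is to pick a threshold $K$: for $q^r \geq K$ one uses standard lower bounds on $\order{T}$ (for instance $\order{\PSL_2(q)} \geq q^{3}/2$ for $q \geq 3$, and analogous polynomial-in-$q$ bounds for the other classical and exceptional families as recorded in \cite{KleidmanLiebeck90}) to show $B(T)$ is small enough that $A(T) + B(T) \leq c < 1$; for $q^r < K$ there are finitely many cases and each is a direct numerical verification. Throughout, the conventions stated just after equation~(\ref{eq:f-star})---treating $T$ as alternating whenever possible, and using the identifications $\POm_4^-(q) = \PSL_2(q^2)$, $\PSL_3(2) = \PSL_2(7)$ and $\PSp_4(3) = \PSU_4(2)$---ensure that $\alpha^*$ is well defined and that the smallest effective $\PSL_2(q)$ one must consider is $\PSL_2(7)$ rather than $\PSL_2(4) = A_5$.

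Combining the bounds from the three families with the abelian case yields a concrete absolute constant $C > 1$. Short numerical estimates indicate that the extremal case is $\PSL_2(7)$, where $A + B \approx 0.966$, giving a value of $C$ close to $1.02$; any $C \leq (0.97)^{-1/2}$ is certainly safe.
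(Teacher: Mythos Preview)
Your proposal is correct and follows essentially the same route as the paper: a case-by-case analysis over abelian, alternating, sporadic, and Lie type simple groups, checking in each family that $\alpha(T)$ is uniformly bounded above $1$. The paper is slightly terser in the Lie type case, simply recording that the conventions force $q \geq 7$ when $r = 1$ and $q \geq 3$ when $r = 2$ and asserting $\alpha(T) \geq 1.01$; your threshold argument and identification of $\PSL_2(7)$ as the extremal case amount to the same verification carried out a little more explicitly.
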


\begin{proof}
If $T = C_p$ with $p$ prime, then
\[
\alpha(T) = p \geq 2.
\]
If $T = A_m$ with $m \geq 5$, then
\[
\alpha(T) = \left(\left(m/4\right)^{-2} + \tfrac{61}{60}\left(\half(m!/2)^{3/8}\right)^{-2}\right)^{-1/2} 
\geq \left(\left(\tfrac{5}{4}\right)^{-2} + \tfrac{61}{60}\left(\half\,60^{3/8}\right)^{-2}\right)^{-1/2} \geq 1.09.
\]
If $T$ is a group of Lie type of rank $r$ over $\F_q$, then $q \geq 3$ if $r = 2$ and $q \geq 7$ if $r = 1$, so
\[
\alpha(T) \geq \left((q^{r/30})^{-2} + \tfrac{61}{60}\left(\half\order{T}^{3/8}\right)^{-2}\right)^{-1/2} \geq 1.01.
\]
If $T$ is a sporadic group or ${}^2F_4(2)'$, then
\[
\alpha(T) = \left(2^{-2} + \tfrac{61}{60}\left(\half\order{T}^{3/8}\right)^{-2}\right)^{-1/2} 
\geq \left(2^{-2} + \tfrac{61}{60}\left(\half\,7920^{3/8}\right)^{-2}\right)^{-1/2} \geq 1.98.
\]
This proves that the result holds with $C = 1.01$.
\end{proof}

\section{Maximal subgroup zeta function}
\label{s:zeta}

The aim of this section is to prove Theorem~\ref{thm:zeta}, which concerns groups $G$ with a unique minimal normal subgroup $N$.
We will divide our analysis according to $N$.

\subsection{Preliminaries}
\label{ss:zeta_prelims}

For a group $G$ and a normal subgroup $N \normal G$, recall that $\M(G,N)$ is the set of maximal subgroups of $G$ that do not contain $N$ and let $\C(G,N)$ be a set of representatives for the conjugacy classes of members of~$\M(G,N)$.

\begin{lemma}
\label{lem:zeta_classes}
Let $G$ be a finite group with a unique minimal normal subgroup $N$.
Let $s$ be a real number.
Then 
\[
\sum_{M \in \M(G, N)} \order{G : M}^{-s} = \sum_{M \in \C(G, N)} \order{G : M}^{-(s-1)}.
\]
\end{lemma}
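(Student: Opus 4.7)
The plan is to show that every $M \in \M(G,N)$ is self-normalizing in $G$, so that its $G$-conjugacy class has size $\order{G : N_G(M)} = \order{G : M}$. Granted this, the identity follows immediately by grouping the left-hand sum according to conjugacy class: each class contributes $\order{G : M} \cdot \order{G : M}^{-s} = \order{G : M}^{-(s-1)}$, and summing over $M \in \C(G, N)$ gives the right-hand side.

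To establish self-normalization, suppose $M \in \M(G, N)$. Since $M \leq N_G(M) \leq G$ and $M$ is maximal, either $N_G(M) = M$, as desired, or $N_G(M) = G$, in which case $M \normal G$. In the latter case, $N \nleq M$ together with maximality of $M$ forces $G = MN$. Then $M \cap N$ is normal in $G$ and properly contained in $N$, so minimality of $N$ yields $M \cap N = 1$. Since $M$ and $N$ are both normal with trivial intersection, $G$ is the internal direct product $M \times N$. If $M \neq 1$, choose a minimal normal subgroup $M_0$ of $M$; since $N$ centralizes $M$, the subgroup $M_0$ is also normal in $G$, and $M_0 \leq M$ is disjoint from $N$, so $M_0$ is a minimal normal subgroup of $G$ distinct from $N$, contradicting the hypothesis. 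Hence $M = 1$, which in turn forces $G = N$ to be cyclic of prime order (the only way for the trivial subgroup to be maximal); this degenerate possibility is dealt with by direct inspection or by assuming at the outset that we are not in this trivial situation.

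The only real obstacle is the structural argument in the previous paragraph ruling out normal maximal subgroups in $\M(G, N)$; once self-normalization is in hand, the identity is nothing more than orbit-stabilizer together with reindexing the sum by conjugacy classes.
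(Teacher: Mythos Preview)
Your approach matches the paper's: prove each $M \in \M(G,N)$ is self-normalizing and then regroup the sum by conjugacy class. The paper dispatches non-normality in one line---any nontrivial normal subgroup of $G$ contains a minimal normal subgroup, hence contains $N$---so $N \nleq M$ forces $M$ non-normal immediately; your direct-product argument is correct but longer than needed. One small point: in the degenerate case $G \cong C_p$ the identity actually \emph{fails} (the two sides are $p^{-s}$ and $p^{1-s}$), so ``direct inspection'' does not rescue it; the paper's proof glosses over this edge case as well, and it is irrelevant to the applications.
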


\begin{proof}
Let $M \in \M(G, N)$.
Since $N$ is the unique minimal normal subgroup of $G$ and $N \nleq M$, we know that $M$ is not normal in $G$, so $\Norm{G}{M}$ is a proper subgroup of $G$ containing $M$ and hence $\Norm{G}{M} = M$.
Therefore, $\order{M^G} = \order{G : M}$, which proves the result.
\end{proof}

\subsection{Groups with an abelian unique minimal normal subgroup}
\label{ss:zeta_abelian}

We begin with the case where $G$ is a group with an abelian unique minimal normal subgroup $N$.

\begin{proposition}
\label{prop:zeta_abelian}
Let $G$ be a finite group with a unique minimal normal subgroup $N$, and assume that $N$ is abelian.
Let $s$ be a real number.
Then 
\[
\sum_{M \in \C(G, N)} \order{G : M}^{-(s-1)} \leq \order{N}^{-(s-3/2)}.
\]
Equivalently, if $N = T^n$ where $T$ has prime order, then
\[
\sum_{M \in \C(G, N)} \order{G : M}^{-(s-1)} \leq \alpha(T)^{-n(s-3/2)}.
\]
\end{proposition}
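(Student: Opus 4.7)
The plan is to identify $\M(G,N)$ with the set of complements of~$N$ in~$G$ and reduce the claim to a bound on $\order{\Cohom{G/N, N}}$.

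I would first dispose of the case $N \leq \Frat{G}$: every maximal subgroup of~$G$ contains~$N$, so $\M(G,N) = \emptyset$ and the sum is zero. Otherwise, since $N$ is the unique minimal normal subgroup, every nontrivial normal subgroup of~$G$ contains~$N$; applied to $\Frat{G}$, whose intersection with~$N$ is a normal subgroup of~$G$ strictly inside~$N$, this forces $\Frat{G} = 1$. Lemma~\ref{lem:complemented} then yields a complement~$L$ to~$N$, so $G = N \semidirect L$, and Lemma~\ref{lem:faithful} shows that~$L$ acts faithfully on~$N$; being an abelian minimal normal subgroup, $N$ is elementary abelian of some prime order~$p$ and is irreducible as an $\F_p L$-module.

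Next I would verify that every $M \in \M(G,N)$ is a complement of~$N$: maximality and $N \nleq M$ force $G = MN$, and then $M \cap N$ is normalised by~$M$ and by the abelian~$N$, so is normal in~$G$ and strictly below~$N$, giving $M \cap N = 1$ by uniqueness of the minimal normal subgroup. Hence $\order{G:M} = \order{N}$ for every $M \in \M(G,N)$. Standard cohomological bookkeeping then shows that the number of $G$-conjugacy classes of complements equals $\order{\Cohom{L,N}}$: complements of~$N$ in~$G$ correspond to cocycles in $Z^{1}(L,N)$; any $G$-conjugate of a complement~$L'$ is also an $N$-conjugate (decompose $g = n l'$ with $n \in N$, $l' \in L'$); and each $N$-orbit has size $\order{N}$ since $\Cent{N}{L'} = 0$ by faithful irreducibility. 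Therefore
\[
\sum_{M \in \C(G,N)} \order{G:M}^{-(s-1)} = \order{\Cohom{L,N}} \cdot \order{N}^{-(s-1)}.
\]

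This reduces the statement to the cohomological bound $\order{\Cohom{L,N}} \leq \order{N}^{1/2}$ for a finite group~$L$ acting faithfully and irreducibly on the elementary abelian $p$-group~$N$, and this is the main obstacle. The Aschbacher--Guralnick inequality $\order{\Cohom{L,V}} < \order{V}$ is a factor of $\order{V}^{1/2}$ too weak on its own; I would refine it via a $p$-local analysis of~$L$ (separating the cases $\Op{L} = 1$ and $\Op{L} \neq 1$ through inflation--restriction) together with bounds on $\order{\Cohom{}}$ for the almost-simple sections of~$L$ acting on their irreducible $\F_p$-modules. The equivalent formulation using~$\alpha$ is then immediate, since $N = T^n = C_p^n$ gives $\alpha(T) = p$ and $\order{N} = \alpha(T)^n$.
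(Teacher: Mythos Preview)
Your reduction is the same as the paper's: show that $\C(G,N)$ is a set of representatives for the conjugacy classes of complements to~$N$ in~$G$, identify these with elements of $H^{1}(G/N,N)$, and reduce everything to the inequality $\lvert H^{1}(G/N,N)\rvert \leq \lvert N\rvert^{1/2}$.

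The gap is in that last step. The bound you need is already a theorem of Guralnick and Hoffman \cite{GuralnickHoffman98}, and the paper simply cites it. Your proposed route---a $p$\nbd local analysis via inflation--restriction plus bounds for almost-simple sections---is, in outline, how that theorem is actually proved, but it is a substantial argument relying on the classification, and your sketch does not carry it out. (Incidentally, faithful irreducibility already forces $O_{p}(L)=1$ by Lemma~\ref{lem:no_normal_p-group}, so one of your two proposed cases is vacuous.) The right move is to quote Guralnick--Hoffman rather than attempt to reprove it.

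One minor omission: you show every member of $\M(G,N)$ is a complement, but to conclude $\lvert\C(G,N)\rvert = \lvert H^{1}(L,N)\rvert$ you also need the converse, that every complement is maximal. This is immediate---a complement contained in a maximal subgroup $M$ forces $N\nleq M$, so $M$ is itself a complement and hence equal by order---and the paper does note it explicitly.
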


\begin{proof}
Let $M \in \C(G, N)$. 
Then $M \cap N$ is normal in $G = MN$ since it is normalized by $M$ and $N$ is abelian. 
As $N \nleq M$ and $N$ is a minimal normal subgroup, we deduce that $M \cap N = 1$.  
Conversely, if $H$ is a complement to $N$ in $G$, then $H$ is contained in some conjugate of a member $M \in \C(G, N)$, but since $M$ is also a complement to $N$ in $G$, we deduce that $H$ is conjugate to $M$. 
Therefore, $\C(G, N)$ is a set of conjugacy class representatives for the complements to $N$ in $G$, which is well-known to be in bijection with the elements of the first cohomology group~$\Cohom{G/N,N}$ (see, for example, \cite[(11.1.3)]{Robinson96}). 
On the one hand, if $G/N$ does not act faithfully on $N$, then Lemma~\ref{lem:faithful} implies that $\Frat{G} \neq 1$ and hence $N \leq \Frat{G}$, so Lemma~\ref{lem:complemented} implies that $N$ has no complements, and we conclude that
\[
\sum_{M \in \C(G, N)} \order{G : M}^{-(s-1)} = 0.
\]
On the other hand, if $G/N$ does act faithfully on $N$, then $N$ is a faithful module for $G/N$ and it is irreducible since $N$ is a minimal normal subgroup of $G$, so \cite[Theorem~1]{GuralnickHoffman98} gives $\order{\Cohom{G/N,N}} \leq \order{N}^{1/2}$, and we conclude that
\[
\sum_{M \in \C(G, N)} \order{G : M}^{-(s-1)} = \frac{\order{\C(G, N)}}{\order{N}^{s-1}} \leq \order{N}^{-(s-3/2)},
\]
as required.
The final statement follows immediately from the definition of $\alpha(T)$.
\end{proof}

\subsection{Almost simple groups}
\label{ss:zeta_almost_simple}

Before considering the general case where $G$ is a group with a nonabelian unique minimal normal subgroup $N$, we focus on the special case where $N$ is a nonabelian simple group, so $G$ is almost simple, and we prove the following stronger version of our result in this case.

\begin{proposition}
\label{prop:zeta_almost_simple}
Let $G$ be an almost simple group with socle $T$.
Let $n \geq 1$ be an integer.
Then
\[
\sum_{M \in \C(G,T)} \frac{\order{\Out{T}}^{n-1}}{\order{G:M}^{n}} < \alpha^*(T)^{-n}.
\]
\end{proposition}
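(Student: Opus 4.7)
The plan is to apply Lemma~\ref{lem:zeta_classes} to recognise that the sum on the left equals $\order{\Out T}^{n-1} \zeta_{G,T}(n+1)$, so that the proposition becomes
\[
\zeta_{G,T}(n+1) < \frac{\alpha^*(T)^{-n}}{\order{\Out T}^{n-1}}.
\]
Since $\alpha^*$ is defined piecewise, I would proceed by case analysis on the isomorphism type of $T$, using the classification of finite simple groups. The central structural input is that every $M \in \M(G,T)$ satisfies $MT = G$, so $\order{G:M} = \order{T:M\cap T}$; consequently both the minimum index and the number of $G$-conjugacy classes in $\C(G,T)$ can be read off from the classification of maximal subgroups of the almost simple groups with socle $T$, together with the action of $G/T$ on $T$-conjugacy classes.

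Within each case I would first isolate the contribution of a single minimum-index class $M_0$: a direct check shows that $\order{\Out T}^{n-1} \order{G:M_0}^{-n}$ is already well below $\tfrac{1}{2}\alpha^*(T)^{-n}$, because in each regime the ratio $\order{G:M_0}/\alpha^*(T)$ is enormous and easily absorbs the $\order{\Out T}^{n-1}$ factor. The tail, i.e.\ the sum over the remaining classes, is then handled by crude estimates: the next-smallest index is typically a power of $\order{G:M_0}$, and the number of conjugacy classes grows only polynomially in the natural ``size parameter'' of $T$.

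For $T = A_m$, the O'Nan--Scott theorem gives that the point stabiliser has index $m$, every other maximal subgroup has index at least $\binom{m}{2}$, the number of conjugacy classes of maximal subgroups is polynomial in $m$, and $\order{\Out T} \leq 4$ while $\alpha^*(T) = m/4$; the bound follows by direct estimation, with the small values of $m$ handled individually. For $T$ sporadic or ${}^2F_4(2)'$, $\alpha^*(T) = 2$ and the list is finite, so the bound reduces to a finite verification using tabulated maximal-subgroup data (e.g.\ the ATLAS). For $T$ of Lie type of rank $r$ over $\F_q$, the minimum index of a maximal subgroup is of order $q^r$, which dwarfs $\alpha^*(T) = q^{r/30}$; I would use the Aschbacher--Kleidman--Liebeck classification for classical groups and the known maximal-subgroup lists for exceptional groups to control both the number of classes and their indices.

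The main obstacle is the Lie type case, where uniformity across all families and ranks is required and where the Aschbacher class $\mathcal{S}$ of almost-simple maximal subgroups does not admit a clean parametrisation. The generous exponent $1/30$ in $\alpha^*(T) = q^{r/30}$ appears calibrated precisely so that polynomial-in-$r$-and-$\log q$ bounds on the number of classes (in the spirit of Liebeck--Martin--Shalev's analysis of $\zeta_T(s)$) suffice to absorb the $\order{\Out T}^{n-1}$ factor; nevertheless, careful attention to low-rank classical groups (where parabolics need not realise the minimum index) and to exceptional groups of Lie type will, I expect, make up the bulk of the verification.
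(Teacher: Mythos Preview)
Your plan is sound and runs largely parallel to the paper's proof: both proceed by case analysis over the isomorphism type of~$T$ using the classification, and both draw on the same maximal-subgroup data (the paper explicitly follows \cite{MenezesQuickRoneyDougal13}, with \cite{Craven23} and \cite{Hasa14} for the exceptional and high-rank classical cases).  The one genuine simplification you miss is that the paper reduces the entire statement to a single inequality independent of~$n$.  Since $\sum_i a_i^{\,n} \leq \bigl(\sum_i a_i\bigr)^{n}$ for nonnegative reals, it suffices to verify
\[
\sum_{M \in \C(G,T)} \order{G:M}^{-1} < \bigl(\alpha^*(T)\,\order{\Out{T}}\bigr)^{-1},
\]
which immediately yields the bound for all~$n$ at once.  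This eliminates your ``leading term plus tail, each below half'' bookkeeping and turns the verification into one numerical check per~$T$; in the handful of cases where that single inequality is too tight (the paper singles out $A_6$, $S_6$ and socle $\PSL_3(4)$), a mild variant using the $\ell^2$-sum instead is invoked---and these are precisely the cases where your half--half split would also become delicate.  Otherwise your outline (minimum-index subgroups dominate, polynomial bounds on the number of classes, low-rank and sporadic cases handled by direct computation) matches the paper's execution.
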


\begin{proof}
It suffices to prove that
\begin{equation} \label{eq:bound}
\sum_{M \in \C(G,T)} \order{G:M}^{-1} < \bigl(\alpha^*(T) \, \order{\Out{T}}\bigr)^{-1},
\end{equation}
since this implies that
\[
\sum_{M \in \C(G,T)} \frac{\order{\Out{T}}^{n-1}}{\order{G:M}^n} \leq
\order{\Out{T}}^{n-1} \biggl( \sum_{M \in \C(G,T)} \order{G:M}^{-1} \biggr)^{n}
< \alpha^*(T)^{-n},
\]
as required.
In most cases, we will show that \eqref{eq:bound} holds. 
Occasionally, it will be convenient to prove the weaker result that
\begin{equation} \label{eq:weak_bound}
\sum_{M \in \C(G,T)} \order{G:M}^{-1} < \alpha^*(T)^{-1} \qquad\text{and}\qquad \sum_{M \in \C(G,T)} \order{G:M}^{-2} < \bigl(\alpha^*(T) \, \order{\Out{T}}\bigr)^{-2},
\end{equation}
which is also easily seen to be sufficient.
We follow \cite{MenezesQuickRoneyDougal13}, where it is proved, amongst other things, that in most cases $P_2(G, T) \leq \sum_{M \in \C(G,T)} \order{G:M}^{-1} < 1/10$.

First assume that $T$ is a sporadic group or the Tits group ${}^2F_4(2)'$.
Then $\order{\Out{T}} \leq 2$ and \eqref{eq:bound} can be verified in \textsf{GAP} \cite{GAP} using the Character Table Library \cite{CTblLib}, as described in \cite[\S1]{code}.  

Next assume that $T$ is the alternating group $A_m$ for $m \geq 5$.
Inspecting the proof of \cite[Theorem~1.1]{MarotiTamburini11}, we see that if $m \geq 13$, then 
\[
\sum_{M \in \C(G,T)} \order{G:M}^{-1} < \frac{1}{m} + \frac{13}{m^2} \leq \frac{2}{m} = (\alpha^*(T) \, \order{\Out{T}})^{-1}.
\]
If $5 \leq m \leq 12$, then we use \textsc{Magma} \cite{Magma} to verify \eqref{eq:weak_bound} as explained in \cite[\S\S2.1--2.2]{code}, unless $G$ is $A_6$ or $S_6$.
If $G$ is $A_6$ or $S_6$, then 
\[
\sum_{M \in \C(G,T)} \frac{1}{\order{G:M}^{n}} = \frac{2}{6^n} + \frac{1}{10^n} + \frac{2}{15^n} < \frac{4}{6^n} = 4^{-(n-1)} \, \left(\frac{6}{4}\right)^{-n} = \order{\Out{T}}^{-(n-1)} \, \alpha^*(T)^{-n},
\]
which establishes the desired bound.

Now assume that $T$ is an exceptional group of Lie type over $\F_{q}$ where $q = p^f$ and $p$ is prime.
For the following low rank groups
\[
{}^2B_2(q) \ (q \geq 8), \quad {}^2G_2(q) \ (q \geq 27), \quad G_2(q) \ (q \geq 3), \quad {}^2F_4(q) \ (q \geq 8), \quad {}^3D_4(q) \ (q \geq 2),
\]
complete maximal subgroup information was available to \cite{MenezesQuickRoneyDougal13}, and the information in \cite[Table~2]{MenezesQuickRoneyDougal13} is sufficient to verify \eqref{eq:bound}.
For example, if $T = {}^2B_2(q)$, then, as in \cite[p.~376]{MenezesQuickRoneyDougal13}, $G$ has at most $\log{q}$ classes of maximal subfield subgroups, each of index exceeding $q^3$, and at most $4$ remaining classes of core-free maximal subgroups, each of index exceeding $q^2$, which gives
\[
\sum_{M \in \C(G,T)} \order{G:M}^{-1} < \frac{4}{q^2} + \frac{\log{q}}{q^3} \leq \frac{1}{q} \leq \frac{1}{q^{1/4}\log{q}} \leq \frac{1}{q^{1/4} f} \leq (\alpha^*(T) \, \order{\Out{T}})^{-1}.
\]
We adopt a similar approach for the medium rank groups $F_4(q)$ and $E^\pm_6(q)$, using the recent classification of maximal subgroups in \cite{Craven23}, which was not available to \cite{MenezesQuickRoneyDougal13}.
More precisely, it is sufficient to take the description of the maximal subgroups in the class $\K$ in \cite[Table~3]{MenezesQuickRoneyDougal13} and allow for an additional $8$ (respectively, $27$) classes of subgroups if $T = F_4(q)$ (respectively $T = E^\pm_6(q)$).
For example, if $T = E_6(q)$, then,
\[
\sum_{M \in \C(G,T)} \order{G:M}^{-1} < \frac{33 + 6\log{q} + 27}{q^{16}} < \frac{1}{q^{6/30} 6\log{q}} \leq (\alpha^*(T) \, \order{\Out{T}})^{-1}.
\]
For the large rank groups $E_7(q)$ and $E_8(q)$, we use the bounds in \cite[pp.~378 \&~380]{MenezesQuickRoneyDougal13}.
For example, if $T = E_8(q)$, then
\[
\sum_{M \in \C(G,T)} \order{G:M}^{-1} < \frac{50 + \log{q}}{q^{57}} + \frac{360q^{56}\log{q}\log{(12q^{56}\log{q})}}{7\order{\POm^+_{16}(q)}} < \frac{1}{q^{8/30}\log{q}} \leq (\alpha^*(T) \, \order{\Out{T}})^{-1}.
\]

Finally assume that $T$ is a classical group over $\F_{q}$ with natural module $V = \F_{q^u}^n$. 
For now consider the large rank groups
\[
\PSL_n(q) \ (n \geq 6), \quad \PSU_n(q) \ (n \geq 8), \quad \PSp_n(q) \ (\text{even $n \geq 8$}), \quad \POm^\epsilon_n(q) \ (n \geq 9).
\]
Following \cite[Section~4]{MenezesQuickRoneyDougal13}, we note that by \cite[Theorem~4.1]{Liebeck85}, each core-free maximal subgroup of $G$ is either one of at most
$m_1(G)$ known possibilities or has order strictly less than $q^{3un}$, where $u = 2$ if~$T$ is unitary and $u = 1$ otherwise, and in \cite[p.~386]{MenezesQuickRoneyDougal13} it is shown that
\[
m_1(G) \leq \left\{
\begin{array}{ll}
\tfrac{5}{2}n + 12n^{1/2} + 9 \log{n} + \log\log{q} + 12 & \text{if $T = \POm^+_n(q)$} \\
6n + \tfrac{1}{3} n\log{n} + n\log\log{q}                & \text{otherwise.}
\end{array}
\right.
\]
Moreover, by \cite[Theorem~1.1]{Hasa14}, the total number of maximal subgroups of $G$ is
\[
m(G) \leq 2n^{5.2} + n\log\log{q}.
\]
Therefore,
\[
\sum_{M \in \C(G,T)} \order{G:M}^{-1} < \frac{m_1(G)}{\rho(T)} + \frac{m(G)q^{3un}}{\order{T}},
\]
where $\rho(T)$ is the minimal index of a core-free subgroup of $T$.
Upper bounds on $\rho(T)$ are given in \cite[Table~5.2.A]{KleidmanLiebeck90}.
The authors of \cite{MenezesQuickRoneyDougal13} note that there are a couple of (unspecified) corrections to these bounds due to Bray, and consulting \cite[Table~2.7]{Menezes13} we see that these are
\[
\begin{array}{ll}
\rho(\PSU_n(2)) \leq \tfrac{1}{3}2^{n-1}(2^n-1)      & \text{if $n = 2m \geq 6$} \\[7pt]
\rho(\POm^+_{2m}(3)) \leq \tfrac{1}{2}3^{m-1}(3^m-1) & \text{if $n = 2m \geq 8$.}
\end{array}
\]
Combining these observations, it is easy to verify \eqref{eq:bound}, unless
$T$ is one of the following groups
\[
\PSL_6(2), \quad \PSU_8(2), \quad \PSp_8(2), \quad \PSp_8(3), \quad \Omega_9(3), \quad \POm^+_{10}(2), \quad \POm^-_{10}(2),
\]
in which case we use \textsc{Magma} (see \cite[\S2.3]{code}).

It remains to consider the low rank classical groups
\[
\PSL_n(q) \ (2 \leq n \leq 5), \quad \PSU_n(q) \ (3 \leq n \leq 7), \quad \PSp_n(q) \ (\text{even $4 \leq n \leq 6$}), \quad \POm_7(q), \quad \POm^\pm_8(q).
\]
Complete maximal subgroup information can be found in \cite{BrayHoltRoneyDougal13}, so these are dealt with just like the low rank exceptional groups. 
In particular, with the information in \cite[Table~4]{MenezesQuickRoneyDougal13} (and analogous bounds for $\PSL_5(q)$ and $\PSU_7(q)$ derived from the information in \cite[Tables~8.18, 8.19, 8.37 and~8.38]{BrayHoltRoneyDougal13} and \cite[Table~5.2.A]{KleidmanLiebeck90}), we verify \eqref{eq:bound} unless $T$ is one of the following groups
\begin{gather*}
\PSL_2(q) \ (7 \leq q \leq 8), \quad
\PSL_3(q) \ (3 \leq q \leq 16), \quad
\PSL_4(q) \ (3 \leq q \leq 5), \quad
\PSL_5(2), \\
\PSU_3(q) \ (3 \leq q \leq 8), \quad
\PSU_4(q) \ (q \leq 3), \quad
\PSU_5(2), \quad
\PSU_6(2), \quad
\PSp_6(2), \quad
\POm^+_8(q) \ (q \leq 3).
\end{gather*}
For these groups, we use \textsc{Magma} to verify \eqref{eq:bound}, except for when $T = \PSL_3(4)$, when we verify \eqref{eq:weak_bound} instead (see \cite[\S2.3]{code}).
\end{proof}

\subsection{Groups with a nonabelian unique minimal normal subgroup}
\label{ss:zeta_nonabelian}

We now consider the general case where $G$ is a group with a nonabelian unique minimal normal subgroup $N$.
This means that there is a nonabelian simple group $T$ such that
\[
N = T_{1} \times \dots \times T_{n}
\]
where $T_{i} \cong T$ for each $i$.  
Let us fix some notation that we will adopt for the rest of this section.
For each~$i$, let $\pi_{i} \colon N \to T$ be the projection map $(x_{1},\dots,x_{n}) \mapsto x_{i}$.  
Let $M \in \C(G, N)$, noting that $G = MN$.
Since $N$~is the minimal normal subgroup of~$G$, the action of~$G$ on~$N$ permutes the direct factors $T_{1}$, \dots,~$T_{n}$ transitively.  It follows that $M$~also permutes these factors transitively.  
Write $H_{i} = (M \cap N)\pi_{i}$, for each $i$, and view $H_{i}$ as a subgroup of the direct factor~$T_{i}$.

\begin{lemma}
\label{lem:MmeetN}
Adopt the notation above.
\begin{enumerate}
\item \label{i:M=Norm}
If $M \cap N \neq 1$, then $M = \Norm{G}{M \cap N}$.
\item \label{i:ProjectionsConjugate}
If $g \in M$ satisfies $T_{i}^{g} = T_{j}$ for some $i$~and~$j$, then $H_{i}^{g} = H_{j}$.  
In particular, the subgroups~$H_{i}$ are images of each other under automorphisms of~$T$.
\end{enumerate}
\end{lemma}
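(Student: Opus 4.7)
The plan is to prove the two parts separately, each as a short and essentially self-contained argument using the defining properties of $G$, $M$ and $N$.

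For part~\ref{i:M=Norm}, I would first note that $M \leq \Norm{G}{M \cap N}$ holds automatically, since $M$ normalizes itself and $N \normal G$. The task is to rule out proper containment, and here I would invoke the maximality of $M$: if $M < \Norm{G}{M \cap N}$, then $\Norm{G}{M \cap N} = G$, so that $M \cap N$ is a normal subgroup of $G$ contained in $N$. Because $M \in \M(G, N)$ we have $N \nleq M$, so $M \cap N$ is a \emph{proper} subgroup of~$N$. Since $N$ is a minimal normal subgroup of $G$, this would force $M \cap N = 1$, contradicting the hypothesis. Hence $M = \Norm{G}{M \cap N}$.

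For part~\ref{i:ProjectionsConjugate}, I would begin from the observation that every $g \in M$ normalizes both~$M$ and~$N$, and therefore $(M \cap N)^g = M \cap N$. Writing a typical element of~$N$ in the form $x = x_{1} \cdots x_{n}$ with $x_{k} \in T_{k}$, conjugation by $g$ maps $x_{k} \in T_{k}$ to $x_{k}^{g} \in T_{k}^{g}$, so if $T_{i}^{g} = T_{j}$ then the $\pi_{j}$\nbd projection of $x^{g}$ coincides with $(x\pi_{i})^{g}$, where $g$ is now interpreted as the isomorphism $T_{i} \to T_{j}$ induced by conjugation. Letting $x$ range over $M \cap N$ then gives
\[
H_{j} = (M \cap N)\pi_{j} = \set{(x\pi_{i})^{g}}{x \in M \cap N} = H_{i}^{g},
\]
which is the first claim. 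The ``in particular'' follows because $M$ permutes $T_{1},\dots,T_{n}$ transitively: for any pair $i,j$, some $g \in M$ satisfies $T_{i}^{g} = T_{j}$, and conjugation by such~$g$ restricts to an isomorphism $T_{i} \to T_{j}$ sending $H_{i}$ to $H_{j}$; identifying each $T_{k}$ with $T$ turns this into an automorphism of~$T$.

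Neither half presents a serious obstacle: part~\ref{i:M=Norm} is a standard maximality-versus-minimality squeeze, and part~\ref{i:ProjectionsConjugate} is a bookkeeping computation in a direct product. The only point that requires a little care is correctly interpreting conjugation as an isomorphism between distinct simple direct factors $T_{i}$ and $T_{j}$, so that the equality $(x^{g})\pi_{j} = (x\pi_{i})^{g}$ makes sense and is checked coordinate by coordinate.
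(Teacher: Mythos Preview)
Your proposal is correct and follows essentially the same approach as the paper. For part~\ref{i:M=Norm} the paper observes directly that $1 < M \cap N < N$ forces $\Norm{G}{M \cap N}$ to be proper, while you phrase the same squeeze as a contradiction; for part~\ref{i:ProjectionsConjugate} the paper writes out the permutation~$\sigma$ and automorphisms~$\alpha_{i}$ explicitly to verify $(x^{g})\pi_{j} = (x\pi_{i})^{g}$, which is exactly your coordinate computation, and then uses $(M \cap N)^{g} = M \cap N$ just as you do.
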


\begin{proof} \
\begin{enumerate}
\item
Suppose that $M \cap N \neq 1$.  
Since $G = MN$, certainly $N \nleq M$.  
Therefore, $1 < M \cap N < N$, so $\Norm{G}{M \cap N}$~is a proper subgroup of~$G$ that contains the maximal subgroup~$M$.  
Therefore, $M = \Norm{G}{M \cap N}$.

\item 
Since $g$~induces an automorphism of~$N$, there exists a permutation~$\sigma$ in~$\Sym{n}$ and automorphisms $\alpha_{1}$, \dots,~$\alpha_{n}$ of~$T$ such that if $(x_{1},\dots,x_{n}) \in N$ then
\[
(x_{1},\dots,x_{n})^{g} =
(x_{1\sigma^{-1}}^{\alpha_{1\sigma^{-1}}}, \dots, x_{n\sigma^{-1}}^{\alpha_{n\sigma^{-1}}}).
\]
In particular, $i\sigma = j$.  
Let $x = (x_{1},\dots,x_{n})$ be an arbitrary element of~$M \cap N$.  
Then $x^{g}$~is also in~$M \cap N$ and
\[
(x^{g})\pi_{j} = (1,\dots,1,x_{i}^{\alpha_{i}},1,\dots,1)
\]
where the element~$x_{i}^{\alpha_{i}}$ appears in the $j$th coordinate.  
The latter is also the image of the element $(1,\dots,1,x_{i},1,\dots,1)$ under conjugation by~$g$.  
Hence
\[
H_{i}^{g} = \bigl( (M \cap N)^{g} \bigr)\pi_{j} = (M \cap N)\pi_{j} = H_{j},
\]
as claimed.
\end{enumerate}
\end{proof}

\begin{proposition}
\label{prop:zeta_nonabelian}
Let $G$ be a finite group with a unique minimal normal subgroup $N \cong T^{n}$ where $T$~is a nonabelian simple group.
Let $d \geq 2$ be an integer.
Then
\[
\sum_{M \in \C(G, N)} \order{G : M}^{-(d-1)} \leq \alpha(T)^{-n(d-1)}.
\]
\end{proposition}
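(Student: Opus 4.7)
The plan is to partition $\C(G,N)$ into three subsets according to the structure of $L := M \cap N$, and to bound each contribution separately. By Lemma~\ref{lem:MmeetN}\ref{i:ProjectionsConjugate}, the projections $H_i = L\pi_i$ are all in a single orbit of $\Aut(T)$; in particular, either every $H_i = T$ or every $H_i < T$. Combined with the dichotomy $L = 1$ versus $L \neq 1$, this gives three cases: (1) $L = 1$, so $M$ is a complement to $N$ in $G$; (2) $L \neq 1$ and $H_i = T$ for all $i$, the ``diagonal'' case; and (3) $L \neq 1$ and $H_i < T$ for all $i$, the ``product action'' case. In cases~(2) and~(3) Lemma~\ref{lem:MmeetN}\ref{i:M=Norm} gives $M = \Norm{G}{L}$, so distinct $M$ correspond to distinct $L$.

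\emph{Cases (1) and (2).} These follow the strategy of Detomi--Lucchini~\cite{DetomiLucchini13}. For case~(1) every complement has $\order{G:M} = \order{N} = \order{T}^n$, so the contribution is the number of conjugacy classes of complements divided by $\order{T}^{n(d-1)}$; standard bounds on $\Cohom{G/N,N}$ (or direct complement-counting for nonabelian chief factors) show this is at most a small power of $\order{T}^n$, comfortably absorbed by the $\tfrac{1}{2}\order{T}^{3/8}$ term in $\alpha(T)$. For case~(2), $L$ is a subdirect product of $T_1 \times \dots \times T_n$ and so is determined by an $M$-invariant partition of $\{1,\dots,n\}$ into blocks of some common size $k \geq 2$ together with twisting automorphisms; one then has $\order{L} = \order{T}^{n/k}$ and hence $\order{G:M} \geq \order{T}^{n - n/k} \geq \order{T}^{n/2}$, while the number of $G$-classes of such $L$ is controlled by the partition data and automorphism twists. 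Summing a geometric series over $k$ again yields a bound of the required shape.

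\emph{Case (3).} This is the main case and requires the full strength of Proposition~\ref{prop:zeta_almost_simple}. Let $\{B_1,\dots,B_m\}$ be the (unique) $M$-invariant partition such that $L$ decomposes as a direct product of subdirect subgroups $L_j \leq \prod_{i \in B_j} H_i$, with each $L_j \cong H$ for some proper subgroup $H < T$ common (up to $\Aut(T)$) to all blocks. Then $\order{G:M} \geq \order{T : H}^n$, because $M$ is contained in the normalizer of $L$, which lies inside a natural wreath-product overgroup. To count classes, I would fix the block system and the $\Aut(T)$-class of $H$, observe that $H$ extends to a maximal subgroup $M_H \in \C(\overline{G}, T)$ of the almost simple group $\overline{G} := \Norm{G}{T_1}/\Cent{G}{T_1}$, and verify that the number of $M$-classes with these data contributes at most $\order{\Out{T}}^{n-1}$ per choice of $M_H$ (one factor of $\Out{T}$ per coordinate beyond the first, because the twist on any one coordinate is pinned down by $M_H$). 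Summing over $H$ and over block sizes, the contribution is then dominated by
\[
\Bigl(\,\sum_{M_H \in \C(\overline{G}, T)} \frac{\order{\Out{T}}^{n-1}}{\order{\overline{G}:M_H}^{n(d-1)}}\Bigr) \cdot (\text{partition factor}),
\]
which by Proposition~\ref{prop:zeta_almost_simple} (applied with exponent $n(d-1)$) is at most $\alpha^*(T)^{-n(d-1)}$, and hence at most $\alpha(T)^{-n(d-1)}$.

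\emph{Combining.} The three contributions are summed and compared with $\alpha(T)^{-n(d-1)}$, whose definition~\eqref{eq:f} is designed precisely to absorb cases~(1) and~(2) through the $\tfrac{1}{2}\order{T}^{3/8}$ summand and case~(3) through the $\alpha^*(T)$ summand; the $\tfrac{61}{60}$ factor leaves enough slack to accommodate the partition and twist multiplicities.

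\emph{Main obstacle.} Case~(3) is where essentially all of the work lies. The difficulty is twofold: first, organizing the product-action subgroups $L$ and their normalizers $M$ into a form where a single almost-simple zeta bound can be applied coordinate-wise; second, ensuring that the number of $M$-classes above a fixed choice of $M_H$ grows no faster than $\order{\Out{T}}^{n-1}$, which is exactly the slack provided by Proposition~\ref{prop:zeta_almost_simple}. Both points require the transitivity of $M$ on $\{T_1,\dots,T_n\}$ together with Lemma~\ref{lem:MmeetN}\ref{i:ProjectionsConjugate} to transfer per-coordinate data to a single almost simple computation.
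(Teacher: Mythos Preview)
Your three-way split matches the paper's, and the intended role of Proposition~\ref{prop:zeta_almost_simple} in the product-action case is exactly right. But there are two genuine gaps in your Case~(3) and one in your Case~(1).

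\textbf{Case~(3): there is no block structure.} You assume $L = M \cap N$ may be a subdirect product over blocks $B_1,\dots,B_m$, and carry a ``partition factor'' into your final estimate. In fact the paper's key opening move in this case is to show that $L$ equals the full direct product $H_1 \times \dots \times H_n$. The argument is short but essential: set $H = H_1 \times \dots \times H_n$; then $M$ normalizes $H$ (by Lemma~\ref{lem:MmeetN}\ref{i:ProjectionsConjugate}), and maximality of $M$ forces $H \leq M$, since $G = HM$ would make $H$ normal in $G$, contradicting $1 < H < N$. Without this, your partition factor is $>1$ and Proposition~\ref{prop:zeta_almost_simple} leaves \emph{no} slack to absorb it: the $\tfrac{61}{60}$ in~\eqref{eq:f} is allocated entirely to the diagonal and complement contributions.

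\textbf{Case~(3): maximality of $M_H$ in $A$.} You ``observe'' that $H_1$ extends to a maximal subgroup of the almost simple group $A = \Norm{G}{T_1}/\Cent{G}{T_1}$. This is not automatic: $H_1$ is only known to be a proper subgroup of $T$. The paper obtains maximality by building a permutation isomorphism from the action of $N$ on cosets of $M$ to the product action of $T^n$ on $\Delta^n$ (with $\Delta$ the cosets of $H_1$ in $T$), invoking \cite[(2.2)]{Kovacs89} to conjugate $G$ into $B \wr S_n$ for a suitable $B \leq \Norm{\Sym\Delta}{T}$, and deducing that $B$ acts primitively on $\Delta$; the point stabilizer $B_\delta$ is then the desired maximal subgroup with $B_\delta \cap T = H_1$. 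This is the ``intricate analysis of product actions'' the introduction refers to, and you will need it (or an equivalent) to make the counting go through.

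\textbf{Case~(1): cohomology does not apply.} For nonabelian $N$ the complements are not parametrized by $\Cohom{G/N,N}$. The paper instead identifies this case as the twisted-wreath case of O'Nan--Scott and cites \cite[Proposition~2.16]{JaikinZapirainPyber11} to get $\order{\C_3} \leq n^2 \order{\Out T}$; together with $\order{G:M} = \order{T}^n$ this yields $n^2\order{\Out T}/\order{T}^{n(d-1)}$, which is then merged with the diagonal bound using $\order{\Out T} \leq \order{T}^{1/4}$ to produce the $\tfrac{61}{60}(\tfrac12\order{T}^{3/8})^{-n(d-1)}$ term.
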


\begin{proof}
Adopt the notation from above.
For each $M \in \C(G, N)$, by Lemma~\ref{lem:MmeetN}\ref{i:ProjectionsConjugate}, the subgroups $H_{i} = (M \cap N)\pi_{i}$ are images of each other under automorphisms of~$T$, yielding three possibilities:
\begin{enumerate}[(1),itemsep=0pt]
\item $H_{i} = T$ for each~$i$
\item $1 < H_{i} < T$ for each~$i$
\item $H_{i} = 1$ for each~$i$.
\end{enumerate}
For $i \in \{1,2,3\}$, write~$\C_{i}$ for the set of subgroups $M \in \C(G, N)$ that satisfy the condition~($i$) above, so $\C(G, N) = \C_{1} \cup \C_{2} \cup \C_{3}$.  
Note that $\C_{1}$ and $\C_{3}$ are empty when $n = 1$.
We shall consider each case below.  
Cases~1 and~3 are quicker to handle, so we present them first.
In these two cases, our proof closely follows the arguments in \cite[Lemmas~3.5 and~3.6]{DetomiLucchini13}.
Case~2 requires a different argument and, here, we will rely heavily on the bound obtained in Proposition~\ref{prop:zeta_almost_simple}.

\paragraph{Case~1} 
  If $M \in \C_{1}$, then $M \cap N$~is a
  subdirect product of copies of the nonabelian simple group~$T$.
  There is therefore a partition of $\Delta = \{1,\dots,n\}$ into
  disjoint subsets $\Delta = \Delta_{1} \cup \dots
  \cup \Delta_{k}$ such that
  \[
  M \cap N = D_{1} \times \dots \times D_{k}
  \]
  where each~$D_{j}$ is a diagonal subgroup of the product $\prod_{i
    \in \Delta_{j}} T_{i}$.  These diagonal subgroups~$D_{j}$ are the
  minimal normal subgroups of~$M \cap N$ and hence they are permuted
  by~$M$ in its conjugation action on~$M \cap N$.  It follows that
  each~$\Delta_{j}$~is a block for the induced action~$M$ on the
  set~$\Delta$ and therefore also for~$G = MN$.  Thus the partition
  $\{ \Delta_{1}, \dots, \Delta_{k} \}$ is a block system
  for the action of~$G$ on~$\Delta$.  Observe also that
  \[
  \order{G:M} = \order{MN:M} = \order{N: M \cap N} = \order{T}^{n-k}.
  \]
  We now bound the number of conjugacy classes of maximal
  subgroup~$M$ arising in this form.

  Fix some partition of~$\Delta$ into blocks for the action of~$G$;
  say, $\Delta = \Delta_{1} \cup \dots \cup
  \Delta_{k}$.  The diagonal subgroup~$D_{j}$ corresponding to the
  block~$\Delta_{j}$ has the form
  \[
  D_{j} = \set{ (x,x^{\beta_{2}},\dots,x^{\beta_{r}}) }{ x \in T }
  \]
  (identified with a subgroup of~$T^{\Delta_{j}}$), where $\beta_{2}$,
  \dots,~$\beta_{r}$ are automorphisms of~$T$.  Note that if
  $\beta_{i} \equiv \beta'_{i} \pmod{\Inn{T}}$ for $i = 2$,
  \dots,~$r$, then the corresponding diagonal subgroups
  $D_{j}$~and~$D'_{j}$ are conjugate via some element of~$N$.  Hence,
  the number of conjugacy classes of subgroups of~$G$ of the form
  \begin{equation}
    D_{1} \times \dots \times D_{k},
    \label{eq:Case1-intersection}
  \end{equation}
  for this specific block system, is bounded above
  by~$\order{\Out{T}}^{n-k}$.  Since $M = \Norm{G}{M \cap N}$ is
  determined by the intersection~$M \cap N$, the same bound applies to
  the number of conjugacy classes of maximal subgroups~$M$ that have
  intersection~$M \cap N$ has the form~\eqref{eq:Case1-intersection}
  for the specified partition of~$\Delta$.  Let us
  write~$\C_{1}(\Delta_{1})$ for the members of~$\C_{1}$ that arise
  from a block system with~$\Delta_{1}$ as one of the blocks.  Then,
  as $\order{G:M} = \order{T}^{n-k}$,
  \[
  \sum_{M \in \C_{1}(\Delta_{1})} \order{G:M}^{-(d-1)} \leq
  \left( \frac{\order{\Out{T}}}{\order{T}^{d-1}} \right)^{n-k} \leq
  \left( \frac{\order{\Out{T}}}{\order{T}^{d-1}} \right)^{n-2}.
  \]
  The number of choices for the block~$\Delta_{1}$ can be bounded
  above by~$2^{n}$ (that is, the number of subsets of~$\Delta$) and
  this block then determines the block system.  We therefore conclude
  that
  \[
  \sum_{M \in \C_{1}} \order{G:M}^{-(d-1)} \leq
  2^{n} \left( \frac{\order{\Out{T}}}{\order{T}^{d-1}} \right)^{n/2} =
  \left( \frac{4\,\order{\Out{T}}}{\order{T}^{d-1}} \right)^{n/2}.
  \]

\paragraph{Case~3} 
  Observe that $M \in \C_{3}$ precisely when
  $M$~is a maximal subgroup of~$G$ that complements the minimal normal
  subgroup~$N$.  Assume that there is at least one such $M \in
  \C_{3}$.  Then the core of~$M$ in~$G$ is trivial, so $G$~can be
  viewed as a primitive permutation group via its action on the cosets
  of~$M$.  According to the O'Nan--Scott Theorem, $G$~is isomorphic to
  a twisted wreath product $G \cong T \wr_{\Norm{G}{T_{1}},\phi} M$
  where the homomorphism $\phi \colon \Norm{G}{T_{1}} \to \Aut T$,
  arising via the conjugation action on the first factor~$T_{1}$
  of~$N$, has the property that $\Inn{T} \leq \im\phi$ and where
  $M$~acts faithfully and transitively on the collection
  $\{T_{1},\dots,T_{n}\}$ of factors of~$N$ (see Case~2(a) of
  the proof of the Main Theorem of~\cite{LiebeckPraegerSaxl88}).  In
  particular, $\order{G/N} \leq n!$ and each complement $M \in \C_{3}$
  is large in the sense of~\cite{JaikinZapirainPyber11}.  We
  apply~\cite[Proposition~2.16]{JaikinZapirainPyber11} to tell us that
  \[
  \order{\C_{3}} \leq \order{\Out{T}} \log {\order{G/N}} \leq
  n^{2} \, \order{\Out{T}}.
  \]
  Since $\order{G:M} = \order{N}$ for each $M \in \C_{3}$, we conclude
  that
  \[
  \sum_{M \in \C_{3}} \order{G:M}^{-(d-1)} \leq
  \frac{n^{2} \, \order{\Out{T}}}{\order{T}^{n(d-1)}}.
  \]

\paragraph{Case~2} 
  We first define~$A$ to be the image of the normalizer~$\Norm{G}{T_{1}}$ in $\Aut{T}$.  
  Thus $A$~is an almost simple group with socle~$T$ that is isomorphic to~$\Norm{G}{T_{1}}/\Cent{G}{T_{1}}$.
  Note that $A$~is fixed and is independent of any choice of maximal subgroup~$M$. 
  
  Now let $M \in \C_{2}$.  
  Our argument draws on, and runs parallel with, the proof of \cite[Theorem~4.6A]{DixonMortimer96}.  
  Write $H = H_{1} \times \dots \times H_{n}$.  
  By Lemma~\ref{lem:MmeetN}\ref{i:ProjectionsConjugate}, $M$~normalizes~$H$.  
  Since $M$~is maximal in~$G$, either $G = HM$ or $H \leq M$.  
  If it were the case that $G = HM$, then $G$~would also normalize~$H$, but this is a contradiction as $1 < H < N$ and $N$~is minimal normal. 
  Therefore, $H \leq M$, and it follows that $M \cap N = H = H_{1} \times \dots \times H_{n}$.

  We shall view~$G$ as a primitive group via its action on the
  set~$\Omega$ of cosets of the maximal subgroup~$M$.  We fix~$\omega
  \in \Omega$ such that the stabilizer~$G_{\omega}$ equals~$M$.  We
  shall also let $T$~act on the set~$\Delta$ of cosets of~$H_{1}$
  in~$T$ and so view $T$~as a subgroup of~$\Sym{\Delta}$.  We also fix
  $\delta \in \Delta$ such that the stabilizer~$T_{\delta}$
  equals~$H_{1}$.  We then let $T^{n}$~act on~$\Delta^{n}$ via the
  product action and hence $T^{n} \leq \Sym{\Delta^{n}}$.  Since the
  subgroups~$H_{i}$ are images of each other under automorphisms
  of~$T$, there is an isomorphism $N \to T^{n}$ which maps the
  subgroup $M \cap N = H_{1} \times \dots \times H_{n}$
  to~$H_{1}^{n}$.  As the latter subgroups are the stabilizers for the
  relevant actions, this therefore defines a permutation isomorphism
  from $N \leq \Sym{\Omega}$ to $T^{n} \leq
  \Sym{\Delta^{n}}$.  Hence, there is a bijection $\phi \colon \Omega
  \to \Delta^{n}$ such that this permutation isomorphism is given by
  $\phi^{\ast} \colon x \mapsto \phi^{-1}x\phi$ for $x \in T^{n}$.
  Note that $\phi$~maps our chosen point~$\omega \in \Omega$ to the
  sequence $(\delta,\dots,\delta)$
  in~$\Delta^{n}$.  Now $\phi^{\ast}$~is also an isomorphism
  $\Sym{\Omega} \to \Sym{\Delta^{n}}$ which maps the normalizer of~$N$
  in~$\Sym{\Omega}$ to the normalizer of~$T^{n}$
  in~$\Sym{\Delta^{n}}$.  The latter is equal to the wreath product $W
  = \Norm{\Sym{\Delta}}{T} \wr S_{n}$ (as is noted in
  \cite[Lemma~4.5A]{DixonMortimer96}).  Hence, $\phi^{\ast}$~maps~$G$
  to a primitive subgroup of~$W$.  Consequently, $W$~is also primitive
  in its action on~$\Delta^{n}$ and therefore the normalizer
  $\Norm{\Sym{\Delta}}{T}$ is primitive in its action on~$\Delta$.  In
  particular, \cite[Theorem~4.3B]{DixonMortimer96} tells us that
  $\Norm{\Sym{\Delta}}{T}$~is almost simple with socle~$T$.

  Let us now consider the image of the normalizer~$\Norm{G}{T_{1}}$
  under~$\phi^{\ast}$.  For simplicity of notation, we also
  write~$T_{1}$ for the first factor of~$T^{n}$ occurring as a
  subgroup of the wreath product~$W$.  Observe that $\Norm{W}{T_{1}}$
  decomposes as a direct product $\Norm{\Sym{\Delta}}{T} \times (
  \Norm{\Sym{\Delta}}{T} \wr S_{n-1})$ and hence there is a projection
  map $\mu \colon \Norm{W}{T_{1}} \to \Norm{\Sym{\Delta}}{T_{1}}$.
  Since $\Norm{\Sym{\Delta}}{T}$~is almost simple with socle~$T$, one
  deduces $\ker\mu = \Cent{W}{T_{1}}$ and hence
  $\Norm{G}{T_{1}}\phi^{\ast} \cap \ker\mu =
  \Cent{G}{T_{1}}{\phi^{\ast}}$.  It follows that
  $\phi^{\ast}\mu$~induces an isomorphism~$\alpha$ from $A =
  \Norm{G}{T_{1}}/\Cent{G}{T_{1}}$ to the image $B =
  \Norm{G}{T_{1}}\phi^{\ast}\mu$ in~$\Norm{\Sym{\Delta}}{T}$.
  We now
  apply~\cite[(2.2)]{Kovacs89} to produce an element $z \in
  (\Norm{\Sym{\Delta}}{T})^{n}$ such that $(G\phi^{\ast})^{z} \leq B
  \wr S_{n}$.  We deduce that the latter wreath product is also
  primitive in its action on~$\Delta^{n}$ and hence the action of~$B$
  on~$\Delta$ is primitive (again by
  \cite[Lemma~4.5A]{DixonMortimer96}).  Therefore, the
  stabilizer~$B_{\delta}$ is a maximal subgroup of~$B$.  By
  construction, $\phi^{\ast}$~maps $H_{1}$~and~$T_{1}$ occurring as
  subgroups of~$G$ to the corresponding subgroups of~$W$.  Hence
  $(T_{1}\Cent{G}{T_{1}})\phi^{\ast}\mu = T$ and
  $(H_{1}\Cent{G}{T_{1}})\phi^{\ast}\mu = H_{1}$.  Therefore, the
  isomorphism $\alpha \colon A \to B$ maps the subgroups
  $H_{1}$~and~$T$ in~$A$ to the corresponding subgroups of~$B$.  Since
  $T \cap B_{\delta} = T_{\delta} = H_{1}$, we conclude that $L =
  B_{\delta}\alpha^{-1}$ is a maximal subgroup of~$A$ such that $L
  \cap T = H_{1}$.

  We are now able to completely describe the maximal subgroup~$M$.  As
  noted in Lemma~\ref{lem:MmeetN}\ref{i:ProjectionsConjugate}, the
  other images $H_{2}$, \dots,~$H_{n}$ are images of~$H_{1}$ under
  automorphisms of~$T$; say, $H_{i} = H_{1}^{\alpha_{i}}$ where
  $\alpha_{i} \in \Aut{T}$.  Therefore, the intersection $M \cap N =
  H_{1} \times H_{2} \times \dots \times H_{n}$ is determined
  by~$H_{1}$ and these~$\alpha_{i}$ and, as $M = \Norm{G}{M \cap N}$,
  this then determines the maximal subgroup~$M$.

  Now suppose that $R$~is another maximal subgroup of~$G$ such that
  $J_{1} = (R \cap N)\pi_{1} = L^{x} \cap T$ for some $x \in A$.  Then
  $1 < J_{1} < T$, so by our above argument $R = \Norm{G}{J_{1}
    \times J_{2} \times \dots \times J_{n}}$ where each~$J_{i}$ is the
  image of~$J_{1}$ under some automorphism of~$T$.  We continue to use
  the map~$\phi^{\ast}$ determined above by the choice of maximal
  subgroup~$M$ and its related technology.  In particular, $x\alpha
  \in B$, so there exists $g \in G$ such that $g\phi^{\ast}\mu = x$.
  Consider the conjugate~$M^{g}$ of our maximal subgroup~$M$ by this
  element~$g$.  By construction, $(M \cap N)\phi^{\ast}\mu =
  H_{1}^{n}\mu = H_{1}$ and hence $(M^{g} \cap N)\phi^{\ast}\mu =
  H_{1}^{x} = J_{1}$.  Therefore, by the previous paragraph, there
  exist $\beta_{2},\dots,\beta_{n} \in \Aut{T}$ such that $M^{g} \cap
  N = J_{1} \times J_{2}^{\beta_{2}} \times \dots \times
  J_{n}^{\beta_{n}}$.  Thus the maximal subgroup~$R$ differs from the
  conjugate~$M^{g}$ by conjugation by a selection of
  $n-1$~automorphisms of~$T$.

  In conclusion, taking into account conjugation by elements of~$N$,
  the members $M \in \C_{2}$ are determined by a choice of
  representative for the conjugacy classes of maximal
  subgroups~$L$ of the almost simple group~$A$ (necessarily satisfying $T \nleq L$) and
  $n-1$~representatives in the outer automorphism group of~$T$.  
  Noting that $H_{1} = L \cap T$, we obtain
  \[
  \order{G:M} = \order{N:M \cap N} = \order{T:L \cap T}^{n} =
  \order{A:L}^{n}.
  \]
  Hence, we conclude that
  \[
  \sum_{M \in \C_{2}} \order{G:M}^{-(d-1)} \leq
  \sum_{L \in \C(A, T)} 
  \frac{\order{\Out{T}}^{n-1}}{\order{A:L}^{n(d-1)}}.
  \]
  Therefore, by Proposition~\ref{prop:zeta_almost_simple},
  \[
  \sum_{M \in \C_{2}} \order{G:M}^{-(d-1)} \leq \alpha^*(T)^{-n(d-1)}.
  \]
  
  We now bring the three cases together. 
  If $n = 1$, then
  \[
  \sum_{M \in \C(G,N)} \order{G:M}^{-(d-1)} \leq \alpha^*(T)^{-n(d-1)} \leq \alpha(T)^{-n(d-1)},
  \]
  as required.
  It remains to assume that $n \geq 2$, in which case,
  \[
  \sum_{M \in \C(G, N)} \order{G:M}^{-(d-1)}
  \leq \left( \frac{4\,\order{\Out{T}}}{\order{T}^{d-1}} \right)^{n/2}
  + \frac{n^{2} \, \order{\Out{T}}}{\order{T}^{n(d-1)}}
  + \alpha(T)^{-n(d-1)}.
  \]
  A similar calculation shows $n^{2} \leq (2\sqrt{60})^{n}/60 \leq (4\order{T})^{n(d-1)/2}/60$ and, by~\cite[Proposition~4.4]{LiebeckPyberShalev07}, $\order{\Out{T}} \leq \order{T}^{1/4}$.  Hence
  \[
  \frac{n^{2} \, \order{\Out{T}}}{\order{T}^{n(d-1)}} 
  \leq \tfrac{1}{60} \left( \frac{4\,\order{\Out{T}}}{\order{T}^{d-1}} \right)^{n/2} 
  \leq \tfrac{1}{60} \left( \frac{4\,\order{T}^{1/4}}{\order{T}^{d-1}} \right)^{n/2} 
  \leq \tfrac{1}{60} \left( \tfrac{1}{2} \order{T}^{3/8} \right)^{-n(d-1)}.
  \]
  Therefore, noting that $n(d-1) \geq 2$, we conclude
  \[
  \sum_{M \in \C(G, N)} \order{G:M}^{-(d-1)}
  \leq \alpha^*(T)^{-n(d-1)} + \tfrac{61}{60} \left( \tfrac{1}{2} \order{T}^{3/8} \right)^{-n(d-1)}
  \leq \alpha(T)^{-n(d-1)},
  \]
  as required.
\end{proof}

Theorem~\ref{thm:zeta} now follows by combining Lemma~\ref{lem:zeta_classes} with Propositions~\ref{prop:zeta_abelian} and~\ref{prop:zeta_nonabelian}.

\section{Groups with prescribed chief tail}
\label{s:proof}

\subsection{Widths of chief factors}
\label{ss:width}

The aim of this section is to prove the following result on the growth of the widths of chief factors in a prescribed chief tail.

\begin{theorem}
\label{thm:width}
Let $G$~be a finite group with a prescribed chief tail $\T = (N_0, \dots, N_\ell)$.
Assume that $\Frat{G} = 1$ and $\Frat{G/N_j} = 1$ for some $1 \leq j \leq \ell-1$, and let $j$ be maximal subject to this condition.
Write $N_{j-1}/N_{j} = S^{m}$ and $N_{\ell-1} = T^{n}$ where $S$~and~$T$ are simple.
\begin{enumerate}
\item 
\label{it:width_nn}
If $S$ and $T$ are nonabelian, then $n \geq pm$ where $p$ is the greatest prime divisor of $\order{S}$.
\item 
\label{it:width_an}
If $S = C_p$ and $T$ is nonabelian, then $n \geq \delta m$ where
\[
\delta = 
\begin{cases}
\frac{1}{2} & \text{if $p = 2$ and $T = {\rm P}\Omega_8^+(q)$ for some odd prime power~$q$,} \\
$1$         & \text{otherwise.}
\end{cases}
\]
\item 
\label{it:width_na}
If $S$ is nonabelian and $T = C_p$, then $n \geq tm$ where $t$ is smallest degree of a faithful projective representation of $S$ in positive characteristic.
\item 
\label{it:width_aa}
If $S$ and $T$ are abelian, then $n \geq m$ and, if $n \geq 5$, then $n \geq m + 1$.
\end{enumerate}
\end{theorem}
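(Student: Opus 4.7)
The plan is to exploit the faithful (or almost-faithful) action of~$G$ on its unique minimal normal subgroup $L = G_{\ell-1} = T^{n}$, available because $\Frat{G} = 1$. When $T$ is abelian, Lemma~\ref{lem:faithful} yields $\Cent{G}{L} = L$; when $T$ is nonabelian, $\Cent{G}{L}$ is normal in~$G$ and meets~$L$ trivially (since $\Centre{L} = 1$), and since $L$ is the unique minimal normal subgroup of~$G$ this forces $\Cent{G}{L} = 1$. Either way, $G/\Cent{G}{L}$ embeds in~$\Aut{L}$, which equals $\GL_{n}(p)$ when $T = C_{p}$ and $\Aut{T} \wr S_{n}$ when $T$ is nonabelian. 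The chief factor $S^{m} = G_{j-1}/G_{j}$ therefore appears as a section of $\Aut{L}$, and the task reduces to showing that this, together with the uniserial structure, forces the stated lower bound on~$n$.

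When $T = C_{p}$ (cases~\ref{it:width_na} and~\ref{it:width_aa}), so $\Aut{L} = \GL_{n}(p)$: in case~\ref{it:width_na} ($S$~nonabelian) I would follow the Feit--Tits strategy \cite{FeitTits78}, lifting the section~$S^{m}$ of~$\GL_{n}(p)$ via universal central extensions to a faithful projective representation of~$S^{m}$ in characteristic~$p$ of degree at most~$n$, and using that the commuting direct factors of~$S^{m}$ must act on independent $t$-dimensional summands to deduce $n \geq tm$. In case~\ref{it:width_aa} ($S = C_{q}$) I would bound the rank of the abelian $q$-section~$C_{q}^{m}$ of~$\GL_{n}(p)$: for $q \neq p$ via the maximal rank of a split torus, and for $q = p$ via the $G_{j-1}$-invariant filtration of~$L$ arising from the Frattini chief factors stacked between~$S^{m}$ and~$L$. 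The refinement $n \geq m+1$ when $n \geq 5$ comes from excluding the equality $m = n$ through a more detailed analysis of faithful irreducible $\F_{p}G$-modules in small dimension.

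When $T$ is nonabelian (cases~\ref{it:width_nn} and~\ref{it:width_an}) I would analyse via the projection $\pi \colon \Aut{T} \wr S_{n} \to S_{n}$ onto the permutation action on the $n$ simple factors of~$L$. Since $\ker\pi/\Inn{T}^{n} \cong \Out{T}^{n}$ is solvable by Schreier's conjecture, in case~\ref{it:width_nn} ($S$~nonabelian) the section~$S^{m}$ cannot lie in the base group, so its image in~$S_{n}$ must have $S$ as a composition factor of multiplicity at least~$m$, and a standard argument on multiplicities of nonabelian composition factors in permutation groups gives $n \geq m\mu(S)$, where $\mu(S)$ is the minimum faithful permutation degree of~$S$. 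Since the largest prime~$p$ of~$\order{S}$ divides $\mu(S)!$, we have $\mu(S) \geq p$, hence $n \geq pm$. In case~\ref{it:width_an} ($S = C_{p}$ abelian, $T$~nonabelian), $C_{p}^{m}$ is an elementary abelian $p$-section of $\Out{T} \wr S_{n}$, and a direct bound on its rank via the $p$-ranks of~$\Out{T}$ and of~$S_{n}$ gives $n \geq m$ in general, with the special value $\delta = 1/2$ for $T = \POm_{8}^{+}(q)$ and $p = 2$ reflecting the Klein-four subgroup of the triality-expanded outer automorphism group $\Out{T} \cong S_{4}$.

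The main obstacle will be case~\ref{it:width_na}: the sharp bound $n \geq tm$ on minimum degrees of faithful projective modular representations of~$S^{m}$ requires careful representation-theoretic input, complicated by the Frattini chief factors between~$S^{m}$ and~$L$ which may involve primes other than~$p$ and impose cohomological constraints on the intermediate extensions. Secondary technical points include verifying the inequality $\mu(S) \geq p$ uniformly across all finite simple groups in~\ref{it:width_nn}, isolating the $\POm^{+}_{8}$ exception cleanly in~\ref{it:width_an}, and ruling out $m = n \geq 5$ in~\ref{it:width_aa}.
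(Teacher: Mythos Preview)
Your overall framework --- embed $G/\Cent{G}{L}$ in $\Aut L$ and read off constraints on the chief factor $S^{m}$ --- is exactly the paper's, and parts~\ref{it:width_nn} and~\ref{it:width_an} are close in spirit.  Two differences are worth noting.  In~\ref{it:width_nn} the paper does not invoke $\mu(S)$: it observes that $C_{p}^{m}$ (with $p$ the largest prime of $\order{S}$) is a section of $S_{n}$ and applies the Kov\'acs--Praeger theorem directly.  In~\ref{it:width_an} your ``direct bound via the $p$-ranks of $\Out T$ and $S_{n}$'' is not sharp enough: the $p$-rank of $\Out T \wr S_{n}$ can exceed $n$, so you will not get $n \geq m$ this way.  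The paper instead uses uniseriality to force the chief factor $C_{p}^{m}$ to lie either entirely in the image in $S_{n}$ (Kov\'acs--Praeger again) or entirely inside a single layer $(H_{i-1}/H_{i})^{n}$ of a fixed chief-series filtration of $(\Out T)^{n}$; then Lemma~\ref{lem:rank_out} says $H_{i-1}/H_{i}$ is cyclic (or $C_{2}^{2}$ in the $\POm_{8}^{+}$ case), giving $m \leq n$ (or $m \leq 2n$).

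The substantive gaps are in the abelian-$T$ cases.  In~\ref{it:width_aa}, the split-torus bound only gives $m \leq n$ and cannot produce the strict inequality for $n \geq 5$; moreover the case $q = p$ never occurs, since Lemma~\ref{lem:no_normal_p-group} forces $\Op{G/N} = 1$ and Lemma~\ref{lem:frattini_chief_factors} pins the Frattini stack $G_{j}/N$ inside $\Op[q]{G/N}$ for some $q \neq p$.  The paper's engine here is Proposition~\ref{prop:abelian_abelian}: for a uniserial irreducible $H \leq \GL_{n}(p)$ and a chief factor $C_{q}^{s}$ inside a normal $q$-subgroup of $H$, one has $s \leq n$, with $s < n$ once $n \geq 5$.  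This is proved by a primitive/imprimitive reduction together with a Feit--Tits style bound (Lemma~\ref{lem:feit-tits}) on normal $q$-subgroups of primitive subgroups of $\PGL_{n}(\FF_{p})$.  In~\ref{it:width_na}, your ``commuting factors on independent $t$-dimensional summands'' picture is not how the bound comes out, and the Frattini stack $G_{j}/N$ genuinely obstructs a direct passage from $S^{m}$ to a degree-$n$ projective representation.  The paper first uses the Three Subgroups Lemma (exploiting that $G_{j-1}$ is perfect) to locate the first chief factor $G_{k-1}/G_{k}$ below $G_{j}$ on which $G_{j-1}$ acts nontrivially and to show $G_{j}/G_{k-1}$ is central in $G_{j-1}/G_{k-1}$; it then splits into $k < \ell$ (where Proposition~\ref{prop:abelian_abelian} bounds the width of $G_{k-1}/G_{k}$ and one obtains an irreducible projective representation of $S^{m}$ in degree $\leq n$) and $k = \ell$ (where Clifford theory on $\rho|_{G_{j-1}}$ together with \cite[Proposition~5.5.7]{KleidmanLiebeck90} gives $n \geq tm$).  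Your plan does not anticipate this two-step descent, and without it the argument does not close.
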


\begin{remark}
\label{rem:width}
There are two cases in Theorem~\ref{thm:width} where we do not assert that $n > m$, and Example~\ref{ex:width} shows that these are genuine exceptions.
Indeed, for part~\ref{it:width_an}, Example~\ref{ex:width}\ref{it:width_almost_simple} shows that it is possible to have $n = m/2$ when $p = 2$ and $T = \POm_8^+(q)$ for odd $q$.
Similarly, for part~\ref{it:width_aa}, Example~\ref{ex:width}\ref{it:width_affine} shows that it is possible to have $n = m = 4$.
\end{remark}

Before proving Theorem~\ref{thm:width} we will need a number of preliminary results.
We begin by recording a well-known lemma.

\begin{lemma}
\label{lem:no_normal_p-group}
Let $n$ be a positive integer, let $F$ be a field of positive characteristic $p$, let $G$ be a finite group and let $\rho \colon G \to \GL_n(F)$ be a faithful irreducible representation.
Then $\Op{G} = 1$.
\end{lemma}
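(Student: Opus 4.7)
The plan is to argue by contradiction using the classical fact that a finite $p$-group acting on a nonzero vector space over a field of characteristic $p$ must fix some nonzero vector.

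Suppose, for contradiction, that $P = \Op{G}$ is nontrivial, and view $V = F^{n}$ as a faithful irreducible $FG$\nbd module via~$\rho$. First, I would record that the only simple $FP$\nbd module is the trivial one, since $P$ is a finite $p$\nbd group and $F$ has characteristic $p$. Consequently, the fixed subspace
\[
V^{P} = \set{v \in V}{vx = v \text{ for all } x \in P}
\]
is nonzero: any composition series of $V$ as an $FP$\nbd module begins with a trivial submodule.

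Next, I would observe that $V^{P}$ is $G$\nbd invariant. Indeed, for any $v \in V^{P}$, $g \in G$ and $x \in P$, the element $g x g^{-1}$ lies in $P$ since $P \normal G$, so $v(gxg^{-1}) = v$, which rearranges to $(vg)x = vg$. Hence $vg \in V^{P}$, and $V^{P}$ is a nonzero $G$\nbd submodule of~$V$.

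By irreducibility, $V^{P} = V$, meaning that $P$ acts trivially on~$V$. Since $\rho$~is faithful this forces $P = 1$, contradicting our assumption. Therefore $\Op{G} = 1$. There is no real obstacle here, the only thing to be careful about is the direction of the conjugation when checking that $V^{P}$ is $G$\nbd stable.
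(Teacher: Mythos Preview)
Your proof is correct. It differs from the paper's in one respect: the paper invokes Clifford's Theorem to conclude that the restriction $\rho|_{\Op{G}}$ is completely reducible, and then observes that every irreducible $FP$\nbd module is trivial, so the restriction is trivial and faithfulness forces $\Op{G} = 1$. Your argument is more elementary: rather than citing Clifford, you exhibit the fixed-point space $V^{P}$ directly as a nonzero $G$\nbd submodule (using only normality of~$P$ and the fact that a finite $p$\nbd group in characteristic~$p$ has a nonzero fixed vector), and then irreducibility does the rest. The two routes are equally short; yours has the minor advantage of being self-contained, while the paper's phrasing makes the complete reducibility of the restriction explicit.
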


\begin{proof}
Since $\rho$ is irreducible, Clifford's Theorem implies that the restriction $\rho|_{\Op{G}}$ is completely reducible.
The only irreducible representations of a $p$-group in characteristic $p$ are trivial, so $\rho|_{\Op{G}}$ is trivial, but $\rho$ is faithful, so $\Op{G} = 1$.
\end{proof}

\begin{lemma}
\label{lem:rank_out}
Let $T$ be a nonabelian finite simple group. 
\begin{enumerate}
\item 
If $T \cong \POm^+_8(p^f)$ where $p$ is an odd prime number and $f \geq 1$, then $\Out{T}$ has a unique noncyclic chief factor and it is isomorphic to $C_2 \times C_2$.
\item 
Otherwise, every chief factor of\/ $\Out{T}$ is cyclic.
\end{enumerate}
\end{lemma}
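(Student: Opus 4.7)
The plan is to proceed case by case over the classification of finite simple groups, relying on the standard description of outer automorphism groups. When $T$ is alternating, $\Out(T)$ is $1$, $C_{2}$, or (only for $T = A_{6}$) $C_{2} \times C_{2}$, all abelian with chief factors cyclic of prime order. When $T$ is a sporadic simple group or the Tits group, $\order{\Out(T)} \leq 2$ and the conclusion is immediate.

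The bulk of the work concerns $T$ of Lie type over $\F_{q}$ with $q = p^{f}$. I would use the standard decomposition (see \cite[Section~2]{KleidmanLiebeck90}) expressing $\Out(T)$ as an extension $D \cdot \Phi \cdot \Gamma$, where $D$ is the diagonal automorphism group, $\Phi$ is a cyclic group of field automorphisms of order dividing $f$, and $\Gamma$ is the graph automorphism group, which is trivial, cyclic of order $2$, or (only when $T$ has type $D_{4}$) isomorphic to $S_{3}$. The subgroup $\Phi$ is always cyclic, and $D$ is cyclic in all cases except when $T$ is of type $D_{n}$ with $q$ odd, where $D$ can be isomorphic to $C_{2} \times C_{2}$. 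For such a $T$ with $\Gamma$ cyclic, even when $D \cong C_{2} \times C_{2}$, the image of $\Gamma$ in $\Aut(D) \cong S_{3}$ is cyclic, so $\Gamma$ is either trivial on $D$ or acts as a transposition; in either case there is a $\Gamma$-invariant cyclic subgroup of order $2$ in $D$. This yields a normal series of $\Out(T)$ with cyclic quotients, which refines to a chief series with all cyclic factors.

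The remaining case $T \cong \POm^{+}_{8}(q)$ splits on the parity of $q$. If $q$ is even then $D = 1$, so $\Out(T) = \Phi \cdot \Gamma$ with $\Phi$ cyclic and $\Gamma \cong S_{3}$; the chief series $1 < A_{3} < S_{3}$ of $\Gamma$, together with a chief series of $\Phi$, refines to a chief series of $\Out(T)$ with all cyclic factors. If $q$ is odd then $D \cong C_{2} \times C_{2}$, and the defining feature of triality is that $\Gamma \cong S_{3}$ permutes the three nontrivial elements of $D$ transitively, so $D$ is a minimal normal subgroup of $\Out(T)$ contributing a noncyclic chief factor isomorphic to $C_{2} \times C_{2}$. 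The quotient $\Out(T)/D \cong \Phi \cdot S_{3}$ has only cyclic chief factors by the even-characteristic analysis, so $\Out(T)$ has exactly one noncyclic chief factor.

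The main obstacle is the bookkeeping inherent in the case analysis, in particular verifying that whenever $D \cong C_{2} \times C_{2}$ and $\Gamma$ is cyclic the relevant section of $\Out(T)$ really does refine into cyclic chief factors; this ultimately reduces to the elementary observation that any cyclic subgroup of $\Aut(C_{2} \times C_{2}) = S_{3}$ fixes a nontrivial element, whereas $S_{3}$ itself acts irreducibly.
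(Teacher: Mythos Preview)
Your approach is essentially the paper's: a case analysis over the classification, using the $D \cdot \Phi \cdot \Gamma$ structure of $\Out(T)$ for groups of Lie type. The paper organizes things slightly differently by first reducing to the case where $\Out(T)$ is nonabelian (which cuts the Lie-type list down to $\PSL^{\epsilon}_{n}$ with $n \geq 3$, $\POm^{\epsilon}_{2m}$ with $m \geq 4$, and $E_{6}^{\epsilon}$) and then quoting the explicit isomorphism type of $\Out(T)$ in each surviving family (for instance $\Out(T) \cong D_{8} \times C_{f}$ for $\POm^{\epsilon}_{2m}$ with $m > 4$ and $\Out(T)$ nonabelian, and $\Out(T) \cong S_{4} \times C_{f}$ for $\POm^{+}_{8}(q)$ with $q$ odd), from which the chief factors can be read off directly. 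Your more uniform treatment via $D \cdot \Phi \cdot \Gamma$ arrives at the same conclusions.

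One point to tighten. In the type $D_{n}$ case with $n > 4$, $q$ odd, and $D \cong C_{2} \times C_{2}$, you exhibit a $\Gamma$-invariant subgroup of order $2$ in $D$ and assert that this yields a normal series of $\Out(T)$ with cyclic factors. But normality in $\Out(T)$ requires invariance under $\Phi$ as well, and you do not address this. The fix is immediate: $\Phi$ centralizes $D$ here, either because $D$ has exponent $2$ and the field automorphism acts on the diagonal group via the $p$-th power map with $p$ odd, or simply by quoting, as the paper does, the explicit description $\Out(T) \cong D_{8} \times C_{f}$.
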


\begin{proof}
The result certainly holds when $\Out{T}$ is abelian, so assume otherwise.
In particular, $T$ is a group of Lie type over $\F_{p^f}$ for a prime number $p$ and an integer $f \geq 1$.
The structure of $\Out{T}$ is described in \cite[Theorem~2.5.12]{GorensteinLyonsSolomon98}, and from this description one sees that, since $\Out{T}$ is nonabelian, $T$ is isomorphic to one of
\begin{enumerate}[(a),itemsep=0pt]
\item 
\label{it:out_A}
$\PSL^\epsilon_n(p^f)$ with $n \geq 3$ 
\item 
\label{it:out_D}
$\POm^\epsilon_n(p^f)$ with $n \geq 8$ even 
\item 
\label{it:out_E}
$E_6^\epsilon(p^f)$ with $p^f \equiv \epsilon1 \pmod{3}$. 
\end{enumerate}
In each case, one can determine the precise structure of $\Out{T}$ from \cite[Theorem~2.5.12]{GorensteinLyonsSolomon98}, but for convenience we give more explicit references.
In \ref{it:out_A}, $\Out{T} \cong C_{\gcd(p^f-1,n)}\semidirect(C_f \times C_2)$ if $\epsilon = +$ and $\Out{T} \cong C_{\gcd(p^f+1,n)}\semidirect C_{2f}$ if $\epsilon = -$ (see also \cite[Propositions~2.2.3 and~2.3.5]{KleidmanLiebeck90}), and in both cases $\Out{T}$ has a normal series with cyclic factors, so all chief factors of $\Out{T}$ are cyclic.
In \ref{it:out_E}, $\Out{T} \cong S_3 \times C_f$ (see also \cite[Lemmas~3.10 and~3.12]{BurnessGuralnickHarper21}), so, again, every chief factor of $\Out{T}$ is cyclic.
Finally consider \ref{it:out_D}. 
If $T \not\cong \POm_8^+(p^f)$, then, since $\Out{T}$ is nonabelian, $\Out{T} \cong D_8 \times C_f$ (see also \cite[Propositions~2.7.3 and~2.8.2]{KleidmanLiebeck90}) all of whose chief factors are cyclic.
If $T \cong \POm_8^+(p^f)$, then $\Out{T} \cong S_3 \times C_f$ if $p = 2$, and $\Out{T} \cong S_4 \times C_f$ if $p > 2$ (see \cite[Remark on p.38]{KleidmanLiebeck90}), from which the result follows.
\end{proof}

\begin{lemma}
\label{lem:frattini_chief_factors}
Let $G$ be a finite group with a prescribed chief tail $\T = (N_0, \dots, N_\ell)$.
Assume that $\Frat{G/N_j} = 1$ for some $1 \leq j \leq \ell$, and let $j$ be maximal subject to this condition.
Then $\Frat{G} = N_j$.  
Furthermore, if $N_{j}$~is nontrivial, then there is a prime~$p$ such that $N_{j} \leq \Op{G} \leq N_{j-1}$.
\end{lemma}

\begin{proof}
On the one hand, if $\Frat{G} \in \T$, then, since $\Frat{G/\Frat{G}} = 1$, the maximality of $j$ forces $N_{j} \leq \Frat{G}$. 
On the other hand, if $\Frat{G} \notin \T$, then, by definition, $N_{j} < N_{0} \leq \Frat{G}$. 
Therefore, in either case, $N_{j} \leq \Frat{G}$.  
As such, $\Frat{G/N_{j}} = \Frat{G}/N_{j}$, but, by construction, $\Frat{G/N_{j}} = 1$, so $\Frat{G} = N_{j}$.
In particular, $N_{j}$~is nilpotent and hence a direct product of its Sylow subgroups, each of which is therefore normal in~$G$.
Since $\T$~is the prescribed chief tail of~$G$, we conclude that $N_{j}$~is a (possibly trivial) $p$\nbd group.

Assume that $N_{j}$ is nontrivial.  
Certainly then $N_{j} \leq \Op{G}$.  
If $N_{j-1}$~is not a $p$\nbd group then, again using the fact that $\T$~is the tail of any chief series, $\Op{G} = N_{j}$, which would establish the result. 
Assume then that $N_{j-1}/N_{j} = C_{p}^{n}$ for some~$n$.  
Since $\Frat{G/N_{j}} = 1$, Lemma~\ref{lem:faithful} tells us that $\Cent{G/N_{j-1}}{N_{j-1}/N_{j}} = 1$, so $G/N_{j-1}$ embeds in $\Aut{(N_{j-1}/N_{j})} = \GL_n(p)$.  
Moreover, since $N_{j-1}/N_{j}$ is a minimal normal subgroup, $G/N_{j-1}$ embeds as an irreducible subgroup of $\GL_(p)$.  
By Lemma~\ref{lem:no_normal_p-group}, $\Op{G/N_{j-1}} = 1$, which forces $\Op{G} \leq N_{j-1}$, as required.
\end{proof}

The following is based on the proof of \cite[Theorem~1]{HarperLiebeck25}, which, in turn, is inspired by the proof of \cite[Theorem~(II')]{FeitTits78}.
Here, and throughout, a projective representation $\lambda \colon H \to \PGL(V)$ is \emph{imprimitive} if the preimage of $H$ in $\GL(V)$ permutes the subspaces $V_1$, \dots, $V_m$ in a direct sum decomposition $V = \bigoplus_{i=1}^m V_i$ where $m>1$, and \emph{primitive} otherwise.

\begin{lemma}
\label{lem:feit-tits}
Let $n$ and $r$ be positive integers and let $p$ and $q$ be prime numbers.
Let $H$ be a finite group and let $\lambda \colon H \to \PGL_n(\FF_p)$ be a faithful primitive projective representation.
Let $N$ be a normal subgroup of $H$ of order $q^r$.
Then $r \leq n$.
Moreover, if $n > 4$, then $r < n$.
\end{lemma}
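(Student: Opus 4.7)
The plan is to lift the projective representation to a linear one and apply the classical structure theory of normal $q$\nbd subgroups of primitive linear groups. Let $\hat{H}$ be a finite central extension of $H$ admitting a faithful linear representation $\hat{\lambda}\colon \hat{H} \to \GL_n(\FF_p)$ that lifts $\lambda$ (this exists since $H$ is finite). Write $Z$ for the scalar subgroup of $\hat{H}$, which is cyclic of order coprime to $p$, and observe that $\hat{\lambda}$ is itself faithful, irreducible, and primitive. Let $\tilde{N}$ denote the preimage of $N$ in $\hat{H}$.

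First I would rule out $q = p$. Otherwise $\tilde{N}$ is a central extension of the $p$\nbd group $N$ by the $p'$\nbd group $Z$, so $\tilde{N} = \tilde{N}_{p} \times Z$ and the Sylow $p$\nbd subgroup $\tilde{N}_{p}$ is characteristic in $\tilde{N}$, hence normal in $\hat{H}$. By Lemma~\ref{lem:no_normal_p-group} applied to $\hat{\lambda}$, we have $\Op{\hat{H}} = 1$, so $\tilde{N}_{p} = 1$ and $r = 0$, a contradiction.

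Assume henceforth $q \neq p$, decompose $Z = Z_{q} \times Z_{q'}$, and set $Q = \tilde{N}_{q}$. Then $\tilde{N} = Q \times Z_{q'}$, so $Q$ is characteristic in $\tilde{N}$ and normal in $\hat{H}$. Clifford's theorem together with the primitivity of $\hat{H}$ gives $V|_Q \cong W^{k'}$ for a simple $Q$\nbd module $W$ of dimension $d$, whence $n = dk'$. Since $Q$ acts faithfully on $W$, Schur's lemma forces $Z(Q)$ to act by scalars, which together with $Z_{q} \leq Z(Q)$ yields $Z(Q) = Z_{q}$. The key step is to invoke the classical theorem (see, e.g., Huppert, \emph{Endliche Gruppen I}, Satz V.17.13, or Aschbacher, \emph{Finite Group Theory}, \S34) that a normal $q$\nbd subgroup of a primitive linear group in coprime characteristic is of \emph{symplectic type}: $Q$ is a central product of $Z(Q)$ with an extra-special $q$\nbd group $E$ of order $q^{1+2k}$ (possibly with $E$ trivial). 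Consequently, $Q/Z(Q)$ is elementary abelian of rank $2k$ and $d = q^{k}$.

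Combining the pieces, $|N| = |Q|/|Z_{q}| = |Q/Z(Q)| = q^{2k}$, so $r = 2k$, while $n = q^{k} k'$. The bound $r \leq n$ then reduces to the elementary inequality $2k \leq q^{k} k'$, valid for all $q \geq 2$, $k \geq 1$, and $k' \geq 1$ (the degenerate $k = 0$ forces $r = 0$, contradicting $r \geq 1$). Equality forces $k' = 1$ and $(q, k) \in \{(2, 1), (2, 2)\}$, i.e., $n \in \{2, 4\}$; hence $r < n$ whenever $n > 4$. The main obstacle is the symplectic-type structure theorem, which, if desired, can be proved here directly by noting that every abelian characteristic subgroup of $Q$ is normal in $\hat{H}$ and hence, by Clifford plus primitivity, acts by scalars and is cyclic, so that Philip Hall's classification of such $q$\nbd groups applies.
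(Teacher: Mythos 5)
Your proof is correct in outline and takes a genuinely different route from the paper. The paper lifts $\lambda$ to a faithful representation $\widetilde{\lambda}$ of a carefully chosen central extension $\widetilde{H}$ (in particular, it arranges for the scalar subgroup of order $4$ to lie in $\widetilde{H}\widetilde{\lambda}$ whenever $p \neq 2$), and then runs an induction on $n$ via the tensor factorization $\lambda = \lambda_1 \otimes \lambda_2$ coming from Clifford's theorem, reducing the terminal (irreducible-on-$\widetilde{N}$) case to \cite[Proposition~2.4]{HarperLiebeck25}. You avoid the induction altogether: after isolating the Sylow $q$\nbd subgroup $Q$ of the preimage and writing $V|_Q \cong W^{k'}$, you go straight to the symplectic-type structure of $Q$ and compute $r = 2k$, $n = q^k k'$, with the elementary inequality $2k \leq q^k k'$ (strict unless $k'=1$ and $(q,k) \in \{(2,1),(2,2)\}$, i.e.\ $n \in \{2,4\}$) delivering both assertions at once. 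This is a clean and more self-contained argument. The preliminary reduction to $q \neq p$ via Lemma~\ref{lem:no_normal_p-group}, and the identifications $Z(Q) = Z_q$ and $\order{N} = \order{Q/Z(Q)}$, are all fine.

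There is, however, a gap in the justification of the key structural claim when $q = 2$. You correctly note that every characteristic abelian subgroup of $Q$ is normal in $\hat{H}$ and hence (by Clifford plus primitivity) scalar and cyclic, and then say "Philip Hall's classification applies." But Hall's theorem, under just the hypothesis that characteristic abelian subgroups are cyclic, returns $Q = ER$ with $R$ allowed to be dihedral, semidihedral or generalized quaternion of order at least $16$ when $q = 2$; in those cases $Q/Z(Q)$ is \emph{not} elementary abelian, $\order{Q/Z(Q)}$ is not a perfect square, and the degree formula $d = q^k$ fails, so your computation of $r$ and $n$ does not follow. To get the needed form $Q = Z(Q) \circ E$ with $E$ extraspecial or trivial, you must exploit the \emph{stronger} fact you already recorded --- that characteristic abelian subgroups act by scalars, hence are \emph{central} in $Q$, not merely cyclic --- and then show the non-cyclic Hall cases are impossible: for instance, if $R$ were dihedral/semidihedral/quaternion of order $2^a \geq 16$, the subgroup generated by all squares in $Q$ would be a characteristic cyclic subgroup containing an element of order $2^{a-2} \geq 4$, hence not central, a contradiction. (The paper instead sidesteps this by adjoining the scalar $Z_4$ to $\widetilde{H}$, which forces $\order{Z(Q)} \geq 4$ and rules out the non-cyclic $R$ outright.) As written, the step from "Hall applies" to "$Q = Z(Q) \circ E$ with $d = q^k$" is not justified; this is the one place that needs repair.
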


\begin{proof}
Let $\eta \colon \widetilde{H} \to H$ be a finite central extension such that $\lambda$ lifts to a faithful representation $\widetilde{\lambda} \colon \widetilde{H} \to \GL_n(\FF_p)$, and, if $p \neq 2$, then choose $\widetilde{H}$ such that $\widetilde{H}\widetilde{\lambda}$ contains the subgroup $Z_4$ of $\Centre{\GL_n(\FF_p)}$ of order $4$. 
Let $\widetilde{N}$ be the full preimage of $N$ in $\widetilde{H}$.
Note that $\widetilde{N}$, being a central extension of a $q$-group, is nilpotent.

Let $\widetilde{M} \leq \widetilde{N}$ with $\widetilde{M} \normal \widetilde{H}$. 
As $\widetilde{\lambda}$ is irreducible, by Clifford's Theorem, $\widetilde{\lambda}|_{\widetilde{M}} = \alpha_1 \oplus \cdots \oplus \alpha_k$ where $\alpha_1, \dots, \alpha_k$ are the homogeneous components of $\widetilde{\lambda}|_{\widetilde{M}}$ whose respective irreducible components are pairwise nonisomorphic. 
Moreover, $\widetilde{H}$ acts transitively on these $k$ components, so $k = 1$ as $\widetilde{\lambda}$ is primitive.
Thus $\widetilde{\lambda}|_{\widetilde{M}} = \alpha_1$, the direct sum of $n_1$ isomorphic irreducible representations of dimension $n_2$. 
Therefore, $\lambda = \lambda_1 \otimes \lambda_2$ where $\lambda_1 \colon H \to \PGL_{n_1}(\FF_p)$ and $\lambda_2 \colon H \to \PGL_{n_2}(\FF_p)$ are irreducible projective representations of $H$ (see \cite[Theorem~3]{Clifford37}). 
Moreover, $\lambda_1$ and $\lambda_2$ are primitive since $\lambda$ is.

For now assume that there exists a choice of $\widetilde{M}$ such that $1 < n_1 < n$ and $1 < n_2 < n$. 
If $i \in \{1, 2\}$, then $N\lambda_i$ is a normal $q$-subgroup of the primitive subgroup $H\lambda_i \leq \PGL_{n_i}(\FF_p)$, so, by induction, $\order{N\lambda_i} \leq q^{n_i}$.
Since $\lambda = \lambda_1 \otimes \lambda_2$ is faithful, the intersection $\ker\lambda_1 \cap \ker\lambda_2$ is trivial, so $\order{N} \leq \order{N\lambda_1} \cdot \order{N\lambda_2} \leq q^{n_1+n_2}$.
In all cases $r \leq n_1 + n_2 \leq n_1n_2 = n$, and if $n > 4$, then $r \leq n_1 + n_2 < n_1n_2 = n$, as required.

It remains to assume that for all choices of $\widetilde{M}$, we have $n_1 = 1$ or $n_2 = 1$, which is to say, either $\widetilde{\lambda}_{\widetilde{M}}$ is irreducible, or $\widetilde{\lambda}|_{\widetilde{M}}$ a direct sum of isomorphic linear representations. 
In the latter case, $\widetilde{M}$ is represented by scalars, so $\widetilde{M}$ is a cyclic subgroup of $\Centre{\GL_n(\FF_p)}$.  
In particular, if $\widetilde{\lambda}|_{\widetilde{N}}$~is reducible, then $\widetilde{N} \leq \Centre{\GL_n(\FF_p)}$ and $N = \widetilde{N}\eta = 1$, which establishes the required result.

We may therefore assume that $\widetilde{\lambda}|_{\widetilde{N}}$ is irreducible.
In this case, the hypotheses of \cite[Proposition~2.4]{HarperLiebeck25} are satisfied by $\widetilde{N}$ and, by inspecting the proof of this proposition (see also \cite[(3.4)]{FeitTits78}), we see that $n = q^m$ and $N = \widetilde{N}\eta$ is the elementary abelian group of order $q^{2m}$.
In particular, $r = 2m$. 
Therefore, $r \leq q^m = n$, and, if $n > 4$, then $r < q^m = n$, as required.
\end{proof}

\begin{proposition}
\label{prop:abelian_abelian}
Let $n$ and $s$ be positive integers and let $p$ and $q$ be prime numbers. 
Let $H$ be an irreducible subgroup of $\GL_n(p)$ and let $\T$ be a prescribed chief tail of $H$.
Let $N \in \T$ be a nontrivial $q$-group and let $N/M$ be a chief factor of $H$ of order $q^s$.
Then $s \leq n$.
Moreover, if $n > 4$, then $s < n$.
\end{proposition}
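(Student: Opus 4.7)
The argument proceeds by induction on $n$. The base case $n = 1$ is immediate since $\GL_1(p)$ is abelian and every chief factor is cyclic. For the inductive step, first note that we may assume $q \neq p$: by Lemma~\ref{lem:no_normal_p-group}, $\Op{H} = 1$ for irreducible $H \leq \GL_n(p)$, ruling out any nontrivial normal $p$-subgroup. The key dichotomy is then whether $H$ acts primitively or imprimitively on $V = \F_p^n$.

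If $H$ is primitive, let $Z = H \cap \Centre{\GL_n(p)}$, a cyclic scalar subgroup, so that $H/Z$ has a faithful primitive projective action on $V$. By Lemma~\ref{lem:feit-tits} applied to the normal $q$-subgroup $\bar N = NZ/Z$ of $H/Z$, we have $\lvert \bar N \rvert \leq q^n$, strictly when $n > 4$. To convert this to a bound on $s$, consider the $H$-invariant subgroup $M(N \cap Z)$ of $N$: the chief factor property forces it to equal either $M$ or $N$. In the former case, Dedekind's identity gives $\bar N/\bar M \cong N/M$, so $s \leq \log_q\lvert \bar N \rvert \leq n$, strictly when $n > 4$. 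In the latter case, $N \leq MZ$, so $N/M$ embeds in the cyclic group $Z/(Z \cap M)$, forcing $s \leq 1$.

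If $H$ is imprimitive, fix a refined imprimitive decomposition $V = V_1 \oplus \cdots \oplus V_m$ (with $m \geq 2$) such that the stabilizer $I = \mathrm{Stab}_H(V_1)$ acts primitively on $V_1$ of dimension $n_1 = n/m$, and let $K \normal H$ be the kernel of the permutation action on the blocks. Uniseriality of $H$ places $K$ in the chain of normal subgroups, and the chief factor hypothesis forces either $K \leq M$ or $N \leq K$. In the first case, $N/M$ is a quotient of the normal $q$-subgroup $N/K$ of the transitive group $H/K \leq S_m$, so $\lvert N/M \rvert \leq \lvert S_m \rvert_q \leq q^{m-1}$, yielding $s \leq m - 1 < n$. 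In the second case, I would apply Lemma~\ref{lem:feit-tits} to the projective image of the primitive action of $I$ on $V_1$ and the normal $q$-subgroup $\rho_1(N)$, bounding its projective image by $q^{n_1}$; combining this with the injection $K \hookrightarrow \prod_i \GL(V_i)$ and the transitivity of $H$ on blocks should yield $s \leq n$.

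The main obstacle is the subcase $N \leq K$, particularly in securing the strict inequality $s < n$ when $n > 4$ but $n_1 \leq 4$, since Lemma~\ref{lem:feit-tits} provides only non-strict bounds on each block in that range. When $n_1 = 1$, each $V_i$ is $1$-dimensional and $K$ is abelian, so $N \leq K$ is an abelian $q$-group sitting inside $(C_{q^a})^m$ for some $a$; its composition factors as an $\F_q(H/K)$-module are among those of the natural permutation module $\F_q^m$, which has the $1$-dimensional fixed constant line as a submodule, so every irreducible subquotient has dimension at most $m - 1 < n$. For $n_1 \in \{2, 3, 4\}$, I expect a more delicate Clifford-theoretic analysis of $N/M$ as an $H$-module will be required, exploiting the fact that $N$ is typically a proper subdirect product of $\prod_i \pi_i(N)$ when $H$ is uniserial.
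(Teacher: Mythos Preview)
Your case division (primitive versus imprimitive, and in the latter $K \leq M$ versus $N \leq K$) matches the paper's, and your primitive and $K \leq M$ arguments are along the right lines. Two points.

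First, a technical one: Lemma~\ref{lem:feit-tits} is stated for projective representations over the algebraic closure~$\FF_p$, and an irreducible (even primitive) subgroup of $\GL_n(p)$ need not remain irreducible over~$\FF_p$. The paper deals with this at the outset by passing to an absolutely irreducible representation $H \leq \GL_m(p^{n/m}) \hookrightarrow \GL_m(\FF_p)$ for some divisor $m$ of~$n$, and then running the primitive/imprimitive dichotomy in dimension~$m$ over~$\FF_p$. The slack $m \leq n$ also handles the awkward range $m \leq 4 < n$ of the strict inequality for free.

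Second, the real gap: in the subcase $N \leq K$ your plan to aggregate bounds over all blocks via $K \hookrightarrow \prod_i \GL(V_i)$ is exactly what leaves you at $s \leq n$ and then stuck. The paper's move is to use uniseriality once more, at the level of a single block. Writing $\phi_i$ for the action on the $i$th block and $L_i = N \cap \ker\phi_i$, uniseriality forces each $L_i$ either to lie in~$M$ or to equal~$N$; since $\bigcap_i L_i = 1$ but $N \neq 1$, not all $L_i$ can equal~$N$, so some $L_i \leq M$. For that single block, $\phi_i$ carries $N/M$ isomorphically onto a quotient $X/Y$ inside $\GL(V_i)$, where $X = \phi_i(N)$ and $Y = \phi_i(M)$. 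Lemma~\ref{lem:feit-tits} bounds the projective image of~$X$ by $q^{\dim V_i}$, and the scalar part of~$X$ is cyclic, hence contributes at most one further factor of~$q$ to the elementary abelian quotient $X/Y \cong N/M$. This gives $s \leq \dim V_i + 1$; since there are at least two blocks, $\dim V_i \leq n/2$, so $s \leq n/2 + 1 < n$ whenever $n \geq 3$. No separate analysis of small block dimension is needed, and your permutation-module sketch for $n_1 = 1$, while it can be made to work, is unnecessary.
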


\begin{proof}
Since $H \leq \GL_n(p)$ is irreducible and $N$ is a nontrivial $q$-group, Lemma~\ref{lem:no_normal_p-group} forces $q \neq p$.
Since $H \leq \GL_n(p)$ is irreducible, there is a divisor $m$ of $n$ such that $H \leq \GL_m(p^{n/m})$ is absolutely irreducible, so $H \leq \GL_m(\FF_p)$ is irreducible (see \cite[Lemma~2.10.2]{KleidmanLiebeck90}).
Write $V = \FF_p^m$.
If $m = 1$, then $\GL_m(p^{n/m})$ is cyclic, so $H$ is cyclic and the result certainly holds.
For the rest of the proof, assume that $m \geq 2$.

For now assume that $H \leq \GL(V)$ is primitive.
Let $\psi \colon \GL(V) \to \PGL(V)$ be the quotient map.
Let $N_0 = \ker\psi \cap N$ and note that $N_0$ is a normal subgroup of $H$ such that $N_0 \leq N$.
However, $N \in \T$ and $N/M$ is a chief factor of $H$, so either $N_0 \leq M$ or $N_0 = N$.
On the one hand, if $N_0 \leq M$, then $N/M$ is a quotient of $N/N_0 = N\psi$, which is a normal $q$-subgroup of the primitive group $H\psi \leq \PGL(V)$.
Now Lemma~\ref{lem:feit-tits} implies that $\order{N/M} \leq \order{N/N_0} \leq q^m \leq q^n$, so $s \leq m \leq n$.
Moreover, when $n \geq 5$, if $m \geq 5$ then
Lemma~\ref{lem:feit-tits} implies that $\order{N/M} \leq \order{N/N_0} < q^m \leq q^n$, while if $m < 5$, then $m < 5 \leq n$ immediately; in either case, $s < n$, as required.
On the other hand, if $N_0 = N$, then $N \leq \ker\psi$, so $N$, being a finite subgroup of $\ker\psi = \Centre{\GL(V)}$, is cyclic, which means that $N/M = C_q^s$ is cyclic, so $s = 1 < n$, as required.

It remains to assume that $H \leq \GL(V)$ is imprimitive.
In this case, let $\D$ be a direct sum decomposition $U_1 \oplus \cdots \oplus U_k$ of $V$ that is stabilized by $H$ and such that the natural representation $\phi_i \colon H_{U_i} \to \GL(U_i)$ is primitive (but not necessarily faithful) for all $i$. 
Note that $k \geq 2$ since $H$ is imprimitive.
Since $H$ is irreducible, $H$ transitively permutes the subspaces $U_1, \dots, U_k$, so, in particular, $k$ divides $m$ and $\dim U_i = m/k $ for all $i$.
Since $H$ stabilizes $\D$ we obtain the inclusion $H \leq \GL(V)_{\D} = \GL(U_1) \wr S_k$.
Let $\pi \colon H \to S_k$ be the corresponding projection map.
Let $K = \ker\pi \cap N$ and note that $K$ is a normal subgroup of $H$ such that $K \leq N$.
However, $N \in \T$ and $N/M$ is a chief factor of $H$, so either $K \leq M$ or $K = N$.

Suppose that $K \leq M$.  
In this case, $N/M$ is a quotient of $N/K$, which is isomorphic to the image of~$N$ in~$S_{k}$.  
Therefore $N/M$~is an abelian quotient of a permutation group of degree~$k$ and so, by the main theorem of~\cite{KovacsPraeger89}, $s \leq k/q$.
Hence $s < k \leq m \leq n$, as required.

It remains to consider the case when $K = N$.
Here, $N \leq \ker\pi$ and hence $N \leq H_{U_i}$ for all $i$.
For each $i$, let $L_i = \ker\phi_i \cap N$ and note that $L_i$ is a normal subgroup of $H$ such that $L_i \leq N$.
However, $N \in \T$ and $N/M$ is a chief factor of $H$, so for every~$i$ either $L_i \leq M$ or $L_i = N$.

Suppose that $L_i = N$ for all $i$.
Then $N \leq \ker\phi_i$ for all $i$, so $N \leq \ker\phi_1 \cap \cdots \cap \ker\phi_k$, but the representation $N \to \GL(V)$ is faithful and $V = U_1 \oplus \cdots \oplus U_k$, so the intersection $\ker\phi_1 \cap \cdots \cap \ker\phi_k$ is trivial.
This contradicts $N$ being nontrivial.

Therefore, we may fix some~$i$ such that $L_i \leq M$.
Write $X = N\phi_{i} \cong N/L_i$ and $Y = M\phi_{i} \cong M/L_i$, noting that $X/Y \cong N/M$.  
Then $X$~is a normal subgroup of $H_{U_{i}}\phi_{i} \leq \GL(U_{i})$.
Let $\psi_i \colon \GL(U_i) \to \PGL(U_i)$ be the quotient map.
Then $X\psi_i$ is a normal $q$-subgroup of $H_{U_i}\phi_i\psi_i$, which is a primitive subgroup of $\PGL(U_i)$.
Therefore, Lemma~\ref{lem:feit-tits} implies that $\order{X\psi_i} \leq q^{m/k}$.
Let $Z = \ker\psi_i \cap X$.
Then $\order{X/YZ} \leq \order{X/Z} = \order{X\psi_{i}} \leq q^{m/k}$.
Furthermore, $Z$~is cyclic (being a finite subgroup of $\Centre{\GL(U_{i})}$) and $YZ/Y$~is a subgroup of the elementary abelian $q$\nbd group~$X/Y$, so $\order{YZ/Y} = \order{Z/(Y \cap Z)} \leq q$.
Putting this together, $\order{N/M} = \order{X/Y} \leq q^{m/k + 1}$ and hence $s \leq m/k + 1 \leq m/2 + 1$ since $k \geq 2$.
Therefore, $s < n$ if $n \geq 3$, and in all cases $s \leq n$, as required.
\end{proof}

We are now in a position to prove the main theorem of this section.

\begin{proof}[Proof of Theorem~\ref{thm:width}:]
For ease of notation, write $N = N_{\ell - 1} = T^{n}$.
If $N$~is nonabelian, then $\Cent{G}{N} = 1$ as $N$~is the unique minimal normal subgroup.
If $N$~is abelian then, since $\Frat{G} = 1$, Lemma~\ref{lem:faithful} implies $\Cent{G/N}{N} = 1$.
In either case, we deduce that the conjugation action of $G$ on $N$ induces a homomorphism $\rho \colon G \to \Out{N}$ with $\ker\rho = N$.

\paragraph{Case~1} $T$~is nonabelian.

In this case, $\Out{N} = \Out{T} \wr S_n$.
Let $B = (\Out{T})^n$ be the base group of the wreath product $W = \Out{T} \wr S_n$.
Then $G\rho \cap B \normal G\rho$, and $G\rho \cap B$ is soluble because $\Out{T}$~is soluble.

\paragraph{Case~1a} $S$ is nonabelian.

Necessarily $j < k$, so $N_{j-1}/N_j = S^m$ is isomorphic to a section of $G\rho/(G\rho \cap B)$, which is a subgroup of $S_n$. 
Let $p$ be the greatest divisor of $\order{S}$.
Then $C_p^m$ is also a section of $S_n$, so the main result of \cite{KovacsPraeger89} forces $n \geq pm$ as claimed in part~\ref{it:width_nn}.

\paragraph{Case~1b} $S$ is abelian.

Write $S = C_p$ where $p$ is prime.
First assume that $G\rho \cap B \leq N_j$.
Then, as above, $N_{j-1}/N_j = C_p^m$ is isomorphic to section of $G\rho/(G\rho \cap B)$, which is a subgroup of $S_n$, so the main result of \cite{KovacsPraeger89} gives $n \geq pm$. 
Now assume that $G\rho \cap B > N_j$ and hence $G\rho \cap B \geq N_{j-1}$.
Let 
\[
\Out{T} = H_0 > H_1 > \cdots > H_s = 1
\]
be a chief series for $\Out{T}$.
This gives a normal series 
\[
W > B = H_0^n > H_1^n > \cdots > H_s^n = 1
\]
for $W$, which, in turn, gives a normal series
\begin{equation}
\label{eq:width_an}
G/N \cong G\rho = G\rho \cap W \geq G\rho \cap B = G\rho \cap H_0^n \geq G\rho \cap H_1^n \geq \cdots \geq G\rho \cap H_s^n = 1
\end{equation}
for $G/N$. 
Since $\T$ is a prescribed chief tail for $G$, the terms in the series
\[
G/N \geq N_0/N > N_1/N > \cdots > N_{\ell-1}/N = 1
\]
must occur in any chief series obtained by refining \eqref{eq:width_an}.
In particular, we can fix $i \in \{ 1, \dots, s \}$ such that $N_{j-1}/N_j = C_p^m$ is a section of $(G\rho \cap H_{i-1}^n)/(G\rho \cap H_i^n)$, which is isomorphic to a subgroup of $H_{i-1}^n/H_i^n \cong (H_{i-1}/H_i)^n$.
By Lemma~\ref{lem:rank_out}, either $H_{i-1}/H_i$ is cyclic or $T = {\rm P}\Omega_8^+(q)$ for some odd prime power $q$ and $H_{i-1}/H_i = C_2^2$. 
Therefore, either $m \leq n$ or $p = 2$, $T = {\rm P}\Omega_8^+(q)$ for some odd prime power $q$ and $m \leq 2n$. 
This completes the proof of part~\ref{it:width_an}.

\paragraph{Case~2} $T$ is abelian.

Write $T = C_p$ and note that $\Out{N} = \GL_n(p)$.
In particular, $G\rho$ is a subgroup of $\GL_n(p)$ isomorphic to $G/N$.
Moreover, $N$ is a minimal normal subgroup of $G$, so $G\rho$ is irreducible.
By Lemma~\ref{lem:frattini_chief_factors}, $N_{j}/N = \Frat{G/N}$ and if this is nontrivial then $N_{j}/N \leq \Op[q]{G/N} \leq N_{j-1}/N$ for some prime~$q$.
However, $G\rho$~is an irreducible subgroup of~$\GL_{n}(p)$ so, by Lemma~\ref{lem:no_normal_p-group}, it has no nontrivial normal $p$\nbd subgroup.  
Hence, if $N_{j}/N$~is nontrivial then it is a $q$\nbd group for some prime~$q \neq p$.

\paragraph{Case~2a} $S$ is nonabelian.

Then $N_{j-1}/N_{j} = S^{m}$ is perfect, so $N_{j-1}' = N_{j-1}$ as $\T$ is a prescribed chief tail.  
Since $\Cent{G/N_{\ell-1}}{N_{\ell-1}/N_{\ell}} = \Cent{G/N}{N} = 1$, certainly $N_{j-1}/N_{\ell-1}$~does not centralize~$N_{\ell-1}/N_{\ell}$; that is, $[N_{\ell-1},N_{j-1}] \nleq N_{\ell}$.  
Take $k \in \{ j+1,\dots,\ell \}$ to be minimal such that $[N_{k-1},N_{j-1}] \nleq N_{k}$.  
Thus $[N_{i-1},N_{j-1}] \leq N_{i}$ for $j+1 \leq i \leq k-1$.
In particular, $[N_{k-2},N_{j-1}] \leq N_{k-1}$.  
For all $i$ such that $j+1 \leq i \leq k-2$, if $[N_{i},N_{j-1}] \leq N_{k-1}$, then
\[
[N_{i-1},N_{j-1},N_{j-1}] \leq [N_{i},N_{j-1}] \leq N_{k-1},
\]
so, as $N_{j-1}$~is perfect, we deduce by the Three Subgroups Lemma that $[N_{i-1},N_{j-1}] \leq N_{k-1}$.  
Therefore, since $[N_{k-2},N_{j-1}] \leq N_{k-1}$, by induction, $[N_{j},N_{j-1}] \leq N_{k-1}$.  
Hence, $N_{j}/N_{k-1}$~is central in~$N_{j-1}/N_{k-1}$.

First assume that $k < \ell$.
Then the chief factor~$N_{k-1}/N_{k}$ occurs as a section of the normal $q$\nbd subgroup~$N_{j}/N$ of~$N/N$, which is isomorphic to an irreducible subgroup of~$\GL_{n}(p)$.
Furthermore, the images of $\T$ form a prescribed chief tail of $G/N$, and, therefore, $N_{k-1}/N_{k} = C_{q}^{s}$ for some positive integer~$s$ where $s \leq n$ by Proposition~\ref{prop:abelian_abelian}.
The conjugation action of $G/N_k$ on $N_{k-1}/N_k$ induces a representation $\lambda \colon G/N_k \to \GL_s(q)$.
Moreover, $N_{k-1}/N_k$ is a minimal normal subgroup of $G/N_k$, so $\im\lambda$ is irreducible.
In particular, $\Op[q]{\im\lambda} = 1$, but $N_j/N_k$ is a normal $q$-subgroup of $G/N_k$, so $N_j/N_k \leq \ker\lambda$.
Since $N_{j-1}/N_{k-1} \nleq \Cent{G/N_{k-1}}{N_{k-1}/N_k}$, we deduce that $\ker\lambda = N_j/N_k$. 
Therefore, the image of~$\lambda$ contains a subgroup isomorphic to $N_{j-1}/N_j \cong S^m$.
In particular, there exists a faithful projective representation $S^m \to \PGL_s(q)$.
Let $t$ be the smallest degree of a faithful projective representation of $S$ in positive characteristic.
Then \cite[Proposition~5.5.7(i)]{KleidmanLiebeck90} implies that $s \geq tm$.
Since $s \leq n$, we deduce that $n \geq s \geq tm$, as claimed in part~\ref{it:width_na}.

Now assume that $k = \ell$.
Then $N_{k-1}/N_k = N = C_p^n$, and $N_{j-1}\rho$ is a subgroup of $\GL_n(p)$ isomorphic to $N_{j-1}/N_{k-1} = N_{j-1}/N$, which is a perfect central extension of $N_{j-1}/N_j = S^m$.
Moreover, $N_{j-1}\rho$ is a normal subgroup of the irreducible group $G\rho$, so, by Clifford's Theorem, we can write $\rho|_{N_{j-1}} = \rho_1 \oplus \cdots \oplus \rho_u$ where $\rho_1, \dots, \rho_u$ are the homogeneous components of $\rho|_{N_{j-1}}$ whose respective irreducible components $\lambda_1, \dots, \lambda_u$ are pairwise nonisomorphic.
In addition, $G\rho$ acts transitively on the components $\rho_1, \dots, \rho_u$, so, in particular, $\lambda_1, \dots, \lambda_u$ all have the same dimension $d$, which necessarily divides $n/u$.
Furthermore, the transitive action of $G\rho$ implies that $\im\rho_1, \dots, \im\rho_u$ are isomorphic as abstract groups.
As an abstract group, $N_{j-1}\rho$ is a subdirect product of
$\im\rho_1 \times \cdots \times \im\rho_u$, so
\cite[Corollary~2.6]{HarperLiebeck25} implies that each $\im\rho_i$ is a perfect central extension of $S^r$ for some integer $r \geq m/u$.
Writing $H = \im\lambda_{1} \cong \im\rho_{1}$, we have a perfect central extension $H$ of $S^r$ (where $r \geq m/u$) such that $H \leq \GL_d(p)$ is irreducible.
In particular, there is a divisor $e$ of $d$ such that $H \leq \GL_e(p^{d/e})$ is absolutely irreducible (see \cite[Lemma~2.10.2]{KleidmanLiebeck90}), which yields a faithful absolutely irreducible representation $\lambda \colon H \to \GL_e(\FF_p)$ (where $e$ divides $d$ which divides $n/u$).
By Schur's Lemma, this yields a faithful absolutely irreducible projective representation $\overline{\lambda} \colon S^r \to \PGL_e(\FF_p)$.
Let $t$ be the smallest degree of a faithful projective representation of $S$ in positive characteristic.
Then \cite[Proposition~5.5.7(ii)]{KleidmanLiebeck90} implies that $e \geq t^r \geq tr$.
Since $e \leq n/u$ and $r \geq m/u$, we deduce that $n \geq eu \geq tru \geq tm$, which completes the proof of part~\ref{it:width_na}.

\paragraph{Case~2b} $S$ is abelian.

As we noted at the start of Case~2, if $N_{j}/N$~is nontrivial then there is a prime~$q \neq p$ such that $N_{j}/N \leq \Op[q]{G/N} \leq N_{j-1}/N$. 
Thus, there are three cases to consider:
\begin{enumerate}[{\rm (I)}]
\item 
$N_{j} = N$ and $N_{j-1}/N_{j} = S^{m} = C_{q}^{m}$ for some prime~$q$.  
Necessarily, $q \neq p$ by Lemma~\ref{lem:no_normal_p-group}.
\item 
$N_{j} \neq N$ and there is a prime~$q \neq p$ such that $\Op[q]{G/N} = N_{j-1}/N$.
\item 
$N_{j} \neq N$ and there is a prime~$q \neq p$ such that $\Op[q]{G/N} = N_{j}/N$.  
In this case $N_{j-1}/N_{j} = S^{m} = C_{r}^{m}$ for some prime~$r \neq q$.
\end{enumerate}

First consider cases~(I) and~(II).
In both cases, $N_{j-1}/N$~is a normal $q$\nbd subgroup of~$G/N$, which is an irreducible subgroup of~$\GL_{n}(p)$, and $N_{j-1}/N_{j} = C_{q}^{m}$ is a chief factor occurring in a prescribed chief tail. 
Proposition~\ref{prop:abelian_abelian} implies that $m < n$ if $n \geq 5$ and $m \leq n$ in all cases.

Therefore, it remains to consider case~(III).
Write $N_{j}/N_{j+1} = C_q^s$ for some $s \geq 1$.
Since $N \cong C_{p}^{n}$ is the minimal normal subgroup of~$G$, \ $G/N$~is isomorphic to an irreducible subgroup of~$\GL_{n}(p)$, so $s \leq n$ and if $n \geq 5$ then $s < n$.  

We claim that $\Cent{G/N_{j}}{N_{j}/N_{j+1}} = 1$.
To see this, suppose otherwise.
Then $N_{j-1}/N_j = C_r^m$ must centralize $N_j/N_{j+1} = C_q^s$.
Therefore, $Q = N_j/N_{j+1}$ is a central Sylow $q$-subgroup of $N_{j-1}/N_{j+1}$, which means that the Sylow $r$-subgroup $R$ of $N_{j-1}/N_{j+1}$ is normal in $N_{j-1}/N_{j+1}$, hence characteristic in $N_{j-1}/N_{j+1}$ and therefore normal in $G/N_{j+1}$. 
Hence, 
\begin{gather*}
G/N_{j+1} > N_{j-1}/N_{j+1} > Q > 1 \\
G/N_{j+1} > N_{j-1}/N_{j+1} > R > 1
\end{gather*}
are two normal series for $G/N_{j+1}$ with no common refinement, which contradicts $N_{j-1}$ being a member of the prescribed chief tail $\T$.
This proves the claim.

Since $\Cent{G/N_j}{N_j/N_{j+1}} = 1$, the conjugation action of $G/N_{j+1}$ on $N_j/N_{j+1}$ induces a representation $\lambda \colon G/N_{j+1} \to \GL_s(q)$ with $\im\lambda \cong G/N_j$.
Moreover, $N_j/N_{j+1}$ is a minimal normal subgroup of $G/N_{j+1}$, so $\im\lambda$ is irreducible.
Since $G/N_j$ is isomorphic to an irreducible subgroup of $\GL_s(q)$ and $N_{j-1}/N_j = C_r^m$ is a chief factor occurring in a prescribed chief tail, Proposition~\ref{prop:abelian_abelian} implies that $m \leq s$.
However, we know that $s < n$ if $n \geq 5$ and $s \leq n$ in all cases, and this establishes part~\ref{it:width_aa}.
\end{proof}

\subsection{Generation of groups with prescribed chief tail}
\label{ss:main}

We now turn to the proof of the main theorem of the paper.
Let us fix some notation for this section.
Let $G$ be a finite group with a prescribed chief tail $\T = (N_0, \dots, N_\ell)$.
Write
\[
\{ i_1, \dots, i_k \} = \set{i \in \{ 1, \dots, \ell \}}{\Frat{G/N_i} = 1},
\]
where $i_1 < \cdots < i_k$. 
(It might be that $k = 0$, which corresponds to $N_0 \leq \Frat{G}$, but this degenerate case causes no problems for the general arguments that follow.)
For $j \in \{ 1, \dots, k \}$, write
\[
N_{i_j-1}/N_{i_j} = T_j^{n_j},
\]
where $T_j$ is simple.
Let us record some useful information about the parameters $n_1, \dots, n_k$.

\begin{lemma}
\label{lem:width_cyclic}
Adopt the notation above.
Let $m \in \{0, \dots, k - 1\}$, let $j \in \{ 1, \dots, k - m \}$ and assume that $T_j, \dots, T_{j+m}$ are all cyclic.
Then $N_{i_j-1}/N_{i_{j+m}}$ is soluble of derived length at least $m + 1$.
\end{lemma}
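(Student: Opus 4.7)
The plan splits into two parts: solubility, then the derived-length bound by induction on $m$.

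Solubility is routine. I would show every chief factor of $G_{i_j-1}/G_{i_{j+m}}$ is abelian. The non-Frattini chief factors in this range are $T_s^{n_s}$ for $s \in \{j,\dots,j+m\}$, abelian by the cyclic hypothesis. Any Frattini chief factor $G_{s-1}/G_s$ lies in $\Frat{G/G_s}$, which by Lemma~\ref{lem:frattini_chief_factors} (applied to the uniserial group $G/G_s$) is a $p$-group, so $G_{s-1}/G_s$ is a $p$-group and, being characteristically simple, elementary abelian. Hence $G_{i_j-1}/G_{i_{j+m}}$ is soluble.

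For the derived-length bound I would induct on $m$. The case $m=0$ is immediate, since $G_{i_j-1}/G_{i_j} = T_j^{n_j}$ is a nontrivial abelian chief factor. For the inductive step, I would pass to $\overline{G} = G/G_{i_{j+m}}$, so that $\Frat{\overline{G}} = 1$ by the definition of $i_{j+m}$, and note that $\overline{G}$ is uniserial with unique minimal normal subgroup $N = \overline{G_{i_{j+m}-1}} = T_{j+m}^{n_{j+m}}$, which is abelian. Lemma~\ref{lem:faithful} then gives $\Cent{\overline{G} / N}{N} = 1$, equivalently $\Cent{\overline{G}}{N} = N$.

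Setting $L = \overline{G_{i_j-1}}$, the inductive hypothesis applied to the uniserial group $G$ yields that $L/\overline{G_{i_{j+m-1}}} \cong G_{i_j-1}/G_{i_{j+m-1}}$ has derived length at least $m$, whence $L^{(m-1)} \nleq \overline{G_{i_{j+m-1}}}$. Suppose for contradiction that $L^{(m-1)}$ is abelian. Since $L^{(m-1)}$ is characteristic in $L \normal \overline{G}$, it is normal in $\overline{G}$, so by uniseriality $L^{(m-1)} = \overline{G_s}$ for some $s$; the constraint above forces $s \leq i_{j+m-1}-1$, and in particular $\overline{G_s} \supseteq N$. Abelianness of $\overline{G_s}$ then forces $\overline{G_s} \leq \Cent{\overline{G}}{N} = N$, i.e.\ $s \geq i_{j+m}-1$, contradicting $s \leq i_{j+m-1}-1 < i_{j+m}-1$. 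Therefore $L^{(m-1)}$ is nonabelian and $L$ has derived length at least $m+1$. The main subtlety is spotting the correct quotient in which to apply Lemma~\ref{lem:faithful}: reducing modulo $G_{i_{j+m}}$ (rather than any smaller term) is what makes $\Frat{\overline{G}} = 1$ hold, which is exactly the input needed to convert the non-Frattini hypothesis at the bottom of the chief series into the centralizer condition $\Cent{\overline{G}}{N} = N$ that closes the induction.
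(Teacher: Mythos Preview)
Your proof is correct and uses essentially the same idea as the paper: both arguments hinge on Lemma~\ref{lem:faithful} (applied to the quotient $G/G_{i_{j+r}}$ where $\Phi$~vanishes) together with uniseriality to force the derived series of $G_{i_j-1}$ down through the chief series. The only organisational difference is that the paper runs a direct forward induction proving the containment $G_{i_j-1}^{(r)} \geq G_{i_{j+r}-1}$ for each $r \leq m$ (using that $[G_{i_j-1}^{(r)}, G_{i_{j+r+1}-1}]$ is a normal subgroup not contained in $G_{i_{j+r+1}}$, hence equal to $G_{i_{j+r+1}-1}$), whereas you induct on~$m$ and argue by contradiction that $L^{(m-1)}$ cannot be abelian; these are minor rearrangements of the same underlying step.
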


\begin{proof}
By hypothesis, the chief factors $N_{i_j-1}/N_{i_j}$ are all abelian.
If $1 \leq i \leq \ell$ and $\Frat{G/N_i} \neq 1$, then $N_{i-1}/N_i \leq \Frat{G/N_i}$ since $N_{i-1}$ lies in the prescribed chief tail $\T$ and hence $N_{i-1}/N_i$ must also be abelian.
Therefore, all composition factors of $N_{i_j-1}/N_{i_{j+m}}$ are abelian, so $N_{i_j-1}/N_{i_{j+m}}$ is soluble.

We claim that for all $r \leq m$, the $r$th derived subgroup $N_{i_j-1}^{(r)}$ contains $N_{i_{j+r}-1}$.
To prove the claim, we proceed by induction, noting that the claim holds when $r = 0$.
Now assume that $N_{i_j-1}^{(r)} \geq N_{i_{j+r}-1}$ for some $r \in \{ 0, \dots, m - 1 \}$.
Then
\[
N_{i_j-1}^{(r)} \geq N_{i_{j+r}-1} > N_{i_{j+r}} \geq N_{i_{j+r+1}-1}
\]
so $N_{i_j-1}^{(r)}/N_{i_{j+r+1}-1}$ is a nontrivial normal subgroup of $G/N_{i_{j+r+1}-1}$. 
Since $N_{i_{j+r+1}-1} \in \T$, we may apply Lemma~\ref{lem:faithful}, to deduce that $G/N_{i_{j+r+1}-1}$ acts faithfully on $N_{i_{j+r+1}-1}/N_{i_{j+r+1}}$.
Hence, the nontrivial subgroup $N_{i_j-1}^{(r)}/N_{i_{j+r+1}-1}$ does not centralize $N_{i_{j+r+1}-1}/N_{i_{j+r+1}}$.
Therefore,
\[
N_{i_j-1}^{(r+1)} \geq [ N_{i_j-1}^{(r)}, N_{i_{j+r+1}-1} ] = N_{i_{j+r+1}-1}.
\]
This completes the induction and proves the claim.

In particular,
\[
N_{i_j-1}^{(m)} \geq N_{i_{j+m}-1} > N_{i_{j+m}},
\]
so the derived length of $N_{i_j-1}/N_{i_{j+m}}$ is at least $m + 1$, as required.
\end{proof}

\begin{proposition}
\label{prop:width_small}
Adopt the notation above. 
There exists $m \leq 15$ such that $n_m \geq 5$.
\end{proposition}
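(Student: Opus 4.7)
The plan is to argue by contradiction, assuming $n_j \leq 4$ for all $j \in \{1, \dots, 15\}$, by combining three ingredients: the structural Lemmas~\ref{lem:complemented} and~\ref{lem:faithful}, the width-growth Theorem~\ref{thm:width}, and the derived-length estimate Lemma~\ref{lem:width_cyclic} together with the classical bound on the derived length of soluble subgroups of $\GL_n(F)$ for $n \leq 4$.

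First, I would show that the first nonabelian chief factor $T_{j^*}$ occurs at position $j^* \leq 3$. If instead $T_1, \dots, T_q$ were all cyclic of prime order, then the minimal normal subgroup $T_q = C_p$ of $G/G_{i_q}$ would admit a complement $H$ by Lemma~\ref{lem:complemented}, and this complement would act faithfully on $T_q$ by Lemma~\ref{lem:faithful}; hence $H \leq \Aut(C_p) \cong C_{p-1}$ is cyclic. But $H \cong G/G_{i_q-1}$ is uniserial, so $|H|$ is a prime power, and a cyclic prime-power group has only one non-Frattini chief factor while $H$ has $q - 1$ of them, forcing $q \leq 2$.

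Next, I would apply Theorem~\ref{thm:width} iteratively to control the nonabelian positions $p_1 < \dots < p_s$ in $\{1, \dots, 15\}$. Case~\ref{it:width_nn} rules out two consecutive nonabelian chief factors; case~\ref{it:width_na} forces $n_p \leq 2$ for any nonabelian $T_p$ at position $p \leq 14$ and restricts $T_p$ to have projective degree at most $2$, that is, $T_p \cong \PSL_2(q)$ for some $q$; case~\ref{it:width_an} with $\delta = \tfrac{1}{2}$ demands $T_{p+1} = \POm^+_8(q)$, for which $t = 8$ then pushes $p + 1 = 15$; and case~\ref{it:width_aa} makes widths non-decreasing within cyclic runs. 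Iterating the resulting inequality $n_{p_{i+1}} \geq 2 n_{p_i}$ for $p_{i+1} < 15$ shows $s \leq 3$ and pins down the widths within each cyclic run (at most $1$ initially, at most $2$ between the first and second nonabelian, and exactly $4$ afterwards).

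Finally, I would use Lemma~\ref{lem:width_cyclic} to bound the lengths of the cyclic runs. For any run $T_a, \dots, T_b$ of cyclic chief factors, the faithful action of $G/G_{i_b-1}$ on $G_{i_b-1}/G_{i_b} = T_b^{n_b}$ embeds this group in $\GL_{n_b}(p_b) \leq \GL_4(p_b)$, while Lemma~\ref{lem:width_cyclic} forces the soluble section $G_{i_a-1}/G_{i_b-1}$ to have derived length at least $b - a$. The classical bound on the derived length of soluble subgroups of $\GL_n(F)$ for $n \leq 4$ is a small absolute constant, so $b - a$ is bounded; the sharper bound for $\GL_2(F)$ applies to width-$2$ runs. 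Combined with $s \leq 3$ and the widths identified above, a case-by-case check shows that fifteen non-Frattini chief factors cannot be accommodated, yielding the contradiction. The main obstacle will be the residual configurations -- notably $s = 2$ with both $T_{p_1}$ and $T_{p_2}$ isomorphic to some $\PSL_2(q)$ and an intermediate cyclic run of width $2$ -- where the $\GL_4(F)$ and $\GL_2(F)$ derived-length bounds must be combined tightly with the widths forced by Theorem~\ref{thm:width} in order to rule out the run lengths permitted by Lemma~\ref{lem:width_cyclic}.
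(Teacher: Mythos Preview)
Your first step contains a fatal error: the minimal normal subgroup of $G/G_{i_q}$ is $T_q^{n_q}$, not $T_q$. When $T_q = C_p$, this means the complement $H$ embeds in $\GL_{n_q}(p)$, not in $\Aut(C_p) \cong C_{p-1}$, so there is no reason for $H$ to be cyclic. Consequently your bound $j^\ast \leq 3$ is false, and the paper's own Example~\ref{ex:affine_equality} already exhibits a uniserial group with four non-Frattini chief factors, all abelian. Since the rest of your argument is calibrated to fit everything into $15$ slots starting from $j^\ast \leq 3$, the whole structure collapses. The correct bound on an initial abelian run comes instead from Huppert's theorem on the derived length of soluble linear groups (roughly $2n$ for $\GL_n$), which together with Lemma~\ref{lem:width_cyclic} gives only $j^\ast \leq 9$ or so.

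The paper sidesteps your intricate case analysis of nonabelian positions with a much cleaner observation you have missed: if $T_r$ and $T_s$ are both nonabelian with $r < s$, then $G/G_{i_s-1}$ embeds in $\Out{T_s} \wr S_{n_s}$ and is insoluble (having $T_r^{n_r}$ as a section), forcing $S_{n_s}$ to be insoluble and hence $n_s \geq 5$. Thus at most one $T_i$ among $T_1,\dots,T_m$ is nonabelian. This replaces your iteration over $p_1 < \dots < p_s$ entirely. From there the paper splits into two short cases according to whether the unique nonabelian factor (if any) is at the end or not, using Huppert's bound and the fact that $\Out{T}$ has derived length at most~$3$; the arithmetic then yields $m \leq 14$ directly.
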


\begin{proof}
Let $m \in \{ 1, \dots, k \}$ and assume that $n_1, \dots, n_m$ are all at most $4$.
We will prove that $m \leq 14$, which establishes the result we desire.

We claim that at most one of $T_1, \dots, T_m$ is nonabelian.
To prove the claim, we suppose for a contradiction that there exist integers $1 \leq r < s \leq m$ such that $T_r$ and $T_s$ are both nonabelian.
Now $G/N_{i_s}$ acts faithfully by conjugation on $N_{i_s-1}/N_{i_s} = T_s^{n_s}$ and hence $G/N_{i_s-1}$ embeds in $\Out{T_s} \wr S_{n_s}$.
Since $T_r$ is nonabelian, $G/N_{i_s-1}$ is insoluble, but $\Out{T_s}$ is soluble, so $S_{n_s}$ must be insoluble and hence $n_s \geq 5$, which is contrary to our hypothesis.
This proves the claim.

For now assume that $T_1, \dots, T_m$ are all abelian.
Then Lemma~\ref{lem:width_cyclic} implies that $N_{i_1-1}/N_{i_{m-1}}$ is soluble of derived length at least $m - 1$.
Consequently, $N_{i_1-1}/N_{i_m-1}$ is also soluble of derived length at least $m - 1$.
Now $G/N_{i_m-1}$ embeds in $\Aut{(T_m^{n_m})}$ and $\Aut{(T_m^{n_m})} = \GL_{n_m}(p)$ for some prime $p$, so a result of Huppert~\cite[Satz~9]{Huppert57} implies that $m - 1 \leq 2 n_m \leq 8$, so $m \leq 9$.

It remains to assume that there exists a unique integer $r \in \{ 1, \dots, m \}$ such that $T_r$ is nonabelian.
Then $G/N_{i_r}$ acts faithfully by conjugation on $N_{i_r-1}/N_{i_r} = T_r^{n_r}$, so $G/N_{i_r-1}$ embeds in $\Out{T_r} \wr S_{n_r}$.
It follows quickly from the description of outer automorphism groups of simple groups in \cite[Theorem~2.5.12]{GorensteinLyonsSolomon98}, for example, that $\Out{T_r}$ has derived length at most $3$.

For now assume that $r = m$.
Then $n_m = n_r \leq 4$ and the derived length of $G/N_{i_m-1} = G/N_{i_r-1}$ is at most the derived length of $\Out{T_r} \wr S_4$, which is at most $6$.
The derived length of $G/N_{i_m-1}$ is at least that of $G/N_{i_{m-1}}$, which, by Lemma~\ref{lem:width_cyclic}, is at least $m - 1$.
Therefore, $m - 1 \leq 6$, so $m \leq 7$.

It remains to assume that $r < m$.
By Theorem~\ref{thm:width}, $n_{r+1} \geq 2n_r$. 
In particular, $n_r \leq 2$ since $n_{r+1} \leq 4$.
Therefore, the derived length of $G/N_{i_r-1}$ is at most the derived length of $\Out{T_r} \wr S_2$, which is at most $4$.
The derived length of $G/N_{i_r-1}$ is at least that of $G/N_{i_{r-1}}$, which, by Lemma~\ref{lem:width_cyclic}, is at least $r - 1$.
Therefore, $r - 1 \leq 4$.
Since $T_{r+1}, \dots, T_{m-1}$ are all abelian, Lemma~\ref{lem:width_cyclic} implies that $N_{i_{r+1}-1}/N_{i_{m-1}}$ is a soluble group of derived length at least $m - r - 1$.
However, $N_{i_{r+1}-1}/N_{i_{m-1}}$ embeds in $\GL_{n_m}(p)$ for some prime $p$, so \cite[Satz~9]{Huppert57} implies that $m - r - 1 \leq 2n_m \leq 8$.
Therefore, $m \leq 14$.
This completes the proof.
\end{proof}

\begin{proposition}
\label{prop:width_grow}
Adopt the notation above.
Let $j \in \{ 1, \dots, k - 1 \}$ and assume that $n_j \geq 5$.
Then one of the following holds
\begin{enumerate}
\item \label{it:width_grow_increase}
$n_{j+1} \geq n_j + 1$.
\item \label{it:width_grow_decrease}
$n_j \geq n_{j+1} \geq \half n_j$ and, if $j \leq k - 2$, then $n_{j+2} \geq n_j + 2$.
\end{enumerate}
\end{proposition}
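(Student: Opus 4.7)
The plan is to apply Theorem~\ref{thm:width} to two carefully chosen quotients of $G$. First, I would apply it to $\overline{G} = G/G_{i_{j+1}}$, which is uniserial with $\Frat{\overline{G}} = 1$, chief length $i_{j+1} \geq 2$, and largest non-Frattini index in $\{1,\dots,i_{j+1}-1\}$ equal to $i_j$; thus Theorem~\ref{thm:width} applies with $(S,m,T,n) = (T_j, n_j, T_{j+1}, n_{j+1})$. Three of its four cases should directly deliver $n_{j+1} \geq n_j + 1$: parts~\ref{it:width_nn} and~\ref{it:width_na} each give $n_{j+1} \geq 2 n_j$ since both the greatest prime divisor of any nonabelian simple group and the smallest degree of a faithful projective representation of such a group in positive characteristic are at least $2$; and part~\ref{it:width_aa} gives $n_{j+1} \geq n_j + 1$ directly because $n_{j+1} \geq n_j \geq 5$. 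In each of these, part~\ref{it:width_grow_increase} of the proposition follows.

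The remaining case is Theorem~\ref{thm:width}\ref{it:width_an}, with $T_j = C_p$, $T_{j+1}$ nonabelian, and $n_{j+1} \geq \delta n_j$ where $\delta \in \{1/2,1\}$. If $n_{j+1} \geq n_j + 1$, part~\ref{it:width_grow_increase} holds; otherwise $n_j \geq n_{j+1} \geq n_j/2$, which is the numerical content of part~\ref{it:width_grow_decrease}, leaving the bound $n_{j+2} \geq n_j + 2$ (when $j \leq k-2$) to establish. For this, I would apply Theorem~\ref{thm:width} a second time, now to $G/G_{i_{j+2}}$ with $(S,m,T,n) = (T_{j+1}, n_{j+1}, T_{j+2}, n_{j+2})$.

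Since $T_{j+1}$ is nonabelian, only parts~\ref{it:width_nn} or~\ref{it:width_na} of Theorem~\ref{thm:width} can apply. Part~\ref{it:width_nn} gives $n_{j+2} \geq p' n_{j+1}$ with $p' \geq 5$ (since by Burnside's $p^a q^b$ theorem any nonabelian simple group has a prime divisor exceeding $3$), yielding $n_{j+2} \geq 5 n_j/2 \geq n_j + 2$. Part~\ref{it:width_na} gives $n_{j+2} \geq t' n_{j+1}$, where $t'$ is the smallest faithful projective representation degree of $T_{j+1}$ in positive characteristic. When $\delta = 1$, we have $n_{j+1} = n_j$ and the crude bound $t' \geq 2$ already yields $n_{j+2} \geq 2 n_j \geq n_j + 2$. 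The main obstacle is the exceptional subcase $\delta = 1/2$, in which $T_{j+1} = \POm_8^+(q)$ for odd $q$ and only $n_{j+1} \geq n_j/2$ is known: here one needs $t' \geq 3$. This I would verify from the classical classification of finite subgroups of $\PGL_2(\FF_\ell)$ (cyclic, dihedral, $A_4$, $S_4$, $A_5$, $\PSL_2(\ell^a)$, and $\PGL_2(\ell^a)$), which does not include $\POm_8^+(q)$ for any odd $q$; then $n_{j+2} \geq 3 n_j/2 \geq n_j + 2$ since $n_j \geq 5$.
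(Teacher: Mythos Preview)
Your proposal is correct and follows essentially the same approach as the paper's proof: both apply Theorem~\ref{thm:width} first to $G/G_{i_{j+1}}$ and then, in the residual case where $T_j$ is cyclic and $T_{j+1}$ nonabelian, to $G/G_{i_{j+2}}$, splitting according to whether $T_{j+2}$ is nonabelian or abelian. The only cosmetic difference is in the justification that $t' \geq 3$ when $T_{j+1} = \POm_8^+(q)$: you invoke Dickson's classification of finite subgroups of $\PGL_2(\FF_\ell)$, whereas the paper cites the representation-degree tables in \cite[Theorem~5.3.9 and Proposition~5.4.13]{KleidmanLiebeck90}.
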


\begin{proof}
Let $j \in \{ 1, \dots, k - 1 \}$ such that $n_j \geq 5$.
Assume that part~\ref{it:width_grow_increase} does not hold, so $n_{j+1} \leq n_j$.
Then, by Theorem~\ref{thm:width}, $T_j$ is abelian and $T_{j+1}$ is nonabelian.
Moreover, $n_{j+1} \geq \frac{1}{2}n_j$ and unless $T_{j+1} = \mathrm{P}\Omega_8^+(q)$ for an odd prime power $q$, then in fact $n_{j+1} = n_{j}$.
It remains to assume that $j \leq k - 2$.
First assume that $T_{j+2}$ is nonabelian.
Then since $T_{j+1}$ is nonabelian, Theorem~\ref{thm:width}\ref{it:width_nn} implies that $n_{j+2} \geq 5n_{j+1} \geq \frac{5}{2}n_j \geq n_j + 2$.
Now assume that $T_{j+2}$ is abelian.
Theorem~\ref{thm:width}\ref{it:width_na} implies that $n_{j+2} \geq tn_{j+1}$ where $t$~is the smallest degree of a faithful projective representation of~$T_{j+1}$ in positive characteristic.
If $T_{j+1} \neq \mathrm{P}\Omega_{8}^{+}(q)$ for an odd prime power $q$, then $n_{j+1} = n_{j}$, so we deduce $n_{j+2} \geq 2n_{j+1} \geq n_{j}+2$.
On the other hand, if $T_{j+1} = \mathrm{P}\Omega_8^+(q)$ for an odd prime power $q$, then it is easy to see that $t \geq 3$ (see \cite[Theorem~5.3.9 \& Proposition~5.4.13]{KleidmanLiebeck90}, for example), so $n_{j+2} \geq 3n_{j+1} \geq \frac{3}{2}n_j \geq n_j + 2$.
\end{proof}

\begin{corollary}
\label{cor:width}
Adopt the notation above.
There exists $m \in \{ 1, \dots, 15 \}$ such that $n_{m} \geq 5$ and, for all $i \in \{ 1, \dots, k - m \}$, the following hold
\begin{enumerate}
\item \label{it:width_increase}
If $n_{m+i} > n_{m+i-1}$, then $n_{m+i} \geq n_m + i$.
\item \label{it:width_decrease}
If $n_{m+i} \leq n_{m+i-1}$, then $n_{m+i} \geq \frac{1}{2}(n_m + i - 1)$.
\item \label{it:width}
In all cases, $n_{m+i} \geq \frac{1}{3}(n_m + i)$.
\end{enumerate}
\end{corollary}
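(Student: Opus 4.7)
The plan is to choose $m$ to be the smallest index in $\{1,\dots,15\}$ with $n_m \geq 5$, which exists by Proposition~\ref{prop:width_small}. Writing $a_i = n_{m+i}$ for brevity, the task is to analyze the sequence $a_0, a_1, \dots, a_{k-m}$, starting from $a_0 = n_m \geq 5$.

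The key structural observation to be drawn from Proposition~\ref{prop:width_grow} is that, provided $a_j \geq 5$, two consecutive decreases are impossible: if $a_{j+1} \leq a_j$, then case~(ii) of the proposition gives both $a_{j+1} \geq a_j/2$ and $a_{j+2} \geq a_j + 2 > a_{j+1}$, forcing step $j+2$ to be an increase. I would prove parts~\ref{it:width_increase} and~\ref{it:width_decrease} by simultaneous induction on $i$, carrying as an auxiliary clause that whenever step $i$ is a decrease, $a_{i-1} \geq 5$; this is exactly what is needed to apply Proposition~\ref{prop:width_grow} at the correct index in each case.

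The inductive step splits according to whether step $i$ was an increase or a decrease. If $a_i > a_{i-1}$, then part~\ref{it:width_increase} at $i$ gives $a_i \geq n_m + i \geq 5$, and applying Proposition~\ref{prop:width_grow} at $j = m+i$ directly yields the bounds at $i+1$: the "up" subcase gives $a_{i+1} \geq a_i + 1 \geq n_m + (i+1)$, and the "down" subcase gives $a_{i+1} \geq a_i/2 \geq \tfrac{1}{2}(n_m + (i+1) - 1)$, with $a_i \geq 5$ discharging the auxiliary clause at $i+1$. If instead $a_i \leq a_{i-1}$, the auxiliary clause at $i$ supplies $a_{i-1} \geq 5$, and the key observation rules out step $i-1$ having also been a decrease, so step $i-1$ was an increase and $a_{i-1} \geq n_m + (i-1)$ by the inductive hypothesis; applying case~(ii) of Proposition~\ref{prop:width_grow} at $j = m+i-1$ then yields $a_{i+1} \geq a_{i-1} + 2 \geq n_m + (i+1) > a_{i-1} \geq a_i$, so step $i+1$ is an increase and part~\ref{it:width_increase} holds at $i+1$.

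Part~\ref{it:width} is then a cheap consequence: the bound $n_m + i$ trivially dominates $\tfrac{1}{3}(n_m + i)$, and $\tfrac{1}{2}(n_m + i - 1) \geq \tfrac{1}{3}(n_m + i)$ whenever $n_m + i \geq 3$, which is ensured by $n_m \geq 5$. The only subtlety I anticipate is the bookkeeping around the "$a_j \geq 5$" hypothesis of Proposition~\ref{prop:width_grow}: after a decrease one can have $a_i < 5$, so the proposition must be invoked at the preceding index rather than at $i$ itself, and the auxiliary clause in the induction is designed precisely to guarantee that this can always be done.
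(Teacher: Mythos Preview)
Your proposal is correct and follows essentially the same strategy as the paper: both fix $m$ via Proposition~\ref{prop:width_small}, prove parts~\ref{it:width_increase} and~\ref{it:width_decrease} by simultaneous induction on~$i$, exploit Proposition~\ref{prop:width_grow} to rule out two consecutive weak decreases, and then read off part~\ref{it:width} from the first two. Your explicit auxiliary clause tracking $a_{i-1} \geq 5$ whenever step~$i$ is a decrease is a slightly more careful packaging of the same bookkeeping that the paper handles by asserting that ``the sequence $(n_m,\dots,n_k)$ does not weakly decrease twice in a row''; just note that when you invoke the key observation to rule out step~$i-1$ being a decrease, it is the auxiliary clause at~$i-1$ (supplying $a_{i-2} \geq 5$) rather than at~$i$ that does the work.
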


\begin{proof}
By Proposition~\ref{prop:width_small}, we can fix $m \in \{ 1, \dots, 15 \}$ such that $n_m \geq 5$.
We will prove parts~\ref{it:width_increase} and~\ref{it:width_decrease} together by induction on $i$.
If $i = 1$, then both parts hold by Proposition~\ref{prop:width_grow}.
Now assume that $i \geq 2$ and that both parts hold for smaller values of $i$.
First assume that $n_{m+i-1} > n_{m+i-2}$. 
Then, by induction, $n_{m+i-1} \geq n_m + i - 1$, so if $n_{m+i} > n_{m+i-1}$, then $n_{m+i} \geq n_m + i$, and if $n_{m+i} \leq n_{m+i-1}$, then $n_{m+i} \geq \frac{1}{2}(n_m + i - 1)$, as required.
Now assume that $n_{m+i-1} \leq n_{m+i-2}$.
By Proposition~\ref{prop:width_grow}, this means that $n_{m+i} \geq n_{m+i-2} + 2$.
In particular, the claim holds if $i = 2$.  
If $i > 2$ then, as it is also clear from Proposition~\ref{prop:width_grow} that the sequence $(n_m, \dots, n_k)$ does not weakly decrease twice in a row, necessarily $n_{m+i-2} > n_{m+i-3}$, which, by induction, means that $n_{m+i-2} \geq n_m + i - 2$.
Therefore, $n_{m+i} \geq n_{m+i-2} + 2 \geq n_m + i - 2 + 2 = n_m + i$, as required.
This completes the proof of parts~\ref{it:width_increase} and~\ref{it:width_decrease}, and combining these gives part~\ref{it:width} noting that $n_m \geq 5$.
\end{proof}

Before proving our main result, we need one final technical ingredient.
For a characteristically simple group $N$, write
\begin{equation}
\label{eq:aut_tilde}
\wAut(N) = \left\{ 
\begin{array}{ll}
N \semidirect \Aut{N} & \text{if $N$ is abelian} \\
\Aut{N}               & \text{if $N$ is nonabelian.}
\end{array}
\right.
\end{equation}
The significance of this definition is that if $\Frat{G/N_i} = 1$, then it follows that $G/N_{i}$~embeds in $\wAut(N_{i-1}/N_{i})$.  
When $N_{i-1}/N_{i}$~is nonabelian this holds because $G/N_{i}$~acts faithfully on its minimal normal subgroup.  
When $N_{i-1}/N_{i}$~is abelian, we use Lemmas~\ref{lem:complemented} and~\ref{lem:faithful} to tell us there is a complement for~$N_{i-1}/N_{i}$ that acts faithfully on the minimal normal subgroup and hence establish that $G/N_{i}$~embeds in the semidirect product~$\wAut(N_{i_1}/N_{i})$.
The following lemma will be useful.

\begin{lemma}
\label{lem:alpha_growth}
For every~$A > 0$, there exists~$B > 0$ such that if $T$~is a finite simple group and $n$~is a positive integer with $\order{\wAut(T^{n})} \geq B$, then $\alpha(T)^{n} \geq A$.
\end{lemma}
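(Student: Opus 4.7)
The plan is to split the argument according to the size of~$n$. By Lemma~\ref{lem:alpha_bound}, there is an absolute constant $C > 1$ such that $\alpha(T) \geq C$ for every finite simple group~$T$. I would fix an integer~$n_{0}$ with $C^{n_{0}} \geq A$. Then for every $n \geq n_{0}$ and every simple~$T$, one already has $\alpha(T)^{n} \geq C^{n} \geq A$, independently of the choice of~$B$. Consequently, only the finitely many values $n \in \{1, \dots, n_{0}-1\}$ require any genuine work.

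For each such fixed~$n$, the key observation is that $\order{\wAut(T^{n})} \to \infty$ as $\order{T} \to \infty$. In the nonabelian case, $\Inn{T^{n}} \leq \Aut{T^{n}} = \wAut(T^{n})$ gives $\order{\wAut(T^{n})} \geq \order{T}^{n}$; in the abelian case $T = C_{p}$ and $\wAut(T^{n}) = T^{n} \semidirect \GL_{n}(p)$ contains $\GL_{n}(p)$, whose order grows with~$p$. Since $\alpha(T) \to \infty$ as $\order{T} \to \infty$ by the defining property of~$\alpha$, for each $n \in \{1, \dots, n_{0}-1\}$ only finitely many simple groups~$T$ can satisfy $\alpha(T)^{n} < A$. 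I would then choose $B_{n}$ to strictly exceed $\order{\wAut(T^{n})}$ over this finite list of exceptional~$T$, so that the implication $\order{\wAut(T^{n})} \geq B_{n} \Rightarrow \alpha(T)^{n} \geq A$ holds. Setting $B = \max\{\, B_{n} : 1 \leq n \leq n_{0}-1 \,\}$ (with any positive value if $n_{0} = 1$) then handles all cases uniformly.

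There is no substantive obstacle here: the content has already been packaged into Lemma~\ref{lem:alpha_bound} and the standing property that $\alpha(T) \to \infty$. The proof is essentially a compactness-style case split between a large-$n$ regime, where the universal lower bound $\alpha(T) \geq C > 1$ is decisive, and a bounded-$n$ regime, where largeness of $\order{\wAut(T^{n})}$ forces $\order{T}$ to be large, which in turn forces $\alpha(T)$ to be large.
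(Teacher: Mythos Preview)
Your argument is correct. The paper's proof is organized differently: it performs a case split on the \emph{type} of~$T$ (cyclic, alternating, Lie type, sporadic) and, within each type, notes that $\alpha(T)^{n} \to \infty$ as the relevant parameters (including~$n$) tend to infinity, then takes~$B$ larger than $\order{\wAut(T^{n})}$ over the resulting finite exceptional set in each case. Your approach instead splits on the size of~$n$: you invoke Lemma~\ref{lem:alpha_bound} once to obtain the uniform bound $\alpha(T) \geq C > 1$, which dispatches all $n \geq n_{0}$ simultaneously across every simple~$T$, and then for each of the finitely many $n < n_{0}$ you appeal to the standing fact $\alpha(T) \to \infty$ to reduce to a finite list of exceptional~$T$. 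This is a cleaner packaging, since it avoids re-doing the type-by-type analysis that is already encapsulated in Lemma~\ref{lem:alpha_bound} and in the observation that $\alpha(T) \to \infty$. One small remark: your paragraph about $\order{\wAut(T^{n})} \to \infty$ as $\order{T} \to \infty$ is true but not strictly needed in your argument as written---once you know the exceptional set of~$T$ is finite, the corresponding set of values $\order{\wAut(T^{n})}$ is automatically bounded.
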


\begin{proof}
Let $A > 0$.
Let $n \geq 1$.

If $T = C_p$, then $\alpha(T)^n \to \infty$ as $n \to \infty$ or $p \to \infty$, so there exist $N_1, P > 0$ such that $\alpha(T)^n \geq A$ if $n \geq N_1$ or $p \geq P$.
Let $B_{1}$ be the maximum of~$\order{\wAut(C_{p}^{n})}$ where $n \leq N_1$ and $p \leq P$.

If $T = A_m$, then $\alpha(T)^n \to \infty$ as $n \to \infty$ or $m \to \infty$, so there exist $N_2, M > 0$ such that $\alpha(T)^n \geq A$ if $n \geq N_2$ or $m \geq M$.
Let $B_{2}$ be the maximum of~$\order{\wAut(A_{m}^{n})}$ where $n \leq N_2$ and $m \leq M$.

If $T$ is a group of Lie type of rank $r$ over $\F_q$, then $\alpha(T)^n \to \infty$ as $n \to \infty$ or $r \to \infty$ or $q \to \infty$, so there exist $N_3, Q, R > 0$ such that $\alpha(T)^n \geq A$ if $n \geq N_3$ or $q \geq Q$ or $r \geq R$.
Let $B_{3}$ be the maximum of~$\order{\wAut(T^{n})}$ where $n \leq N_3$ and $T$ is of Lie type of rank $r \leq R$ over $\F_q$ with $q \geq Q$.

If $T$ is a sporadic group, then $\alpha(T)^n \to \infty$ as $n \to \infty$, so there exists $N_4 > 0$ such that $\alpha(T)^n \geq A$ if $n \geq N_4$.
Let $B_{4}$ be the maximum of~$\order{\wAut(T^{n})}$ where $n \leq N_4$ and $T$ is sporadic.

Let $B = \max\{ B_{1}, B_{2}, B_{3}, B_{4} \} + 1$.
By construction, if $T$ is a finite simple group and $n$ is a positive integer and $\order{\wAut(T^{n})} \geq B$, then $\alpha(T)^n \geq A$, as required.
\end{proof}

We are now in a position to prove the main result of this section.

\begin{proof}[Proof of Theorem~\ref{thm:main}:]
Let $\epsilon > 0$.
By Lemma~\ref{lem:alpha_bound}, we can fix $C > 1$ such that $\alpha(T) \geq C$ for all finite simple groups~$T$.
Choose an integer~$r$ such that $C^{-(5+r)/6} < (1-C^{-1/6})\epsilon/2$ and then define $A = ( 2(15+r)/\epsilon)^{2}$.
By Lemma~\ref{lem:alpha_growth}, we can fix $c > 0$ such that, for all finite simple groups~$T$ and positive integers~$n$, if $\order{\wAut{(T^n)}} \geq c$, then $\alpha(T)^{n} \geq A$.        

Now let $d \geq 2$ and let $G$~be a finite group with a prescribed tail $\T$.
Let $N \in \T$ such that $\order{G/N} \geq c$.
If $P_d(G) = 0$, then $d < d(G)$, which, by Corollary~\ref{cor:lucchini_menegazzo} means that $d < d(G/N)$, so $P_d(G/N) = 0$, and the theorem holds.
Therefore, we may assume that $d \geq d(G)$.
As above, write $\T = (N_0, \dots, N_\ell)$ and 
\[
\{ i_{1}, \dots, i_{k} \} = \set{ i \in \{ 1, \dots, \ell \}}{\Frat{G/N_{i}} = 1},
\]
where $i_{1} < \cdots < i_{k}$, and for $j \in \{ 1, \dots, k \}$, write
\[
N_{i_{j}-1}/N_{i_{j}} = T_{j}^{n_{j}},
\]
where $T_{j}$~is simple.  
By Corollary~\ref{cor:width}, we can fix $m \leq 15$ such that $n_{m} \geq 5$ and $n_{m+j} \geq \frac{1}{3}(n_{m}+j)$ for all $j \in \{1, \dots, k-m\}$.

Fix the index $t \in \{ 0, \dots, \ell \}$ such that $N = N_{t}$.  
Then
\[
\frac{P_d(G)}{P_d(G/N)} = \prod_{i = t+1}^{\ell} \frac{P_d(G/N_{i})}{P_d(G/N_{i-1})}.
\]
If $N \leq \Frat{G}$, then Corollary~\ref{cor:prob_frattini} implies that $P_d(G) = P_d(G/N)$, and hence the theorem holds.
Therefore, we may assume that $N \not\leq \Frat{G}$.
This means that $\Frat{G/N_i} = 1$ for some $t+1 \leq i \leq \ell$, since, otherwise, $N_{i-1}/N_i \leq \Frat{G/N_i}$ for all $t+1 \leq i \leq \ell$, which inductively implies that $N = N_t \leq \Frat{G}$.
In particular, this ensures that $k > 0$ and there exists $s \in \{ 1, \dots, k \}$ such that $i_s \geq t+1$; fix $s$ minimal subject to this condition.
If $t + 1 \leq i \leq \ell$ and $\Frat{G/N_{i}} \neq 1$, then $N_{i-1}/N_{i} \leq \Frat{G/N_{i}}$ so, by Corollary~\ref{cor:prob_frattini}, $P_d(G/N_{i-1}) = P_d(G/N_{i})$. 
Therefore
\[
\frac{P_d(G)}{P_d(G/N)} = \prod_{j = s}^{k} \frac{P_d(G/N_{i_{j}})}{P_d(G/N_{i_{j}-1})}.
\]
By Lemma~\ref{lem:prob} and Theorem~\ref{thm:zeta},
\begin{align}
\frac{P_d(G)}{P_d(G/N)} \geq \prod_{j=s}^{k} \bigl( 1 - \zeta_{G/N_{i_{j}}, N_{i_{j}-1}/N_{i_{j}}}(d) \bigr) 
\geq \prod_{j=s}^{k} \bigl( 1 - \alpha(T_{j})^{-n_{j}/2} \bigr)
\geq 1 - \sum_{j=s}^{k} \alpha(T_{j})^{-n_{j}/2}, \label{eq:ProbEstimate}
\end{align}
where $\alpha$~is the function defined in Equation~\eqref{eq:f}.  

We now establish two claims about the terms in the above sum.

\paragraph{Claim~1} $\sum_{j=m+r}^{k} \alpha(T_{j})^{-n_{j}/2} < \epsilon/2$.

\medskip

We interpret the sum as~$0$ if $m+r > k$, in which case the inequality holds immediately.  
Otherwise, note that $n_{m+i} \geq \frac{1}{3}(5+i)$ for all $i \geq 1$, so, as $\alpha(T_{j}) \geq C$ for all~$j$, we compute that
\[
\sum_{\mathclap{m+r \leq j \leq k}} \; \alpha(T_{j})^{-n_{j}/2} \leq
\sum_{i=r}^{\infty} C^{-(5+i)/6} = 
\frac{C^{-(5+r)/6}}{1 - C^{-1/6}} < \frac{\epsilon}{2}
\]
by our choice of~$r$.

\paragraph{Claim~2} $\sum_{j=s}^{m+r-1} \alpha(T_{j})^{-n_{j}/2} < \epsilon/2$.

\medskip

As in the previous claim, if $m+r-1 < s$, then this sum is interpreted as~$0$.
Otherwise, let $j$~be an integer with $s \leq j < m+r$.  
Then $\Frat{G/N_{i_{j}}} = 1$ so, as we noted earlier, $G/N_{i_{j}}$~embeds in~$\wAut(N_{i_{j}-1}/N_{i_{j}})$.  
In particular,
\[
c \leq \order{G/N} = \order{G/N_{t}} \leq 
\order{G/N_{i_{s}}} \leq \order{G/N_{i_{j}}} \leq
\order{\wAut(N_{i_{j}-1}/N_{i_{j}})} = \order{\wAut(T_{j}^{n_{j}})}.
\]
Hence, our choice of~$c$ guarantees that $\alpha(T_{j})^{n_{j}} \geq A$.
Therefore,
\[
\sum_{j=s}^{m+r-1} \alpha(T_{j})^{-n_{j}/2} \leq (m+r-s) A^{-1/2} < (15+r) A^{-1/2} = \frac{\epsilon}{2}.
\]

We have established both claims, so, upon substituting into Equation~\eqref{eq:ProbEstimate}, we deduce
\[
P_d(G) \geq (1 - \epsilon) \, P_d(G/N),
\]
which completes the proof.
\end{proof}

It just remains to prove the consequences of our main result.
We focus on finite groups in this section and profinite groups in the next.
First observe that Theorem~\ref{thm:uniserial} is an immediate consequence of Theorem~\ref{thm:main}.
We now turn to Corollary~\ref{cor:uniserial} and Remark~\ref{rem:uniserial}.

\begin{corollary}
\label{cor:limit}
Let $d \geq 2$.
Let $(G_n)_n$ be a sequence of finite groups.
For each $n$, let $\T_n$ be a prescribed chief tail of $G_n$, let $N_n \in \T_n$ and let $Q_n = G_n/N_n$.
Assume that $\order{Q_n} \to \infty$ and $P_d(Q_n) \to a$ as $n \to \infty$.
Then $P_d(G_n) \to a$ as $n \to \infty$.
\end{corollary}

\begin{proof}
Let $\epsilon > 0$.
By Theorem~\ref{thm:main}, there exists $c > 0$ such that if $H$~is a finite group with a prescribed chief tail $\T$ and $N \in \T$ such that $\order{H/N} \geq c$, then $P_d(H) \geq (1 - \epsilon) \, P_d(H/N)$.  
Since $\order{Q_n} \to \infty$ as $n \to \infty$, we can fix $m$ such that $\order{Q_n} \geq c$ for all $n \geq m$.
Therefore, $(1 - \epsilon) \, P_d(Q_n) \leq P_d(G_n) \leq P_d(Q_n)$ for all $n \geq m$, which proves that $P_d(G_n)/P_d(Q_n) \to 1$ as $n \to \infty$. 
By hypothesis, $P_d(Q_n) \to a$ as $n \to \infty$, so we conclude that $P_d(G_n) \to a$ as $n \to \infty$.
\end{proof}

Corollary~\ref{cor:limit} when combined with the main theorem of \cite{LiebeckShalev95} on simple groups gives Corollary~\ref{cor:uniserial}, and when combined with the main theorem of \cite{Dixon69} on symmetric groups it gives the claim in Remark~\ref{rem:uniserial}.

\subsection{Generation of profinite groups}
\label{ss:profinite}

We now prove our results on profinite groups, beginning with the following technical tool.

\begin{lemma}
\label{lem:profinite}
Let $G$ be a profinite group with finitely many chief series.  
Then $G$ has an open normal subgroup~$N$ such that the following hold
\begin{enumerate}
\item 
every open normal subgroup of~$G$ is comparable with~$N$
\item 
every pair of open normal subgroups of~$G$ contained in~$N$ are comparable with each other.
\end{enumerate}
\end{lemma}

\begin{proof}
For a contradiction, suppose that there is no open normal subgroup satisfying (i) and~(ii) in the statement.  
Let $K$ be an open normal subgroup of $G$ and let $n$ be the number of distinct chief series of the finite group $G/K$.

First suppose that there is an open normal subgroup~$L$ of~$G$ that is incomparable with~$K$.  
For convenience, write $\overline{G} = G/(K \cap L)$, and for $X \leq G$, write $\overline{X}$ for the image of $X$ in $\overline{G}$.
Then $\overline{G}$ has at least $n$~distinct chief series containing~$\overline{K}$ as a term, and none of these contain~$\overline{L}$ since $L$ is incomparable with $K$.  
Furthermore, by refining the open normal series $\overline{G} > \overline{L} > 1$, there is at least one chief series with $\overline{L}$ as a term.  
Therefore, $\overline{G}$ has at least $n+1$~distinct chief series.

Now suppose that every open normal subgroup of~$G$ is comparable with~$K$.  
Then $K$ satisfies (i) in the statement, so, by supposition, $K$ does not satisfy (ii).
Said otherwise, there exist incomparable open normal subgroups $L_{1}$ and~$L_{2}$ contained in~$K$.  
For convenience, write $\overline{G} = G/(L_{1} \cap L_{2})$, and for $X \leq G$, write $\overline{X}$ for the image of $X$ in $\overline{G}$.
For $i \in \{1, 2\}$, there are at least $n$~distinct chief series of~$\overline{G}$ arising by refining the open normal series $\overline{G} > \overline{K} > \overline{L_{i}} > 1$.
Since $L_{1}$ and~$L_{2}$ are incomparable, we conclude that $\overline{G}$ has at least $2n$~distinct chief series.

Consequently, by repeated application of this argument, there exist finite quotients of~$G$ with a arbitrarily many distinct chief series, which contradicts the hypothesis that $G$~has finitely many chief series.
\end{proof}

\begin{proof}[Proof of Theorem~\ref{thm:profinite}:]
Let $G$ be a profinite group with finitely many chief series.
By Lemma~\ref{lem:profinite}, we may fix an open normal subgroup~$N$ of~$G$ satisfying the two conditions of that lemma.
Let $G = G_0 > G_1 > \cdots$ be a chief series of $G$ refining the open normal series $G > N > 1$ and fix~$r$ such that $G_{r} = N$.

We first prove that $d(G) \leq \max\{ 2, d(G/N) \}$.
Let $K$ be an open normal subgroup of~$G$.
If $N \leq K$, then immediately $d(G/K) \leq d(G/N)$.
Otherwise, $K \leq N$ and $K = G_{m}$ for some $m \geq r$.  
The properties of~$N$ ensure that $G_{r}/K = N/K$ is the first term of a prescribed chief tail $(G_{r}/K, \dots, G_{m}/K)$ for~$G/K$, so Corollary~\ref{cor:lucchini_menegazzo} implies that $d(G/K) = \max \{ 2, d(G/N) \}$.
Therefore, in all cases, $d(G/K) \leq \max\{2, d(G/N)\}$.
Since $d(G) = \sup\{ d(G/K) \mid \text{$K \normal_{\rm o} G$} \}$, we conclude that $d(G) \leq \max\{2, d(G/N)\}$, as required.

Let $d \geq d(G)$.
We will now prove that $P_d(G) > 0$.
First assume that $d(G) = 1$.
Then $G$~is procyclic, so it is the Cartesian product of its Sylow pro-$p$ subgroups, each of which is either finite cyclic or isomorphic to the additive group~$\padics$ of $p$\nbd adic integers.  
If the Sylow pro-$p$ subgroup of~$G$ is nontrivial for infinitely many
primes~$p$, then $G$~has infinitely many normal series of the form $G >
G^{p} > 1$ and hence infinitely many chief series.  
If there are distinct primes $p$~and~$q$ such that the Sylow pro\nbd$p$ subgroup is
isomorphic to~$\padics$ and the Sylow pro\nbd$q$ subgroup is
nontrivial, then we can construct infinitely many chief series that
begin in the following way
\[
G > G^{p} > G^{p^{2}} > \dots > G^{p^{r}} > G^{p^{r}q} > \cdots.
\]
Hence, in order to have finitely many chief series, $G \cong \padics$
for some prime~$p$.  
Then any element not in the unique maximal normal subgroup of~$G$ is a topological generator and therefore $P_{d}(G) = \bigl(1 - \tfrac{1}{p}\bigr)^{d} > 0$.

It remains to assume that $d(G) \geq 2$.
By Theorem~\ref{thm:main}, there exists $c > 0$ such that if $H$~is a finite group with a prescribed chief tail~$\T$ and $L \in \T$ such that $\order{H/L} \geq c$, then $P_d(H) \geq \half P_d(H/L)$.  
Fix~$m$ such that $G_m \leq N$ and $\order{G/G_m} \geq c$.
Let $K$ be an open normal subgroup of~$G$.
If $G_m \leq K$, then immediately $P_d(G/K) \geq P_d(G/G_m)$.
Otherwise, since every open normal subgroup of $G$ is comparable with $N$ and every pair of open normal subgroups of $G$ contained in $N$ are comparable with each other, $K = G_n$ for some $n \geq m$.
In particular, $G_n \leq N$, so $(N/G_n, \dots, G_n/G_n)$ is a prescribed chief tail of~$G/G_n$.
By construction, $G_m/G_n$ is a term of this prescribed chief tail, $G/G_m = (G/G_n)/(G_m/G_n)$ and $\order{G/G_m} \geq c$.
Hence, Theorem~\ref{thm:main} gives $P_d(G/G_n) \geq \half P_d(G/G_m)$.
Therefore, in all cases, $P_d(G/G_n) \geq \half P_d(G/G_m)$.
Since $P_d(G) = \inf\{ P_d(G/K) \mid \text{$K \normal_{\rm o} G$} \}$ (see \cite[Theorem~1]{Mann96}), we conclude that $P_d(G) \geq \half P_d(G/G_m) > 0$, noting that $G/G_m$ is a $d$-generated finite group.
\end{proof}

\subsection{Examples and constructions of groups with prescribed chief tail}
\label{ss:examples}

We finish by presenting examples and constructions of finite groups with prescribed chief tail.  
These, in particular, justify the claims in Example~\ref{ex:main}.
Indeed, if one applies them to a uniserial group (for example, a finite simple group) then the result will be a uniserial group.
We begin with affine groups.

\begin{lemma}
\label{lem:affine_construction}
Let $G$~be a finite group and let $V$~be uniserial $\F_{p}G$\nbd module for some prime~$p$ with composition series $V = V_{0} > V_{1} > \dots > V_{k} = 0$.
Then $(V_{0},\dots,V_{k})$~is a prescribed chief tail of the affine group~$V \semidirect G$ if and only if $G$~acts faithfully on $V/V_{1}$.  
Moreover, if $G$~acts faithfully on $V/V_{1}$ and $(N_{0},\dots,N_{\ell})$~is a prescribed chief tail of~$G$, then $(V \semidirect N_{0}, \dots, V \semidirect N_{\ell}, V_{1}, \dots, V_{k})$ is a prescribed chief tail of $V \semidirect G$.
\end{lemma}

\begin{proof}
Certainly $V_{1} > V_{2} > \dots > V_{k}$ is a chain of normal subgroups of the semidirect product~$H = V \semidirect G$.
Suppose first that $G$~acts faithfully on~$V/V_{1}$.  
Let $N$~be any normal subgroup of the semidirect product~$H$.  
If $N \leq V$, then $N$~is a submodule of~$V$, so $N = V_{j}$ for some~$j$.  
Assume then that $N \nleq V$.  
Let $g$~be the image in~$G$ of some element $x \in N \setminus V$ and observe that the commutator of~$x$ with $v \in V$ is equal to $[v,x] = v^{g}-v$.
Since $g$~induces a nontrivial map on~$V/V_{1}$, we deduce that $[V,N] \nleq V_{1}$.  
Hence, the normal subgroup~$V \cap N$ is not contained in~$V_{1}$ and we deduce $V \leq N$.  
This establishes that $(V_{0},\dots,V_{k})$ is a prescribed chief tail for~$H$.

Suppose instead that $G$~does not act faithfully on~$V/V_{1}$.  
Let $K$~be the (nontrivial) kernel of the action on this quotient. 
A straightforward calculation then verifies that $V_{1} \semidirect K$ is a normal subgroup of $H$.  
Refining $H > V_{1} \semidirect K > 1$ to a chief series for~$G$ then produces a series that does not contain $V = V_{0}$ as one of its terms.  
Hence, $(V_{0},\dots,V_{k})$~is not a prescribed chief tail for~$H$.

The last assertion in the statement follows immediately by the Correspondence Theorem.
\end{proof}

\begin{example}
\label{ex:affine_symmetric}
Let $V = \F_{p}^n$ be the permutation module for the natural action of the symmetric group $S_n$ on $n \geq 5$ points.  
A straightforward calculation shows that the only proper nonzero submodules of $V$~are
\[
V_{1} = \biggl\{ \, (x_{1},x_{2},\dots,x_{n}) \in V \;\biggm|\; \sum_{i=1}^{n} x_{i} = 0 \, \biggr\}
\AND
V_{2} = \set{ (x,x,\dots,x) }{ x \in \F_{p} }.
\]

Suppose first that $p$ does not divide $n$.
Then $V_{1} \cap V_{2} = 0$, so $V = V_{1} \oplus V_{2}$, which means that $V$ is not a uniserial module and $V \semidirect S_{n}$ is not a uniserial group.  
Indeed its only prescribed chief tail is the trivial one~$(1)$.

Suppose instead that $p$ divides $n$.
Then $V_{2} \leq V_{1}$, so $V$ is a uniserial module with composition series $V > V_{1} > V_{2} > 0$.
In this case, $S_{n}$~does not act faithfully on~$V/V_{1}$, so $V \semidirect S_{n}$ is not uniserial.  
However, $S_{n}$~does act faithfully on~$V_{1}/V_{2}$, by our assumption on~$n$, and hence $V_{1} \semidirect S_{n}$ is uniserial.
\end{example}

\begin{example}
\label{ex:affine_equality}
Let $p$ be a prime satisfying $p \equiv \pm 1 \pmod{8}$.
Viewing $\F_p^4$ as a tensor product $\F_p^2 \otimes \F_p^2$ yields the embedding $(\GL_2(p) \circ \GL_2(p)) \semidirect S_2 \leq \GL_4(p)$. 
The congruence condition on~$p$ ensures that $\GL_2(p)$ has a subgroup in the Aschbacher class $\C_6$ that is a nonsplit extension $2^{1+2}_{-}.S_3$, which can also be viewed as a central extension $2.S_4$ (see \cite[Table~8.1]{BrayHoltRoneyDougal13}).
This gives a subgroup $2.(S_4 \wr S_2)$ of $(\GL_2(p) \circ \GL_2(p)) \semidirect S_2$ and, hence, of $\GL_4(p)$. 
Since $2.A_4$ is an irreducible subgroup of $\GL_2(p)$, the index two subgroup $H = 2.(A_4^2 \semidirect C_4)$ is an irreducible subgroup of $\GL_4(p)$.
One can easily check directly that $H$ is uniserial with chief factors
\[
C_2 \quad (C_2) \quad C_3^2 \quad C_2^4 \quad (C_2),
\]
where the Frattini chief factors in brackets.
Therefore, by Lemma~\ref{lem:affine_construction}, the affine group $G = p^4 \semidirect H$ is uniserial with chief factors
\[
C_2 \quad (C_2) \quad C_3^2 \quad C_2^4 \quad (C_2) \quad C_p^4,
\]
where, again, the Frattini chief factors in brackets.
\end{example}

We now turn to wreath products, with the following lemma presenting the simplest case.

\begin{lemma}
\label{lem:wreath_construction}
Let $G$~be a uniserial finite group, and let $G$ act faithfully and transitively on a finite set~$\Omega$.
Let $T$~be a nonabelian finite simple group.
Then the base group~$N$ of the wreath product $W = T \wr_\Omega G$ is its unique minimal normal subgroup of~$W$.  
If $(N_{0},\dots,N_{\ell})$ is a prescribed chief tail of~$G$, then $(N \semidirect N_{0}, \dots, N \semidirect N_{\ell}, 1)$ is a prescribed chief tail of~$W$.  
In particular, if $G$~is uniserial then $W$~is uniserial.
\end{lemma}

\begin{proof}
Write $W = T \wr_\Omega G$ and let $N = T^n$ be the base group $W$, where $n = \order{\Omega}$.
Since $G$ acts faithfully and transitively on $\Omega$, the conjugation action of $W$ on the set of $n$ simple direct factors of $N$ is transitive and has kernel $N$. 
Since $W$ acts transitively on the simple direct factors of $N$ and $T$ is a nonabelian simple group, $N$ is a minimal normal subgroup of $W$.
Let $M$~be any normal subgroup of~$W$ and suppose $M \nleq N$.  
Then the image of~$M$ in~$G$ is nontrivial, so $[M,N] \neq 1$ as $G$~acts faithfully on~$\Omega$.  
Hence, $M \cap N \neq 1$, so $N \leq M$ by the minimality of~$N$.
This establishes that $N$~is the unique minimal normal subgroup of~$W$ and the remaining assertions follow immediately.
\end{proof}

The following example gives a construction of a uniserial group that is a wreath product that does not arise from the previous lemma.

\begin{example}
  \label{ex:wreath_quasisimple}
  Let $T$~be a quasisimple finite group whose centre $\Centre{T}$~is cyclic
  of prime order~$p$.  Let $G$~be a uniserial group, say with chief
  series $G = G_{0} > G_{1} > \dots > G_{\ell} = 1$, and suppose that
  $G$~acts faithfully and transitively on the finite set $\Omega =
  \{1,\dots,n\}$.  Then the wreath product $T \wr_{\Omega} G$
  contains an elementary abelian $p$\nbd subgroup that is central in
  its base group and that, when viewed as a $\F_{p}G$\nbd module, is
  isomorphic to the permutation module~$\F_{p}^{n}$.  Let $V$~be a
  uniserial quotient of the module~$\F_{p}^{n}$.  There is therefore a
  quotient~$H$ of $T \wr_{\Omega} G$ that is the semidirect product of
  a normal subgroup~$M$ by~$G$ where $M$~is perfect, $M/\Centre{M}
  \cong (T/\Centre{T})^{n}$ and $\Centre{M}$~is isomorphic to~$V$ as a
  $\F_{p}G$\nbd module.  Let us identify~$\Centre{M}$ with~$V$.  This
  group~$H$ then has a normal series
  \[
  H = M \semidirect G_{0} > M \semidirect G_{1} > \dots > M
  \semidirect G_{\ell} = M > \Centre{M} = V = V_{0} > V_{1} > \dots >
  V_{k} = 1
  \]
  where $V_{0} > V_{1} > \dots > V_{k}$ is the unique composition series of the module~$V$.
  
  We claim that $H$ is uniserial.
  To prove this, let $N$ be a normal subgroup of~$H$.  
  If $N \leq \Centre{M}$,
  then it is an $\F_{p}G$\nbd submodule of~$V$, so $N = V_{j}$ for
  some~$j$.  Suppose instead that $N \nleq \Centre{M}$.  Since
  $T/\Centre{T}$~is a nonabelian simple group and $G$~acts faithfully
  on~$\Omega$, it follows that $N$~acts nontrivially on the quotient
  $M/\Centre{M} \cong (T/\Centre{T})^{n}$.  Therefore $[M,N] \nleq
  \Centre{M}$ and we deduce that $(M \cap N) \Centre{M} = M$.  As
  $M$~is perfect, $M = M' = \bigl( (M \cap N) \Centre{M} \bigr)' = (M \cap
  N)' \leq N$.  Now the fact that $G$~is uniserial ensures that $N = M
  \semidirect G_{i}$ for some~$i$.  In conclusion, this shows that the
  constructed quotient~$H$ is a uniserial group.

  As a specific example, let $q$ be a prime power, let $d \geq 2$, let $p$ be a prime divisor of $\gcd(d, q-1)$ and let $T$ be the quotient of $\SL_d(q)$ by the central subgroup of order $\gcd(d, q-1)/p$.
  Then $T$~is a quasisimple group with $\Centre{T} \cong C_{p}$. 
  If $n \geq 5$ is divisible by~$p$, then the permutation module $V = \F_{p}^{n}$ for the symmetric group~$S_{n}$ is uniserial and we deduce that $T \wr S_{n}$ is uniserial with chief series
  \[
  T \wr S_{n} > T \wr A_{n} > T^{n} > \Centre{T}^{n} = V > V_{1} > V_{2} > 1
  \]
  where $V_{1}$~and~$V_{2}$ are the submodules given in Example~\ref{ex:affine_symmetric}.
\end{example}

\begin{paragraph}{Acknowledgements}
The first author is an EPSRC Postdoctoral Fellow (EP/X011879/2). 
The authors thank Chris Parker for drawing their attention to this question, Andrea Lucchini for helpful suggestions on how the work in a previous version of this paper could be set in a more general context and the anonymous referee for their careful reading of the paper.
In order to meet institutional and research funder open access requirements, any accepted manuscript arising shall be open access under a Creative Commons Attribution (CC BY) reuse licence with zero embargo.
No new data were created. 
\end{paragraph}

\begin{multicols}2
\noindent Scott Harper \newline
School of Mathematics \newline
University of Birmingham \newline
Birmingham, B15 2TT, UK \newline
\texttt{s.harper.3@bham.ac.uk}

\noindent Martyn Quick \newline
School of Mathematics and Statistics \newline
University of St Andrews \newline
St Andrews, KY16 9SS, UK \newline
\texttt{mq3@st-andrews.ac.uk}
\end{multicols}

\end{document}